\newtheorem{theorem}[subsection]{Theorem}
\newtheorem{proposition}[subsection]{Proposition}
\newtheorem{corollary}[subsection]{Corollary}
\newtheorem{lemma}[subsection]{Lemma}
\theoremstyle{definition}
\newtheorem{remark}[subsection]{Remark}
\numberwithin{equation}{subsection}
\newlength{\dhatheight}
\newcommand{\swdhat}[1]{
    \settoheight{\dhatheight}{\ensuremath{\hat{#1}}}
    \addtolength{\dhatheight}{-0.70ex}
    \widehat{\vphantom{\rule{1pt}{\dhatheight}}
    \smash{\widehat{#1}}}}
\newcommand{\wdhat}[1]{
    \settoheight{\dhatheight}{\ensuremath{\widehat{#1}}}
    \addtolength{\dhatheight}{-0.35ex}
    \widehat{\vphantom{\rule{1pt}{\dhatheight}}
    \smash{\widehat{#1}}}}
\newcommand{\m}{\mathfrak m}
\newcommand{\bp}{\bigoplus}
\newcommand{\bt}{\bigotimes}
\newcommand{\ot}{\otimes}
\newcommand{\ti}{\times}
\newcommand{\op}{\oplus}
\newcommand{\ol}{\overline}
\newcommand{\gal}{\mathrm{Gal}}
\newcommand{\csk}{\m_K/\m_K^2}
\newcommand{\csl}{\m_L/\m_L^2}
\newcommand{\ra}{\rightarrow}
\newcommand{\xra}{\xrightarrow}
\newcommand{\lra}{\longrightarrow}
\newcommand{\ira}{\hookrightarrow}
\newcommand{\sra}{\twoheadrightarrow}
\newcommand{\ma}{\mapsto}
\newcommand{\sube}{\subseteq}
\newcommand{\sub}{\subset}
\newcommand{\dr}{\mathrm d}
\newcommand{\skl}{S_{K,L}}
\newcommand{\slk}{S_{L/K}}
\newcommand{\f}{\frac}
\newcommand{\lt}{\left}
\newcommand{\rt}{\right}
\newcommand{\OK}{\mathcal{O}_K}
\newcommand{\OL}{\mathcal{O}_L}
\newcommand{\cO}{\mathcal{O}}
\newcommand{\isora}{\xra{\sim}}
\newcommand{\gk}{G_K}
\newcommand{\gkl}{G_{K,\log}}
\newcommand{\red}{\mathrm{red}}
\newcommand{\grl}{\mathrm{Gr}^r_{\log}{G_K}}
\newcommand{\thlogr}{\Th^{(r)}_{\ol F,\log}}
\newcommand{\thlogc}{\Th^{(c)}_{\ol F,\log}}
\newcommand{\thnonc}{\Th^{(c)}_{\ol F}}
\newcommand{\thnonr}{\Th^{(r)}_{\ol F}}
\newcommand{\xic}{\Xi^{(c)}_{\ol F}}
\newcommand{\xir}{\Xi^{(r)}_{\ol F}}
\newcommand{\ab}{\mathrm{ab}}
\newcommand{\alg}{\mathrm{alg}}
\newcommand{\plim}{\varprojlim}
\newcommand{\bff}{\mathbf{f}}
\newcommand{\cA}{\mathcal{A}}
\newcommand{\what}{\widehat}
\newcommand{\fek}{\text{F\'E}_{/K}}
\newcommand{\cR}{\mathcal{R}}
\newcommand{\afs}{\mathrm{AFS}_{\cO_{K}}}
\newcommand{\bmg}{\mathbbm{g}}
\newcommand{\bfg}{\mathbf{g}}
\newcommand{\id}{\mathrm{id}}
\newcommand{\beq}{\begin{equation}}
\newcommand{\eeq}{\end{equation}}
\newcommand{\kcc}{\mathrm{KCC}}
\newcommand{\cc}{\mathrm{CC}}
\DeclareMathOperator{\gr}{Gr}
\DeclareMathOperator{\spec}{Spec}
\DeclareMathOperator{\Sp}{Sp}
\DeclareMathOperator{\Tr}{Tr}
\DeclareMathOperator{\Hom}{Hom}
\DeclareMathOperator{\rsw}{rsw}
\DeclareMathOperator{\sw}{sw}
\DeclareMathOperator{\ind}{Ind}
\DeclareMathOperator{\res}{Res}
\DeclareMathOperator{\ord}{ord}
\DeclareMathOperator{\tr}{tr}
\DeclareMathOperator{\aut}{Aut}
\DeclareMathOperator{\spf}{Spf}
\DeclareMathOperator{\sym}{Sym}
\DeclareMathOperator{\dimtot}{dimtot}
\DeclareMathOperator{\rank}{rank}
\def\wt{\widetilde}
\def\bmf{\mathbbm{f}}
\def\sC{\mathscr{C}}
\def\fp{\mathfrak{p}}
\def\fX{\mathfrak{X}}
\def\sO{\mathscr{O}}
\def\sF{\mathscr{F}}
\def\^{\wedge}
\def\hO{\what {\Omega}^1}
\def\le{\leqslant}
\def\di{\mathfrak{D}}
\def\A{\mathbb{A}}
\def\Z{\mathbb Z}
\def\F{\mathbb F}
\def\Q{\mathbb Q}
\def\<{\langle}
\def\>{\rangle}
\def\a{\alpha}
\def\b{\beta}
\def\c{\chi}
\def\g{\gamma}
\def\d{\delta}
\def\D{\Delta}
\def\l{\lambda}
\def\L{\Lambda}
\def\ve{\varepsilon}
\def\o{\omega}
\def\O{\Omega}
\def\p{\pi}
\def\r{\rho}
\def\s{\sigma}
\def\t{\tau}
\def\th{\theta}
\def\Th{\Theta}
\def\k{\kappa}
\def\l{\lambda}
\def\v{\vartheta}
\def\x{\xi}
\begin{document}

\title{Ramification and nearby cycles for $\ell$-adic sheaves on relative curves}
\author{Haoyu Hu}
\address{Haoyu Hu, IHES, Le Brois-Marie, 35 Rue de Chartres, 91440 Bures-sur-Yvette, France.}

\email{ haoyu@ihes.fr \& huhaoyu@mail.nankai.edu.cn}
\begin{abstract}
Deligne and Kato proved a formula computing the dimension of the nearby cycles complex of an $\ell$-adic sheaf on a relative curve over an excellent strictly henselian trait. In this article, we reprove this formula using Abbes-Saito's ramification theory.
\end{abstract}

\maketitle

\tableofcontents

\section{Introduction}
\subsection{}\label{set intro}
Let $R$ be an excellent strictly henselian discrete valuation ring of residue characteristic $p>0$, $S=\spec(R)$, $s$ (resp. $\eta$, resp. $\bar\eta$) the closed point (resp. the generic point, resp. a geometric generic point) of $S$. Let $\mathfrak{X}$ be a smooth relative curve over $S$, $x$ a closed point of the special fiber $\fX_s$, $X$ the strict henselization of $\fX$ at $x$, $U$ a non-empty open sub-scheme of $X_{\eta}$, and $u:U\ra X_{\eta}$ the canonical injection. Let $\L$ a finite field of characteristic $\ell\neq p$, and $\sF$ a locally constant constructible \'etale sheaf of $\L$-module on $U$. The spaces of nearby cycles of $\sF$
\begin{equation*}
\Psi^i_x(u_!\sF)=\mathrm{H}^i_{\text{\'et}}(X_{\bar\eta},u_!\sF)\ \ \ (i\geqslant 0)
\end{equation*}
vanish when $i\geqslant 2$ (\cite{sga7ii} XIII, \cite{fu} 9.2.2) and the dimension of $\Psi^0_x(u_!\sF)$ is easy to compute. The aim of this article is to reprove a Deligne-Kato's formula that computes the dimension of $\Psi_x^1(u_!\sF)$ \cite{lau,kato vc,kato scdv} using Abbes-Saito's ramification theory \cite{as i,as ii}.

\subsection{}
Let $\fp$ be the generic point of the special fiber $X_s$. We denote by $\k(\fp)$ the residue field of $\fp$, which is the fraction field of a strictly henselian discrete valuation ring. Assume first that $\sF$ can be extended to a locally constant constructible sheaf
$\wt \sF$ on an open sub-scheme $\wt U$ of $X$ containing $\fp$. Then Deligne computes the dimension of $\Psi^1_x(u_!\sF)$. Let $\sw_{\fp}(\wt{\sF})$ be the Swan conductor of the pull-back of $\wt\sF$ on $\spec(\k(\fp))$ and let
\begin{equation*}
\varphi(s)=\sw_{\fp}(\wt\sF)+\rank(\sF).
\end{equation*}
On the other hand, for any $t\in X_{\bar\eta}-U_{\bar\eta}$, let $\sw_{t}(\sF)$ be the Swan conductor of the pull-back of $\sF$ on $\spec(\cO_{X_{\bar\eta},t})\ti_XU$, and let
\begin{equation*}
\varphi(\eta)=\sum_{t\in X_{\bar\eta}-U_{\bar\eta}}(\sw_t(\sF)+\rank(\sF)).
\end{equation*}
Then, Deligne's formula is (\cite{lau} 5.1.1)
\begin{equation}\label{deligne}
\dim_{\L}\Psi^0_x(u_!\sF)-\dim_{\L}\Psi^1_x(u_!\sF)=\varphi(s)-\varphi(\eta).
\end{equation}
\subsection{}\label{kato mtds}
Kato generalized Deligne's formula for any $\sF$. His formula has the same form as \eqref{deligne}. The definition of the invariant $\varphi(\eta)$ is the same as above, but $\varphi(s)$ cannot be defined by the same method. He provided two definitions of $\varphi(s)$. The first one uses a ramification theory for valuation rings of rank two, that he developed for this purpose \cite{kato vc}. The second one uses his notion of Swan conductors with differential values \cite{kato scdv}. Both methods rely on Epp's partial semi-stable reduction theorem \cite{epp}. In this article, we define the invariant $\varphi(s)$ in terms of ramification theory of Abbes and Saito \cite{as i,as ii}. The case when $\sF$ has rank $1$ is due to Abbes and Saito (\cite{as ft} Appendix A).

\subsection{}
Let $K$ be a complete discrete valuation field, $\OK$ its integer ring, $\m_K$ the maximal ideal of $\OK$ and $F$ the residue field of $\OK$. We assume that $F$ is of finite type over a perfect field $F_0$ of characteristic $p$. We denote by $\ol K$ a separable closure of $K$, by $\cO_{\ol K}$ the integral closure of $\OK$ in $\ol K$, by $\ol F$ the residue field of $\cO_{\ol K}$, by $v$ the valuation of $\ol K$ normalized by $v(K^{\ti})=\Z$ and by $G_K$ the Galois group of $\ol K/K$. Abbes and Saito defined a decreasing filtration $G^r_{K,\log}$ ($r\in \Q_{\geqslant 0}$) of $G_K$, called the logarithmic ramification filtration. For any rational number $r\geqslant0$, we put $G_{K,\log}^{r+}=\ol{\bigcup_{b>r}G^b_{K,\log}}$. Then $P=G_{K,\log}^{0+}$ is the wild inertia subgroup of $G_K$ (\cite{as i} 3.15). For any rational number $r>0$, the graded piece
\begin{equation*}
\gr^r_{\log}G_K=G_{K,\log}^r\big/G^{r+}_{K,\log}
\end{equation*}
is abelian and killed by $p$ (\cite{saito cc} 1.24, \cite{as iii} Th. 2).

For any $r\in\Q$, we denote by $\m^r_{\ol K}$ (resp. $\m^{r+}_{\ol K}$) the set of elements of $\ol K$ such that $v(x)\geqslant r$ (resp. $v(x)>r$). Let $\O^1_F(\log)$ be the $F$-vector space
\begin{equation*}
\O^1_F(\log)=(\O^1_{F/F_0}\op(F\ot_{\Z} K^{\ti}))/(\dr \bar a-\bar a\ot a;\;a\in\OK^{\ti}),
\end{equation*}
where $\bar a$ is the residue class of $a$ in $F$. We have a canonical exact sequence of finite dimensional $F$-vector spaces
\begin{equation*}
0\ra\O^1_F\ra \O^1_F(\log)\ra F\ra 0.
\end{equation*}
For any rational number $r>0$, there exists a canonical injective homomorphism (\cite{saito cc} 1.24, \cite{as iii} Th. 2), called the {\em refined Swan conductor},
\begin{equation*}
\rsw:\Hom_{\F_p}(\gr^r_{\log}G_K,\F_p)\ra \O^1_F(\log)\ot_F\m^{-r}_{\ol K}/\m^{-r+}_{\ol K}.
\end{equation*}

Let $M$ be a finite dimensional ${\L}$-vector space on which $P$ acts through a finite discrete quotient,
\begin{equation*}
M=\op_{r\in\Q_{\geqslant 0}} M^{(r)}
\end{equation*}
the slope decomposition of $M$ (cf. \ref{slope decom lemma}), and for any rational number $r>0$,
\begin{equation*}
M^{(r)}=\op_{\chi}M^{(r)}_{\chi}
\end{equation*}
the central character decomposition of $M^{(r)}$, where the sum runs over finitely many characters $\chi:\gr^r_{\log}G_K\ra {\L}_{\chi}^{\ti}$ such that ${\L}_{\chi}$ is a finite extension of ${\L}$ (cf. \ref{center char decomp}). Enlarging ${\L}$, we may assume that for all rational number $r>0$ and for all central characters $\chi$ of $M^{(r)}$, ${\L}={\L}_{\chi}$. We fix a non-trivial character $\psi_0:\F_p\ra {\L}^{\ti}$. Since $\gr^r_{\log}G_K$ is abelian and killed by $p$, $\chi$ factors uniquely through $\gr^r_{\log}G_K\ra\F_p\xra {\psi_0} {\L}^{\ti}$. We denote abusively by $\c:\gr^r_{\log}G_K\ra\F_p$ the induced character. We fix a uniformizer $\p$ of $\OK$. We define {\em Abbes-Saito's characteristic cycle} of $M$ and denote by $\cc_{\psi_0}(M)$ the following section \eqref{cc formula}
\begin{equation*}
\cc_{\psi_0}(M)=\bt_{r\in\Q_{> 0}}\bt_{\c\in X(r)}(\rsw(\c)\ot\p^{r})^{\dim_{\L} M^{(r)}_{\c}}\in (\O^1_F(\log)\ot_{F}\ol F)^{\ot \dim_{A}M/M^{(0)}}.
\end{equation*}

\subsection{}\label{cc=kcc intro}
In the following, we assume that $p$ is not a uniformizer of $K$ (i.e. either $K$ has characteristic $p$ or $K$ has characteristic zero and $p$ is not a uniformizer of $\OK$). Let $L$ be a finite Galois extension of $K$ of Group $G$. We assume that $L/K$ has ramification index one and that the residue field extension is non-trivial, purely inseparable and monogenic~; we say that the extension $L/K$ is of type (II) (c.f. \ref{type}). Let $M$ be a finite ${\L}$-vector space on which $G_K$ acts through $G$. We prove that, for any rational number $r>0$, and any central character $\chi:\gr^r_{\log}G_K\ra \F_p$ of $M^{(r)}$, we have (\ref{thlog factor xi})
\begin{equation*}
\rsw(\c)\in \O^1_F\ot_{F}\m^{-r}_{\ol K}/\m^{-r+}_{\ol K}.
\end{equation*}
Hence, we have $\cc_{\psi_0}(M)\in (\O^1_F\ot_F\ol F)^{\ot m}$, where $m=\dim_AM/M^{(0)}$ (\ref{cc in O}). On the other hand, using Kato's theory of Swan conductors with differential values, we can define {\em Kato's characteristic cycle} $\kcc_{\psi_0(1)}(M)$ \eqref{kcc formula}. Our main result \eqref{general equal} is the following equality
\begin{equation}\label{general equal intro}
\cc_{\psi_0}(M)=\kcc_{\psi_0(1)}(M).
\end{equation}
Using Kato's theory, we deduce a Hasse-Arf type theorem (\ref{hasse arf cc})
\begin{equation*}
\cc_{\psi_0}(M)\in (\O^1_F)^m\sub(\O^1_F\ot_F\ol F)^m,
\end{equation*}
and an induction formula \eqref{ind for cc} for Abbes-Saito's characteristic cycle.
\subsection{}
Under the assumptions of \ref{set intro}, we can now give the new definition of $\varphi(s)$. Firstly, by Epp's results \cite{epp}, we can reduce to the case where $\sF$ is trivialized by a Galois \'etale connected covering $U'$ of $U$ such that the special fiber of the normalization $X'$ of $X$ in $U'$ is reduced. We denote by $\what\cO_{X,\fp}$ the completion of $\cO_{X,\fp}$, by $K_{\fp}$ the fraction field of $\what\cO_{X,\fp}$ and by $\sF_{\fp}$ the representation of $\gal(K^{\mathrm{sep}}_{\fp}/K_{\fp})$ corresponding to the pull-back of $\sF$ on $\spec(\what\cO_{X,\fp})\ti_XU$. The latter factors through the Galois group of a finite Galois extension $L_{\fp}$ of $K_{\fp}$, which is of type (II) over an unramified extension of $K_{\fp}$. We fix a uniformizer $\p$ of $R$ and a non-trivial character $\psi_0:\F_p\ra\L^{\ti}$. We still have $\cc_{\psi_0}(\sF_{\fp})\in (\O^1_{\k(\fp)})^{\ot m}$ (cf. \ref{remark general equal}). We denote by $\ord_{\fp}$ the valuation of $\kappa(\fp)$ normalized by $\ord_{\fp}(\kappa(\fp)^{\ti})=\Z$ and abusively by $\ord_{\fp}:\O^1_{\kappa(\fp)}-\{0\}\ra \Z$ the map defined by $\ord_{\fp}(\a\dr\b)=\ord_{\fp}(\a)$, if $\a,\b\in\kappa(\fp)^{\ti}$ and $\ord_{\fp}(\b)=1$. The latter can be uniquely extended to $(\O^1_{\k(\fp)})^{\ot r}-\{0\}$ for any integer $r\geqslant 1$. We denote by $\ol{\sF}_{\fp}$ the restriction to $\spec(\kappa(\fp))$ of the direct image of $\sF_{\fp}$ by the map $\spec (K_{\fp})\ra \spec(\what {\cO}_{X,\fp})$. It  corresponds to a representation of $\gal(\ol{\k(\fp)}/\k(\fp))$. The invariant $\varphi(s)$ is defined by
\begin{equation}\label{phi s}
\varphi(s)=-\ord_{\fp}(\cc_{\psi_0}(\sF_{\fp}))+\sw_s(\ol{\sF}_{\fp})+\rank(\ol{\sF}_{\fp}).
\end{equation}
In fact, Kato's second definition of $\varphi(s)$ (\cite{kato scdv} 4.4) is obtained by replacing $\cc_{\psi_0}(\sF_{\fp})$ by $\kcc_{\psi_0(1)}(\sF_{\fp})$ in \eqref{phi s}. Hence, from \eqref{general equal intro}, we deduce that Deligne-Kato's formula \eqref{deligne} holds true with our definition (cf. \ref{deligne kato}).

\subsection{}
Deligne-Kato's formula has already had important applications. For instance, Deligne's formula could be used in Laumon's work on local Fourier transform (\cite{lautf} 2.4.3) and Kato's formula was recently used in the work of Obus and Wewers on local lifting problem \cite{ow}. We would like to mention that Laumon's formula of the rank of the local Fourier transform is a direct application of the formulation of Deligne-Kato's formula using \eqref{phi s}. Indeed, it was reproved in (\cite{as ft} Appendix B) by reducing to the rank 1 case by Brauer theorem.

\subsection{}
This article is organized as follows. We briefly introduce Kato's swan conductor with differential values and Abbes-Saito's ramification theory in $\S$3 and $\S$4, respectively. We study in $\S$5 the ramification of extensions of type (II). We recall tubular neighborhoods and normalized integral models in $\S$6. We study the isogeny associated to an extension of type (II) in $\S$7 in the equal character case and in $\S$8 in the unequal characteristic case. Using the results of these two sections, we prove the main theorem \ref{theorem rsw} in $\S$9. In $\S$10, the heart of this article, we compare Kato's characteristic cycle and Abbes-Saito's characteristic cycle. The last section is devoted to Deligne-Kato's formula by using Abbes-Saito's characteristic cycle.

\subsection*{Acknowledgement}
This article is a part of the author's thesis at Universit\'e Paris-Sud and Nankai University. The author would like to express his deepest gratitude to his supervisors Ahmed Abbes and Lei Fu for leading him to this area and for patiently guiding him in solving this problem. The author is also grateful to Fonds Chern and Fondation Math\'ematiques Jacques Hadamard for their support during his stay in France.

\section{Notation}
\subsection{}\label{basic notes}
In this article, $K$ denotes a complete discrete valuation field, $\OK$ its integer ring, $\m_K$ the maximal ideal of $\OK$ and $F$ the residue field of $\OK$. We assume that the characteristic of $F$ is $p>0$. We fix a uniformizer $\p$ of $\OK$. Let $\ol K$ be a separable closure of $K$, $G_K$ the Galois group of $\ol K$ over $K$, $\cO_{\ol K}$  the integral closure of $\OK$ in $\ol K$, $\ol F$ the residue field of $\cO_{\ol K}$ and $v$ the valuation of $\ol K$ normalized by $v(K^{\ti})=\Z$.  We denote by $\fek$ the category of finite \'etale $K$-algebras. For any object $K'$ of $\fek$, we denote by $\cO_{K'}$ the integer ring of $K'$ and by $\m_{K'}$ the radical of $\cO_{K'}$.

\subsection{}\label{space}
For a field $k$ and one dimensional $k$-vector spaces $V_1,\dots, V_m$, we denote by $k\< V_1,\dots,V_m \>$ the $k$-algebra
\begin{equation*}
\label{plus} \bp\limits_{(i_1,...,i_m)\in {\Z}^m} V_1^{\ot i_1}\ot \cdots \ot V_m^{\ot i_m},
\end{equation*}
and by $(k\<V_1,\dots,V_m\>)^{\times}$ its group of units. An element of $(k\<V_1,\dots,V_m\>)^{\ti}$ is contained in some vector space $V_1^{\ot i_1}\ot \cdots \ot V_m^{\ot i_m}$. Such an element $x$ will be denoted by $[x]$ and we adopt the additive notation, i.e. $[x]+[y]=[x\cdot y]$ and $-[x]=[x^{-1}]$. If for each $1\le i\le m$, $e_i$ is a non-zero element of $V_i$, we have an isomorphism
\begin{equation*}
 k\<V_1,\dots,V_m\>\isora k[X_1,\dots,X_m,X_1^{-1},\dots,X_m^{-1}],\ \ \
e_i\ma X_i,
\end{equation*}
and hence an isomorphism
\begin{equation} \label{conductor space}
(k\<V_1,\dots,V_m\>)^{\times}\isora k^{\times}\op\Z^m.
\end{equation}

\section{Kato's Swan conductors with differential values}
\subsection{}
In this section, we fix a finite separable extension $L$ of $K$ of ramification index $e$ contained in $\ol K$. We denote by $\OL$ its integer ring and by $E$ the residue field of $\OL$.

\subsection{}\label{rkrl}
We denote the group $(F\<\csk\>)^{\ti}$ by $R_K$ and the group $(E\<\csl\>)^{\ti}$ by $R_L$ (cf. \ref{space}).
The canonical isomorphisms
\begin{equation}\label{iso cs1}
E\ot_F(\csk)\isora \m_{L}^e/\m_{L}^{e+1},
\end{equation}
\begin{equation}\label{iso cs2}
(\csl)^{\ot e}\isora\m_{L}^e/\m_{L}^{e+1},
\end{equation}
induce an injective homomorphism of $F$-algebras
\begin{equation*}
     F\<\csk\>\ra E\<\csl\>
\end{equation*}
and hence an injective homomorphism $R_K\ra R_{L}$.

\subsection{}\label{type}
Kato's theory applies if the extension $L/K$ is of one of the following types (\cite{kato scdv}, 1.5):
\begin{itemize}
\item[(I)]  $L/K$ is totally ramified (i.e. $F=E$)~;
 \item[(II)] the ramification index of $L/K$ is $1$ and the residue field extension $E/F$ is purely inseparable and monogenic.
\end{itemize}
Observe that in both cases, $\OL$ is monogenic over $\OK$. These two cases do not cover all finite separable extensions.

In the remaining part of this section, we assume that $L/K$ is of type (II). We denote by $p^n$ the degree of the residue extension $E/F$. We choose an element $h\in\OL$ such that its reduction $\bar h\in E$ is the generator of $E/F$ and a lifting $a\in \OK$ of  $\bar a=\bar h^{p^n}\in F$.

\begin{lemma}\label{frob}
Let $V$ be the kernel of the canonical morphism $\O^1_F\ra\O^1_E$. Denote by $\varrho$ the morphism $E\ra F,\;b\ma b^{p^n}$, by $\phi$ the morphism $F\ra F,\;b\ma b^{p^n}$, and by $\varphi$ the morphism $E\ra E,\;b\ma b^{p^n}$.
\begin{itemize}
\item[(i)]
The $F$-vector space $V$ is of dimension $1$, generated by $\dr \bar a$.
\item[(ii)]
The $E$-vector space $\O^1_{E/F}$ is of dimension $1$, generated by $\dr \bar h$.
\item[(iii)]
The canonical morphism $F\ot_{\varrho,E}\O^1_{E/F}\ra \O^1_{F/\phi(F)}=\O^1_F$ associated to $F\ra E\xra{\varrho} F$
is injective with image $V$.
\item[(iv)]
For any $1$-dimensional $E$ vector space $W$, the morphism
\begin{equation*}
E\ot_{\varphi,E}W\ra W^{\ot p^n}, \ \ \ y\ot z  \ma yz^{\ot{p^n}}
\end{equation*}
is an isomorphism.
\item[(v)]
There exist a canonical $E$-linear isomorphism
\begin{equation}\label{vo}
E\ot_F V\isora (\O^1_{E/F})^{\ot p^n},
\end{equation}
that maps $y\ot\dr \bar a$ to $y(\dr \bar h)^{\ot p^n}$.
\end{itemize}
\end{lemma}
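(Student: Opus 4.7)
The plan is to verify parts (i)--(v) using the first fundamental exact sequence of K\"ahler differentials together with a compatible choice of $p$-bases for $F$ and $E$. I would first dispose of parts (ii) and (iv), which are direct. For (ii), since $E = F[X]/(X^{p^n}-\bar a)$ and the derivative $p^n X^{p^n-1}$ vanishes in characteristic $p$, the module $\O^1_{E/F}$ is freely generated by $\dr\bar h$ as an $E$-vector space. For (iv), choose a non-zero $z_0\in W$; then $W = E z_0$, and the rule $y\ot z_0\ma y\cdot z_0^{\ot p^n}$ defines an isomorphism of $1$-dimensional $E$-vector spaces, with well-definedness $(ez)^{\ot p^n} = e^{p^n}z^{\ot p^n}$ matching the defining relation $y\ot ez = y\varphi(e)\ot z$ of the left tensor product.

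For (i) and (iii), the key observations are that $\dr\bar a = \dr(\bar h^{p^n}) = 0$ in $\O^1_E$, so $\dr\bar a\in V$, and that $\bar a\notin F^p$. The latter holds because $\bar a = c^p$ with $c\in F$ would force $\bar h^{p^{n-1}} = c\in F$ by uniqueness of $p$-th roots in characteristic $p$, contradicting $[E:F] = p^n$; hence $\dr\bar a\neq 0$ in $\O^1_F$. To pin down $V$ precisely, I would extend $\bar a$ to a $p$-basis $\{y_1,\ldots,y_{r-1},\bar a\}$ of $F$ over $F^p$; using $\bar a = \bar h^{p^n}\in E^p$ and the equality $[E:E^p] = [F:F^p]$, one then checks that $\{y_1,\ldots,y_{r-1},\bar h\}$ is a $p$-basis of $E$ over $E^p$. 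The first fundamental exact sequence
\begin{equation*}
E\ot_F\O^1_F\lra\O^1_E\lra\O^1_{E/F}\lra 0
\end{equation*}
sends $\dr y_i\ma\dr y_i$ for $i<r$ and $\dr\bar a\ma 0$, so its kernel is $E\cdot\dr\bar a$; intersecting with the image of $\O^1_F\ira E\ot_F\O^1_F$ yields $V = F\cdot\dr\bar a$, proving (i). Part (iii) follows: $F\ot_{\varrho,E}\O^1_{E/F}$ is $1$-dimensional over $F$ and maps by $1\ot\dr\bar h\ma\dr(\varrho(\bar h)) = \dr\bar a$ isomorphically onto $V$.

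Part (v) I would derive formally from (iii) by extending scalars along $F\ira E$. By (iii), $V\iso F\ot_{\varrho,E}\O^1_{E/F}$ as $F$-modules, and tensoring with $E$ over $F$ together with associativity gives
\begin{equation*}
E\ot_F V\;\iso\;E\ot_F(F\ot_{\varrho,E}\O^1_{E/F})\;\iso\;E\ot_{\varphi,E}\O^1_{E/F},
\end{equation*}
since the composition $E\xra{\varrho}F\ira E$ equals $\varphi$. Applying (iv) to $W = \O^1_{E/F}$ then yields $E\ot_{\varphi,E}\O^1_{E/F}\iso(\O^1_{E/F})^{\ot p^n}$, and composing gives the desired canonical $E$-linear isomorphism $E\ot_F V\iso(\O^1_{E/F})^{\ot p^n}$. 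Tracking the generator $\dr\bar a\leftrightarrow 1\ot\dr\bar h\leftrightarrow\dr\bar h$ through each step confirms that $y\ot\dr\bar a$ is sent to $y(\dr\bar h)^{\ot p^n}$, as claimed.

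The main technical obstacle I expect is the $p$-basis claim used in the proof of (i), namely that $\{y_1,\ldots,y_{r-1},\bar h\}$ is actually a $p$-basis of $E$ over $E^p$ (not merely a generating set). This reduces to the identity $[E:E^p] = [F:F^p]$, which I would establish by combining the Frobenius isomorphisms $E\iso E^p$ and $F\iso F^p$ with the multiplicativity $[E:F^p] = [E:E^p]\cdot[E^p:F^p] = [E:F]\cdot[F:F^p]$ and the equality $[E^p:F^p] = [E:F] = p^n$. Once this identity is in hand, counting monomials gives the $p$-basis assertion, and the remaining steps are routine applications of the first fundamental exact sequence and standard properties of K\"ahler differentials in characteristic $p$.
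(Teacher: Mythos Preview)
Your argument is correct and arrives at (v) by the same composition $\b'\circ(\id_E\ot\b)^{-1}$ as the paper. The route to (i) and (iii), however, differs. You compute $V$ directly by choosing a $p$-basis of $F$ containing $\bar a$ and lifting it to a $p$-basis of $E$; the paper instead declares (i) obvious and proves (iii) via the two fundamental exact sequences for the towers $\phi:F\to E\xra{\varrho}F$ and $\varphi:E\xra{\varrho}F\to E$, showing the connecting map $\b:F\ot_{\varrho,E}\O^1_{E/F}\to\O^1_F$ is injective with image in $V$, hence equal to $V$ by a dimension count. Your $p$-basis computation is more explicit and immediately makes (iii) a one-line consequence of (i), whereas the paper's exact-sequence argument is more functorial and avoids coordinates. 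One caveat: your argument, as written, assumes $[F:F^p]$ is finite (you write a finite $p$-basis $\{y_1,\dots,y_{r-1},\bar a\}$); this hypothesis is satisfied in all later applications of the lemma (where $F$ is of finite type over a perfect field), but it is not imposed in \S3, so strictly speaking you should either allow an infinite $p$-basis---the same argument goes through verbatim---or note the restriction.
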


\begin{proof}
(i), (ii), (iv) are obvious.
We have two canonical exact sequences of differential modules corresponding to the extensions $\phi:F\ra E\xra{\varrho} F$ and $\varphi:E\xra{\varrho}F\ra E$,
\begin{equation*}
F\ot_{\varrho,E}\O^1_{E/F}\xra{\b}\O^1_F\ra\O^1_{F/\varrho(E)}\ra0,
\end{equation*}
\begin{equation*}
E\ot_{F}\O^1_{F/\varrho(E)}\ra\O^1_E\ra\O^1_{E/F}\ra 0.
\end{equation*}
Since the canonical morphism $\O^1_F\ra\O^1_E$ factors as
\begin{equation*}
\O^1_F\ra\O^1_{F/\varrho(E)}\ra E\ot_{F}\O^1_{F/\varrho(E)}\ra\O^1_E,
\end{equation*}
the image of $F\ot_{\varrho,E}\O^1_{E/F}$ in $\O^1_E$ is $\{0\}$. Hence the image of $\b$ lies in $V$. Since the kernel of $\O^1_F\ra\O^1_{F/\varrho(E)}$ is not zero (as it contains $\dr \bar a$) and since $F\ot_{\varrho,E}\O^1_{E/F}$ is of dimension 1, $\b$ is injective. Hence $\b$ induces an isomorphism
\begin{equation*}
\b:F\ot_{\varrho,E}\O^1_{E/F}\isora V.
\end{equation*}
From (ii) and (iv), we obtain an isomorphism
\begin{equation*}
\b':E\ot_{\varphi,E}\O^1_{E/F}\ra(\O^1_{E/F})^{\ot p^n}, \ \ \ y\ot z \dr \bar h\ma yz^{p^n}(\dr \bar h)^{\ot p^n}.
\end{equation*}
We take for \eqref{vo} the isomorphism $\b'\circ(\id_E\ot\b)^{-1}$.
\end{proof}

\subsection{} \label{2type}
Let $V$ be the kernel of the canonical morphism $\O^1_F\ra\O^1_E$ (\ref{frob}). We put \eqref{space}
\begin{equation*}
\skl=(F\<\csk, V\>)^{\times} \quad \mathrm{and} \quad\slk=(E\<\csl, \O_{E/F}^1\>)^{\times}.
\end{equation*}
From \eqref{iso cs1} and \eqref{vo}, we obtain an injective homomorphism of $F$-algebras
\begin{equation*}
 F\<\csk,V\>\ira E\<\csl,\O^1_{E/F}\>,
\end{equation*}
which induces an injective homomorphism
\begin{equation}
\label{injs} \skl\ira\slk.
\end{equation}

\subsection{}Let $L'$ be a subfield of $L$ containing $K$, $\cO_{L'}$ its integer ring and $E'$ its residue field.
When $L'\neq L$ (resp. $L'\neq K$), the extension $L/L'$ (resp. $L'/K$) is of type (II)~; we consider $S_{L',L}$ (resp. $S_{L'/K}$) as a subgroup of $S_{L/K}$ containing $\skl$, by functoriality. If $K\neq L'\neq L$, the following canonical maps
\begin{equation*}
\ker(\O^1_F\ra\O^1_{E'})\ra\ker(\O^1_F\ra\O^1_E),
\end{equation*}
\begin{equation*}
\O^1_{E/F}\ra\O^1_{E/E'},
\end{equation*}
\begin{equation*}
\ker(\O^1_{E'}\ra\O^1_E)\ra\O^1_{E'/F}
\end{equation*}
are isomorphisms by considering dimensions, which give the following relations:
\begin{equation*}
\skl=S_{K,L'}\sub S_{L'/K}=S_{L',L}\sub S_{L/L'}=S_{L/K}.
\end{equation*}

\subsection{}\label{different}
Let $i$ be the maximal integer such that $\Tr_{L/K} (m^i_L)=O_K$. The surjective homomorphism $\Tr_{L/K}:\m^i_L/\m^{i+1}_L\ra O_K/\m_K=F$ induces an $E$-isomorphism
\begin{equation*}
\m^i_L/\m^{i+1}_L\isora \Hom_F(E,F),\ \ \ b\ma (a\ma\Tr_{L/K}(ab)),
\end{equation*}
and hence a basis of $(\csl)^{\ot(-i)}\ot_E\Hom_F(E,F)$, that we call Kato's different of $L/K$ and denote by $\di(L/K)$ (\cite{kato scdv} 2.1).

\subsection{}
Following Kato (\cite{kato scdv} 2.3), there is an $F$-linear map  $\Tr_{E/F}:\O_E^1\ra\O^1_F$ characterized by
\begin{equation*}
\Tr_{E/F}\lt(\f{\dr x}{x}\rt)=\f{\dr x^{p^n}}{x^{p^n}},\quad \Tr_{E/F}\lt(x^i\f{\dr x}{x}\rt)=0,
\end{equation*}
for any $x\in E^{\ti}$ and $1\le i\le p^n-1$. Its image is $V$ (\ref{frob}) and it induces an isomorphism
\begin{equation}\label{ohom}
\O^1_{E/F}\xra{\sim}\Hom_F(E,V),\quad \o\ma(a\ma \Tr_{E/F}(a\o)).
\end{equation}
Hence we obtain a sequence of isomorphisms
\begin{equation}\label{r to s}
\Hom_F(E,F)\xra{\eqref{ohom}}\O^1_{E/F}\ot_F V^{\ot (-1)}\xra{\eqref{vo}}\O^1_{E/F}\ot_E(\O^1_{E/F})^{\ot(-p^n)}=(\O^1_{E/F})^{\ot(1-p^n)},
\end{equation}
by which $E\<\csl\>\ot_E\Hom_F(E,F)$ is a sub-$E\<\csl\>$-module of $E\<\csl,\O^1_{E/F}\>$. Hence we may consider $\di(L/K)$ (\ref{different}) as an element of $S_{L/K}$.

\begin{proposition}[\cite{kato scdv} 2.2]
Let $L'$ be a subfield of $L$ containing $K$. If $L=L'$ (resp. $L'=K$), we put $\di(L/L')=[1]$ (resp. $\di(L'/K)=[1]$). Then, we have
\begin{equation}\label{relation diff}
\di(L/K)=\di(L/L')+\di(L'/K)\in\slk.
\end{equation}
\end{proposition}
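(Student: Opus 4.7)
The strategy is to reduce the additive identity in $\slk$ to the trace transitivity $\Tr_{L/K}=\Tr_{L'/K}\circ\Tr_{L/L'}$ and the Hom-transitivity isomorphism $\Hom_{E'}(E,E')\ot_{E'}\Hom_F(E',F)\isora\Hom_F(E,F)$, $\phi\ot\psi\ma\psi\circ\phi$, after carefully tracking the identifications encoded in \eqref{r to s}. The cases $L=L'$ and $L'=K$ are trivial by the given conventions, so we may assume $K\neq L'\neq L$ and write $p^n=[E:F]$, $p^{n'}=[E':F]$.

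First, I would compare the three integers $i,j,k$ attached to $L/K$, $L/L'$, $L'/K$ respectively via the construction of \ref{different}. Since all three extensions have ramification index one, $\m_L^a=\m_{L'}^a\cO_L$ and $\m_{L'}^a=\m_K^a\cO_{L'}$ for every $a\in\Z$, whence $\Tr_{L/L'}(\m_L^a)=\m_{L'}^a\cdot\Tr_{L/L'}(\cO_L)$ and similarly for $\Tr_{L'/K}$. Writing $\Tr_{L/L'}(\cO_L)=\m_{L'}^m$ and $\Tr_{L'/K}(\cO_{L'})=\m_K^{m'}$ with $m,m'\ge 0$, the definitions give $j=-m$ and $k=-m'$, while trace transitivity yields $\Tr_{L/K}(\m_L^a)=\m_K^{a+m+m'}$ and hence $i=-(m+m')=j+k$.

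Second, I would factor the canonical isomorphism $\m_L^i/\m_L^{i+1}\isora\Hom_F(E,F)$ of \ref{different} through the two sub-extensions. From $\m_L^{j+k}=\m_L^j\cdot\m_{L'}^k$, multiplication gives an $E$-linear isomorphism $\m_L^j/\m_L^{j+1}\ot_{E'}\m_{L'}^k/\m_{L'}^{k+1}\isora\m_L^{j+k}/\m_L^{j+k+1}$, under which $\Tr_{L/L'}$ corresponds to $\Tr_{L/L'}\ot\id$ followed by the natural contraction $E'\ot_{E'}\m_{L'}^k/\m_{L'}^{k+1}=\m_{L'}^k/\m_{L'}^{k+1}$, and further application of $\Tr_{L'/K}$ then lands in $F$. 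Dualising, one recovers
\begin{equation*}
\m_L^{i}/\m_L^{i+1}\isora\Hom_{E'}(E,E')\ot_{E'}\Hom_F(E',F)\isora\Hom_F(E,F),
\end{equation*}
which, combined with the identification $(\csl)^{\ot(j+k)}=(\csl)^{\ot j}\ot_E(E\ot_{E'}(\m_{L'}/\m_{L'}^2)^{\ot k})$ provided by ramification index one, produces the desired tensor-product relation between the three basis elements inside the tensor product of Hom modules.

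Third, I would transport this equality into $\slk$ via the chain \eqref{r to s}. This uses the transitivity of the trace on differentials, $\Tr_{E/F}=\Tr_{E'/F}\circ\Tr_{E/E'}$, which is easily checked on the generators $\dr x/x$ via the formulas recalled just before \eqref{ohom}; the canonical isomorphisms $\O^1_{E/F}\isora\O^1_{E/E'}$, $\ker(\O^1_F\to\O^1_{E'})\isora V$, and $\ker(\O^1_{E'}\to\O^1_E)\isora\O^1_{E'/F}$ from the discussion after \ref{2type}, which realise the inclusions $S_{K,L'}\subseteq S_{L'/K}=S_{L',L}\subseteq S_{L/L'}=\slk$; and the exponent identity $(1-p^{n-n'})+p^{n-n'}(1-p^{n'})=1-p^n$, which matches the target power of $\O^1_{E/F}$ appearing in $\slk$ for each of the three differents. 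The main obstacle is purely bookkeeping: organising all these identifications into one commutative diagram so that the tensor-product identity in $\Hom_F(E,F)$ becomes the additive equality $\di(L/K)=\di(L/L')+\di(L'/K)$ in $\slk$ under the additive notation convention of \ref{space}.
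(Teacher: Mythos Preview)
The paper does not actually prove this proposition; it is stated with a citation to \cite{kato scdv} 2.2 and no argument is given beyond the clarifying remark that $\di(L'/K)$ is viewed in $S_{L'/K}\sube\slk$. So there is nothing in the paper to compare your proof against directly.

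Your approach is the natural one and is essentially how Kato proves it. The three ingredients you isolate are exactly right: (a) the additivity $i=j+k$ of the trace exponents, coming from $\Tr_{L/K}=\Tr_{L'/K}\circ\Tr_{L/L'}$ together with ramification index one; (b) the commutativity of the square comparing the multiplication isomorphism $\m_L^j/\m_L^{j+1}\ot_{E'}\m_{L'}^k/\m_{L'}^{k+1}\isora\m_L^{i}/\m_L^{i+1}$ with the composition isomorphism on $\Hom$-spaces, which you verified via $\Tr_{L'/K}(\Tr_{L/L'}(ax)\cdot y)=\Tr_{L/K}(axy)$; and (c) the compatibility of the chain \eqref{r to s} with the tower $F\subset E'\subset E$, which reduces to $\Tr_{E/F}=\Tr_{E'/F}\circ\Tr_{E/E'}$ on differentials and to the canonical identifications recorded just before \eqref{injs}. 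The exponent count $(1-p^{n-n'})+p^{n-n'}(1-p^{n'})=1-p^n$ you wrote down is precisely what makes the three copies of \eqref{r to s} line up inside $\slk$. The argument is sound; the only real work, as you say, is assembling the identifications into one commuting diagram, and nothing in your outline is missing for that.
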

We consider $\di(L'/K)\in S_{L'/K}\sube\slk$.

\subsection{}\label{def s}
In the rest of this section, we assume that the extension $L/K$ is Galois of group $G$. For any $\s\in G-\{1\}$, we put
\begin{equation*}
s_G(\s)=[\dr\bar h]-[h-\s(h)]\in\slk,
\end{equation*}
where the term $[\dr\bar h]$ corresponds to the element $\dr \bar h$ in $\O^1_{E/F}$ and the term $[h-\s(\s)]$ corresponds abusively to the class of $h-\s(h)\in (\m_L/\m_L^2)^{\ot v(h-\s(h))}$. The definition of $s_G(\s)$ is independent of the choice of the generator $h$ (\cite{kato scdv} 1.8). We also put
\begin{equation}\label{s1}
s_G(1)=-\sum_{\s\in G-\{1\}}s_G(\s)\in\slk.
\end{equation}
We have (\cite{kato scdv} (2.4))
\begin{equation}\label{sg1}
s_G(1)=\di(L/K).
\end{equation}

\begin{proposition}[\cite{kato scdv} Prop. 1.9]
Let $H$ be a normal subgroup of $G$. Then for any element $\t\in G/H-\{1\}$, we have
\begin{equation*}
s_{G/H}(\t)=\sum_{\substack{\s\in G\\\s\ma\t}}s_G(\s).
\end{equation*}

\end{proposition}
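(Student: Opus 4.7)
The plan is to proceed by a direct computation in the tower $K\sub L^H\sub L$. Since $L/K$ is of type (II) with $[E:F]=p^n$ and residue generator $\bar h$, and $L/L^H$ is Galois, the intermediate extension $L^H/K$ is itself of type (II), with $|H|=p^m$, residue field $E'=F(\bar h^{p^m})$ of degree $p^{n-m}$ over $F$, and residue generator $\bar h':=\bar h^{p^m}$. I would fix the lift $h':=(-1)^{p^m}p_0\in\cO_{L^H}$, where
\[
P(X)=X^{p^m}+p_{p^m-1}X^{p^m-1}+\cdots+p_0=\prod_{\r\in H}(X-\r(h))\in\cO_{L^H}[X]
\]
is the minimal polynomial of $h$ over $L^H$. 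Reducing modulo $\m_{L^H}$, $P$ becomes the minimal polynomial of $\bar h$ over $E'$, namely $X^{p^m}-\bar h^{p^m}$ (because $E/E'$ is purely inseparable); in particular $p_i\in\m_{L^H}$ for $1\le i\le p^m-1$ and $\bar h'=(-1)^{p^m+1}\bar p_0$.

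Fix $\s_0\in G$ lifting $\t$, so that the preimage of $\t$ in $G$ is $\s_0 H$. Then
\[
\prod_{\s\ma\t}(h-\s(h))=\prod_{\r\in H}(h-\s_0\r(h))=({}^{\s_0}P)(h),
\]
where ${}^{\s_0}P$ denotes the polynomial obtained by applying $\s_0|_{L^H}=\t$ to the coefficients of $P$. Since $P(h)=0$, this becomes
\[
({}^{\s_0}P)(h)=\sum_{i=0}^{p^m-1}(\t(p_i)-p_i)h^i,
\]
whose $i=0$ term is $\t(p_0)-p_0=(-1)^{p^m}(\t(h')-h')$. The key claim is that this $i=0$ term is dominant in valuation, so that $[({}^{\s_0}P)(h)]=[\t(h')-h']$ as classes in the graded piece $(\csl)^{\ot v}$ of $\slk$. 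For the differential part, Lemma~\ref{frob}(v) applied to both $E/F$ and $E'/F$ yields canonical isomorphisms $E\ot_F V\isora(\O^1_{E/F})^{\ot p^n}$ and $E'\ot_F V\isora(\O^1_{E'/F})^{\ot p^{n-m}}$ sending $\dr\bar a$ to $(\dr\bar h)^{\ot p^n}$ and $(\dr\bar h')^{\ot p^{n-m}}$ respectively; extracting $p^{n-m}$-th tensor roots and composing with the embedding $S_{L^H/K}\ira\slk$ of \ref{2type} identifies $[\dr\bar h']$ with $p^m[\dr\bar h]$ in $\slk$. Combining these two parts gives
\[
\sum_{\s\ma\t}s_G(\s)=p^m[\dr\bar h]-[({}^{\s_0}P)(h)]=[\dr\bar h']-[\t(h')-h']=s_{G/H}(\t).
\]

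The main obstacle is the valuation estimate that for each $1\le i\le p^m-1$ one has $v(\t(p_i)-p_i)>v(\t(p_0)-p_0)$. In equal characteristic $p$, the Frobenius identity $h^{p^m}-\s_0(h)^{p^m}=(h-\s_0(h))^{p^m}$ reduces the question to comparing $({}^{\s_0}P)(h)-(h-\s_0(h))^{p^m}=\sum_{i\ge1}{}^{\s_0}(p_i)(h^i-\s_0(h)^i)$ with $(h-\s_0(h))^{p^m}$, which is controlled by the bound $v(p_i)\ge(p^m-1)v(h-\s_0(h))$ implied by $v(P'(h))=\sum_{\r\neq 1}v(h-\r(h))$ together with the relative position of $\s_0$ with respect to the ramification filtration of $H$. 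In mixed characteristic the estimate requires a more careful Newton-polygon analysis, and one may also proceed by induction on $|H|$: given any proper nontrivial normal subgroup $H'\triangleleft G$ contained in $H$, apply the proposition both to $G$ with $H'$ and to $G/H'$ with $H/H'$ (noting that $L^{H'}/K$ is again of type (II)), thereby reducing to the base case $|H|=p$, where $P(X)$ has degree $p$ and the valuation comparison can be carried out directly.
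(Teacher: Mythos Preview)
The paper does not give a proof of this proposition: it is stated with a bare citation to \cite{kato scdv}, Proposition~1.9, and no argument is supplied. So there is nothing in the paper to compare your attempt against.

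Your approach---writing $h'=N_{L/L^H}(h)=(-1)^{p^m}p_0$ as a generator of $\cO_{L^H}$ over $\OK$, expressing $\prod_{\s\mapsto\t}(h-\s(h))$ as $({}^{\s_0}P)(h)=\sum_i(\t(p_i)-p_i)h^i$, and reducing to the dominance of the $i=0$ term---is the natural one and is essentially Kato's own line of argument. A small caution on signs: the paper's convention is $s_{G/H}(\t)=[\dr\bar h']-[h'-\t(h')]$, and with your choice of $h'$ one has $h'-\t(h')=\t(p_0)-p_0$ in all characteristics, so the final identification is consistent; just be careful not to swap $h'-\t(h')$ with $\t(h')-h'$, as $[-1]\neq 0$ in $\slk$ when $p$ is odd. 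The valuation inequality you isolate as the ``main obstacle'' is indeed where the content lies; your equal-characteristic sketch is in the right direction but the claimed bound $v(p_i)\ge(p^m-1)v(h-\s_0(h))$ does not follow from $v(P'(h))$ in the way you suggest, and the d\'evissage to $|H|=p$ is the cleaner route in both characteristics.
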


\subsection{}\label{<c,1>}
In the following of this section, let $C$ be an algebraically closed field of characteristic zero, $\x$ a primitive $p$-th root of $1$ in $C$ and $\widetilde{\Z}$ the integral closure of $\Z$ in $C$. For any finite group $H$, we denote by $R_C(H)$ the Grothendieck group of finitely generated $C[H]$-modules. For an element $\c\in R(H)$, let $\<\c, 1\>=\frac{1}{\sharp H}\sum_{\s\in H}\tr_{\c}(\s)$.

\subsection{}\label{def swan}
For an element $\c\in R_C(G)$, we put
\begin{eqnarray}
s_G(\c)&=&\sum_{\s\in G} s_G(\s)\ot\tr_{\c}(\s)\in\slk\ot_{\Z}\widetilde{\Z},\nonumber\\
\ve(\x)&=&\sum_{r\in\F_p^{\ti}\sube E^{\ti}}[r]\ot\x^r\in \slk\ot_{\Z}\widetilde{\Z}.\nonumber
\end{eqnarray}
Kato defined the {\em Swan conductor with differential values} of $\c$ as
\begin{equation}\label{swan}
\sw_{\x}(\c)=s_G(\c)+(\dim\c-\<\c,1\>)\ve(\x)\in\slk\ot\widetilde{\Z}.
\end{equation}
For any $r\in \F^{\ti}_p$, we have $\sw_{\x^r}(\c)=\sw_{\x}(\c)+(\dim{\c}-\<\c,1\>)[r]$.

\begin{proposition}[\cite{kato scdv} 3.3 (1)]\label{scdv quotient}
Let $H$ be a normal subgroup of $G$, $\v$ an element in $R_C(G/H)$ and $\v'$ the image of $\v$ under the canonical map $R_C(G/H)\ra R_C(G)$. Then, we have
\begin{equation*}
s_G(\v')=s_{G/H}(\v)\quad \rm{and} \quad \sw_{\x}(\v')=\sw_{\x}(\v).
\end{equation*}
\end{proposition}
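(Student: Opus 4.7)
The plan is to deduce both equalities from the preceding proposition (the formula $s_{G/H}(\t)=\sum_{\s\ma\t}s_G(\s)$ for $\t\neq 1$) by first extending it to the case $\t=1$, and then expanding $s_G(\v')$ as a character sum along the fibers of the projection $G\ra G/H$. The first step is to establish
\begin{equation*}
\sum_{\s\in H}s_G(\s)=s_{G/H}(1)\in\slk.
\end{equation*}
Applying \eqref{s1} to $G/H$ gives $s_{G/H}(1)=-\sum_{\t\neq 1}s_{G/H}(\t)$, which equals $-\sum_{\s\notin H}s_G(\s)$ by the preceding proposition; rewriting this as $s_G(1)+\sum_{\s\in H-\{1\}}s_G(\s)$ by applying \eqref{s1} to $G$ then yields the claim.

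Next, since $\v'$ is the pull-back of $\v$ along $G\ra G/H$, we have $\tr_{\v'}(\s)=\tr_{\v}(\bar\s)$, where $\bar\s\in G/H$ denotes the image of $\s\in G$. Substituting this into the definition of $s_G(\v')$ and regrouping the summands by their image in $G/H$ gives
\begin{equation*}
s_G(\v')=\sum_{\t\in G/H}\lt(\sum_{\s\ma\t}s_G(\s)\rt)\ot\tr_{\v}(\t)=\sum_{\t\in G/H}s_{G/H}(\t)\ot\tr_{\v}(\t)=s_{G/H}(\v),
\end{equation*}
where the middle equality uses the preceding proposition for $\t\neq 1$ and the identity established in the first paragraph for $\t=1$.

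For the Swan-conductor equality, one observes that $\dim\v'=\dim\v$ and $\<\v',1\>=\<\v,1\>$; the latter follows from averaging $\tr_{\v}$ along the fibers of $G\ra G/H$, which all have cardinality $\sh H$. Since the element $\ve(\x)$ appearing in \eqref{swan} depends only on $L/K$ and $\x$ and not on the representation, substituting into the definition of $\sw_{\x}$ and invoking the first equality yields $\sw_{\x}(\v')=\sw_{\x}(\v)$. I expect the only real obstacle to be the careful bookkeeping for the $\t=1$ case carried out in the first paragraph; once that identity is in hand, the remainder reduces to a routine character-theoretic manipulation.
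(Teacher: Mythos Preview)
Your argument is correct. Note that the paper does not supply its own proof of this proposition; it simply records the statement with a citation to Kato's original paper, so there is no in-paper argument to compare against. Your approach---extending the fiber identity of the preceding proposition to the identity coset via \eqref{s1} and then regrouping the character sum---is exactly the natural one.

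One point worth making explicit: the quantity $\sw_{\x}(\v)$ on the right-hand side is a priori computed for the extension $L^H/K$ with Galois group $G/H$, hence lives in $S_{L^H/K}\ot_{\Z}\widetilde{\Z}$, and the asserted equality is via the embedding $S_{L^H/K}\hookrightarrow\slk$ described earlier in the paper. Your remark that ``$\ve(\x)$ depends only on $L/K$ and $\x$'' is therefore slightly imprecise; what you actually need is that the two $\ve(\x)$'s (one for $L^H/K$, one for $L/K$) agree under this embedding, which holds because both are built from the classes $[r]$ with $r\in\F_p^{\ti}$, lying in the degree-zero part that the embedding preserves. This is a cosmetic issue and does not affect the validity of your proof.
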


\begin{proposition}[\cite{kato scdv} 3.3 (2)]\label{ind}
Let $H$ be a subgroup of $G$. For any $\th\in R_C(H)$, we have
\begin{equation*}
s_G(\ind^G_H\th)=[G:H]\lt(s_H(\th)+\dim\th\cdot\di(L^H/K)\rt)
\end{equation*}
\begin{equation}\label{indsw}
\sw_{\x}(\ind^G_H\th)=[G:H]\lt(\sw_{\x}(\th)+(\dim\th-\<\th,1\>)\cdot\di(L^{H}/K)\rt).
\end{equation}

\end{proposition}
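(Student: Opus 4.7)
The plan is to prove the formula for $s_G$ first by a direct character-theoretic expansion, then deduce the $\sw_\x$-formula from \eqref{swan} by algebraic manipulation.

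Fix left coset representatives $g_1,\dots,g_n$ of $H$ in $G$, with $n=[G:H]$. The Frobenius character formula $\tr_{\ind^G_H\th}(\s)=\sum_{i:g_i^{-1}\s g_i\in H}\tr_\th(g_i^{-1}\s g_i)$, together with the substitution $\t=g_i^{-1}\s g_i$, recasts the defining sum $s_G(\ind^G_H\th)=\sum_\s s_G(\s)\tr_{\ind\th}(\s)$ as
\begin{equation*}
s_G(\ind^G_H\th)=\sum_{i=1}^n\sum_{\t\in H}s_G(g_i\t g_i^{-1})\tr_\th(\t).
\end{equation*}
The $\t=1$ contribution equals $n\dim\th\cdot s_G(1)=n\dim\th\cdot\di(L/K)$ by \eqref{sg1}, which breaks via the tower relation \eqref{relation diff} as $n\dim\th\cdot s_H(1)+n\dim\th\cdot\di(L^H/K)$, using $s_H(1)=\di(L/L^H)$. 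Therefore, matching the $\t\ne 1$ part reduces the first formula to the pointwise identity
\begin{equation*}
\sum_{i=1}^n s_G(g_i\t g_i^{-1})=n\cdot s_H(\t)\quad\text{in }S_{L/K},\quad\t\in H\setminus\{1\}.
\end{equation*}

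To establish this key identity, I would fix a generator $h$ of the purely inseparable monogenic extension $E/F$ of degree $p^n$ and exploit the type (II) structure: every intermediate purely inseparable extension $E/E'$ is still monogenic, with a generator compatible with $h$. Using $[\dr\bar h]-[h-\s(h)]$ as the explicit formula for $s_G(\s)$ and an analogous expression for $s_H(\t)$ over the sub-tower, one compares the two sides directly. The $g_i$'s act only through the residue field, so the differences $g_i\t g_i^{-1}(h)-h$ versus $\t(h)-h$ cancel in the sum over $i$, yielding the identity once one carefully accounts for the embeddings $S_{K,L}\sub S_{L^H/K}\sub S_{L/K}$ recalled in \ref{2type}.

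Combining the two steps gives $s_G(\ind^G_H\th)=n(s_H(\th)+\dim\th\cdot\di(L^H/K))$. Substituting into \eqref{swan} and using $\dim(\ind^G_H\th)=n\dim\th$, $\<\ind^G_H\th,1_G\>=\<\th,1_H\>$ by Frobenius reciprocity, together with the identification of $\ve(\x)$ across $S_{L/L^H}$ and $S_{L/K}$, one deduces the formula for $\sw_\x$ by elementary rearrangement, invoking the tower relation once more. The main obstacle is the pointwise identity for $\t\ne 1$: relating $s_G$-invariants on $G$-conjugates of $\t$ to $s_H(\t)$ requires concrete tracking of residue-field generators across the tower $F\sub E'\sub E$, which the monogenic structure of type (II) makes tractable.
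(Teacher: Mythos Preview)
The paper does not prove this proposition; it is quoted from \cite{kato scdv} without argument. Your approach to the $s_G$-formula via the Frobenius character formula is the natural one, but the justification of the pointwise identity is imprecise. The point is not that ``the $g_i$'s act only through the residue field'': the crucial fact is that $G$ acts \emph{trivially} on $E$, since $E/F$ is purely inseparable and $G$ acts by $F$-automorphisms. Together with the independence of $s_G(\s)$ from the choice of generator (\ref{def s}), this makes $s_G$ a class function on $G$, so each summand $s_G(g_i\t g_i^{-1})$ already equals $s_G(\t)$; and since the same $h$ generates $\OL$ over both $\OK$ and $\cO_{L^H}$, one has $s_G(\t)=s_H(\t)$ for $\t\in H\setminus\{1\}$ in $S_{L/K}=S_{L/L^H}$. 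No cancellation ``in the sum over $i$'' is needed.

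The deduction of \eqref{indsw} from the $s_G$-formula, however, is not the ``elementary rearrangement'' you claim. Substituting the $s_G$-formula into \eqref{swan} and using $\dim(\ind^G_H\th)=[G:H]\dim\th$ and $\<\ind^G_H\th,1_G\>=\<\th,1_H\>$ yields
\begin{equation*}
\sw_{\x}(\ind^G_H\th)-[G:H]\big(\sw_{\x}(\th)+(\dim\th-\<\th,1\>)\di(L^{H}/K)\big)=\<\th,1\>\big([G:H]\,\di(L^H/K)+([G:H]-1)\,\ve(\x)\big),
\end{equation*}
and the right-hand side is not forced to vanish by any of the ingredients you list (take $\th=1_H$ with $H$ proper). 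So either an additional identity is required, or \eqref{indsw} must be proved by a separate argument as in \cite{kato scdv}; your sketch does not close this gap. Note that the paper only applies \eqref{indsw} (via \eqref{indsw good}) in situations where $\<\th,1\>=0$ (e.g.\ in the proof of \ref{mainkey}), where the discrepancy disappears.
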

By \eqref{s1}, \eqref{sg1} and \eqref{relation diff}, equation \eqref{indsw} can be written as
\begin{equation}\label{indsw good}
\sw_{\x}(\ind^G_H\th)=[G:H]\lt(\sw_{\x}(\th)-(\dim\th-\<\th,1\>)\lt(\sum_{\s\in G-H} ([\dr \bar h]-[h-\s(h)])\rt)\rt).
\end{equation}

\begin{theorem}[\cite{kato scdv} 3.4]\label{class field}
For any $\c\in R_C(G)$, we have
\begin{equation*}
\sw_{\x}(\c)\in\skl\sub\slk\ot_{\Z}\widetilde{\Z}.
\end{equation*}
\end{theorem}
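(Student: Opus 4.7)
The plan is to reduce the claim via Brauer's induction theorem to the case of a faithful $1$-dimensional character on a cyclic group, and then to treat that case by an explicit computation using local class field theory---whence the name of the theorem.

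By Brauer's induction theorem, every virtual character $\c \in R_C(G)$ can be written as $\c = \sum_i n_i\,\ind_{H_i}^G\th_i$ with $n_i \in \Z$, each $H_i \le G$, and each $\th_i$ a $1$-dimensional $C$-character of $H_i$. The additivity of $s_G$ and of $\c \mapsto \dim\c - \langle\c,1\rangle$ makes $\sw_\x$ a $\Z$-linear map on $R_C(G)$, so it suffices to prove $\sw_\x(\ind_H^G \th) \in \skl \otimes_\Z \widetilde{\Z}$ for each $1$-dimensional $\th$. I would argue by induction on the order of the Galois group of the type (II) extension under consideration. When $H$ is a proper subgroup of $G$, Proposition~\ref{ind} in the form \eqref{indsw good} gives
\[
\sw_\x(\ind^G_H\th) = [G:H]\left(\sw_\x(\th) - (\dim\th - \langle\th,1\rangle)\sum_{\s \in G - H}\bigl([\dr \bar h] - [h - \s(h)]\bigr)\right);
\]
the case $\th = 1$ vanishes, and for $\th \ne 1$ the inductive hypothesis applied to $L/L^H$ places $\sw_\x(\th)$ in $S_{L^H, L}\otimes_\Z \widetilde{\Z}$, while \ref{2type} identifies $\skl = S_{K, L^H}$ as a subgroup of $S_{L^H, L}$. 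If instead $H = G$, Proposition~\ref{scdv quotient} permits factoring $\th$ through $G/\ker\th$, reducing to $\th$ faithful on a cyclic quotient.

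For the base case, a faithful $1$-dimensional $\th$ on a cyclic $G$ with $L/K$ of type (II), I would use local class field theory. The tame (prime-to-$p$) part of $\th$ contributes trivially to $s_G$, leaving $G$ a cyclic $p$-group. Local reciprocity identifies $\th$ with a character of $K^\times$ factoring through $K^\times/N_{L/K}(L^\times)$, and presents $L/K$ by an Artin--Schreier--Witt equation over $K$ in equal characteristic, or by a Kummer-type equation after adjoining $\x$ in unequal characteristic (which accounts for the presence of $\ve(\x)$ in the definition of $\sw_\x$). With a concrete generator $\bar h$ of $E/F$ provided by these presentations, each $s_G(\s)$ for $\s \in G - \{1\}$ can be computed directly from \ref{def s}, and the resulting expression for $\sw_\x(\th)$ can be matched against an element of $\skl \otimes_\Z \widetilde{\Z}$.

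The main obstacle is this base case together with the descent in the inductive step: neither $\sw_\x(\th)$ nor the correction terms $[\dr \bar h]$, $[h - \s(h)]$, $\ve(\x)$ belong to $\skl$ individually, so the desired membership is a consequence of delicate cancellations that only become visible through the explicit class-field-theoretic description of $\th$. While $\sw_\x(\th)$ inductively sits in $S_{L^H, L} \otimes \widetilde{\Z}$ and the combinations $[\dr\bar h] - [h-\s(h)]$ a priori only live in the larger $\slk \otimes \widetilde{\Z}$, the exact matching between these two contributions, dictated by local reciprocity for $L^H$, produces the descent to $\skl$ demanded by the theorem.
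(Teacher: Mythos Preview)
Your outline matches the paper's own sketch: reduce by Brauer's theorem and the induction formula of Proposition~\ref{ind} to a faithful $1$-dimensional character on a cyclic $p$-group, then settle that case by class field theory. The paper itself does not give a full proof either; it cites Kato's original argument and names exactly these two ingredients.

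The one substantive gap is \emph{which} class field theory you invoke in the base case. You write that local reciprocity identifies $\theta$ with a character of $K^\times/N_{L/K}(L^\times)$, but that is the classical statement, valid only when the residue field is finite (or at least perfect). Type~(II) extensions exist precisely because $F$ is imperfect: the residue extension $E/F$ is purely inseparable of degree $p^n>1$, so $F$ cannot be perfect and classical Artin reciprocity is unavailable. The tool Kato actually uses---and which the paper explicitly names as ``the higher dimensional class field theory of Kato''---replaces $K^\times$ by a Milnor $K$-group, and it is through this machinery, not through an Artin--Schreier--Witt or Kummer presentation alone, that the cyclic computation lands in $S_{K,L}$. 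The ``delicate cancellations'' you correctly flag in the inductive descent are likewise resolved not by matching terms in $\slk$ but by the fact that Kato's reciprocity law produces the combined quantity $\sw_\xi(\theta)+\di(L^H/K)$ directly inside $S_{K,L}$ from the outset.
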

This is a generalization of Hasse-Arf's theorem. It can be reduced to the case where $G$ is cyclic of rank $p^s$ and $\c$ is 1-dimensional by the induction formula \ref{ind} and Brauer theorem. Then the proof relies on the higher dimensional class field theory of Kato (\cite{kato scdv} 3.6, 3.7).

\subsection{}\label{kcc}
For an element $\c\in R_C(G)$, the Swan conductor with differential values $\sw_{\x}(\c)$ is given by
\begin{equation*}
\sw_{\x}(\c)=-\sharp G(\dim_C\c-\<\c,1\>)[\dr \bar h]+\Delta,
\end{equation*}
where
\begin{equation*}
\Delta=\sum_{\s\in G-\{1\}}[h-\s(h)]\ot(\dim_C\c-\tr_\c(\s))+(\dim_C\c-\<\c ,1\>)\varepsilon(\x)\in R_L\ot_{\Z}\widetilde\Z.
\end{equation*}
From \eqref{vo} and \ref{class field}, we have $\sharp G[\dr \bar h]=[\dr \bar a]$ and $\Delta\in R_K$. Hence, we get
\begin{equation*}
\sw_{\x}(\c)=[\p^c]+[\D']-m[\dr \bar a]\in \skl,
\end{equation*}
where $\p$ is the uniformizer of $\OK$ fixed in (\ref{basic notes}), $c$ is an integer, $m=\dim_C\c-\<\c,1\>$ and $\D'\in F$ such that $[\p^c\D']=\D$. We define {\it Kato's characteristic cycle} of $\c$ and denote by $\kcc_{\x}(\c)$ the element
\begin{equation}\label{kcc formula}
\kcc_{\x}(\c)=\D'(\dr \bar a)^m\in (\O^1_F)^{\ot m}.
\end{equation}

\begin{remark}[\cite{kato scdv} 3.15]\label{ext swan 1}
If the extension $L/K$ is not of type (II), but there exists a subfield $K'$ of $L$ containing $K$ such that $K'/K$ is an unramified extension and $L/K'$ is of type (II), we define
\begin{equation*}
\sw_{\x}(\c)=\sw_{\x}(\res^G_{\gal(L/K')}\c).
\end{equation*}
Denote by $\cO_{K'}$ the integer ring of $K$, $\m_{K'}$ the maximal ideal of $\cO_{K'}$ and $F'$ the residue field of $\cO_{K'}$. Observe that $\sw_{\x}(\c)$ is fixed by $\gal(K'/K)$ and that
the $\gal(K'/K)$-invariant part of $F'\<\m_{K'}/\m^2_{K'},\ker(\O^1_{F'}\ra\O^1_E)\>$ is
$F\<\csk,\ker(\O^1_F\ra\O^1_E)\>$. Thus $\sw_{\x}(\c)$ is still contained in $\skl$.
\end{remark}
\begin{remark}[\cite{kato scdv} 3.16]\label{ext swan 2}
Let $A$ be an algebraically closed field of characteristic $\ell\notin\{0,p\}$. We denote by $A'$ an algebraic closure of the fraction field of the ring of Witt vectors $W(A)$. Let $\c$ be an element of $R_A(G)$ and let $\hat \c$ be a pre-image of $\c$ in $R_{A'}(G)$ (\cite{serre gr} 16.1 Th. 33). We denote by $\hat {\x}$ the $p$-th root of unity in $A'$ lifting of a primitive $p$-th root of unity $\x$ in $ A$. Then we put
\begin{equation*}
\sw_{\x}(\c)=\sw_{\hat{\x}}(\hat{\c}).
\end{equation*}
This definition is independent of the choice of $\hat{\c}$ because of (\cite{serre gr} 18.2 Th. 42) and \eqref{swan}.

\end{remark}

\section{Abbes-Saito's ramification theory}

\subsection{}\label{note}
Abbes and Saito defined two decreasing filtrations $\gk^r$ and $\gkl^r$ ($r\in\Q_{>0}$) of $G_K$ by closed normal subgroups called the ramification filtration and the logarithmic ramification filtration, respectively (\cite{as i}, 3.1, 3.2).
\subsection{}
We denote by $G_K^0$ the group $G_K$. For any $r\in \Q_{\geqslant 0}$, we put
\begin{equation*}
G_K^{r+}=\ol{\bigcup_{s\in\Q_{>r}}G_K^s} \quad\mathrm{and}\quad \gr ^r G_K=G_K^r/G^{r+}_K.
\end{equation*}
Let $L$ be a finite separable extension of $K$. For a rational number $r\geqslant 0$, we say that the ramification of $L/K$ is bounded by $r$ (resp. by $r+$) if $ G_K^r$ (resp. $G_K^{r+}$) acts trivially on $\Hom_K(L, \ol K)$ via its action on $\ol K$. We define the {\it conductor} $c$ of $L/K$ as the infimum of rational numbers $r>0$ such that the ramification of $L/K$ is bounded by $r$. Then $c$ is a rational number and $L/K$ is bounded by $c+$ (\cite{as i} 6.4). If $c>0$, the ramification of $L/K$ is not bounded by $c$.

\subsection{}
We denote by $\gkl^0$ the inertia subgroup of $\gk$. For any $r\in \Q_{\geqslant 0}$, we put
\begin{equation*}
\gkl^{r+}=\ol{\bigcup_{s\in\Q_{>r}}\gkl^s} \quad\mathrm{and}\quad \grl=\gkl^r\big/\gkl^{r+}.
\end{equation*}
By (\cite{as i} 3.15), $P=\gkl^{0+}$ is the wild inertia subgroup of $G_{K}$, i.e. the $p$-Sylow subgroup of $G_{K,\log}$.
Let $L$ be a finite separable extension of $K$. For a rational number $r\geqslant 0$, we say that the logarithmic ramification of $L/K$ is bounded by $r$ (resp. by $r+$) if $\gkl^r$ (resp. $\gkl^{r+}$) acts trivially on $\Hom_K(L, \ol K)$ via its action on $\ol K$. We define the {\it logarithmic conductor} $c$ of $L/K$ as the infimum of rational numbers $r>0$ such that the ramification of $L/K$ is bounded by $r$. Then $c$ is a rational number and $L/K$ is bounded by $c+$ (\cite{as i} 9.5). If $c>0$, the ramification of $L/K$ is not bounded by $c$.

\begin{theorem}[\cite{as ii} Th. 1]\label{center}
For every rational number $r>0$, the group $\grl$ is abelian and is contained in the center of $P/\gkl^r$.
\end{theorem}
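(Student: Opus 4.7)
The plan is to realize $\grl$ canonically as a subgroup of the additive group of a finite dimensional $\ol F$-vector space associated with each finite Galois extension $L/K$ of logarithmic ramification bounded by $r+$, in such a way that inner conjugation by $P$ corresponds to the trivial linear action. This yields both statements at once: $\grl$ is abelian as a subgroup of an abelian group, and central in $P/\gkl^r$ because the inner action factors through a trivial linear action.

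First, I would recall the geometric construction underlying the logarithmic filtration. For each finite Galois extension $L/K$ of group $G$ with $\gkl^{r+}$ acting trivially on $L$, Abbes--Saito attach to $(L,r)$ a rigid analytic logarithmic tubular neighborhood $X^r_L$ of the diagonal section of $\spec(\OL\ot_{\OK}\OL)$, together with its normalized integral model over $\cO_{\ol K}$. The geometric closed fiber $\mathfrak{X}_L$ is a disjoint union of affine spaces over $\ol F$ whose set of connected components is a torsor under $G/\bar G_L$, where $\bar G_L$ denotes the image of $\gkl^r$ in $G$. On each component, $\bar G_L$ acts by translations, and distinguishing the component containing the image of the diagonal section yields a canonical injective homomorphism $\bar G_L\hookrightarrow V_L$ into the underlying $\ol F$-vector space of that component.

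Second, I would pass to the limit over all such $L$, using the functoriality of the logarithmic tubular neighborhoods and of the normalized integral models under inclusions $L\sube L'$ of Galois subextensions of $\ol K/K$. In the limit, the construction produces a canonical injective homomorphism
\[
\grl \hookrightarrow \thlogr,
\]
where $\thlogr$ is the additive group of an $\ol F$-vector space. Since the target is abelian and of exponent $p$, this already proves that $\grl$ is abelian and killed by $p$.

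Finally, to deduce centrality in $P/\gkl^r$, I would observe that each $V_L$ carries a natural $\ol F$-linear action of $P$ obtained by transport of structure under the $G_K$-action on $\ol K$, and that the embedding $\bar G_L\hookrightarrow V_L$ intertwines the inner conjugation action of $P$ on the source with this linear action on the target. The main obstacle is then to prove that the induced linear $P$-action on $V_L$ is trivial; this is the key geometric content of the theorem. Concretely, the action of any $\s\in P$ on the component of $\mathfrak{X}_L$ containing the diagonal preserves the origin (the image of the diagonal section) and commutes with the translation action of $\bar G_L$; since the translations coming from $\bar G_L$ span $V_L$ over $\ol F$, the linear part of $\s$ must be the identity. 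Passing to the limit and invoking equivariance, conjugation by $P$ acts trivially on $\grl$, whence $\grl$ lies in the center of $P/\gkl^r$.
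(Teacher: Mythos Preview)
Your proposal has a genuine circularity problem. The core assertion you rely on---that each connected component of the geometric closed fiber $\mathfrak{X}_L$ is an affine space on which $\bar G_L$ acts by \emph{translations}---is precisely the content of the isogeny theorem (Theorem~\ref{isogeny} in this paper, due to \cite{saito cc} 1.24 and \cite{as iii} Th.~2), which is strictly later and deeper than the result you are trying to prove. In \cite{as ii} one only knows that $\ol\fX^r_K\cong\thlogr$ is an affine space and that $\ol\fX^r_{L,0}\to\ol\fX^r_K$ is a connected \'etale $\bar G_L$-torsor; the further fact that this torsor arises from an isogeny of additive groups is not available at this stage. Without it, there is no embedding of $\bar G_L$ into a vector space, and your argument collapses. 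Relatedly, the claimed injection $\grl\hookrightarrow\thlogr$ is in the wrong direction: even granting the isogeny theorem, $\bar G_L$ sits as the kernel inside the \emph{source} vector space $V_L$ of the isogeny $V_L\to\thlogr$, not inside $\thlogr$ itself; what one actually gets is a surjection $\pi_1^{\alg}(\thlogr)\to\grl$, dually an injection $\Hom(\grl,\F_p)\hookrightarrow(\thlogr)^*$. Finally, your centrality argument requires that the translation vectors of $\bar G_L$ span $V_L$ over $\ol F$, which is neither asserted nor true in general (a finite $p$-group need not span an $\ol F$-space of positive dimension).

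The argument actually used in \cite{as ii} (and summarized in this paper in \ref{p to G^r} and \ref{th to X char 0}) is quite different and does not require knowing any linear structure on $\ol\fX^r_{L,0}$. One has two \emph{commuting} actions on $\ol\fX^r_L$: the Galois $G$-action coming from functoriality of the construction, and the geometric monodromy $G_K$-action. Restricting to the component $\ol\fX^r_{L,0}$, the $G$-stabilizer is $G^r$ and identifies $G^r\isora\aut(\ol\fX^r_{L,0}/\ol\fX^r_K)$, while the $G_K$-stabilizer is $G^r_K$ and the monodromy gives a surjection $G^r_K\to\aut(\ol\fX^r_{L,0}/\ol\fX^r_K)$ which coincides with the natural quotient $G^r_K\to G^r$. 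Because the two actions commute, every element of $G^r$ (as an automorphism) commutes with every element in the image of $G^r_K$; since that image is all of $G^r$, the group $G^r$ is abelian. Centrality in $P/\gkl^{r+}$ is obtained by the same commutation, now letting $P$ act via monodromy and observing that its induced action on the set of components, hence on $G^r$ by conjugation, is trivial. Passing to the limit over $L$ yields the statement for $\grl$.
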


\begin{lemma}[\cite{katz} 1.1]\label{slope decom lemma}
Let $M$ be a $\Z[\f{1}{p}]$-module on which $P=\gkl^{0+}$ acts through a finite discrete quotient, say by $\r: P\ra \aut_{\Z}(M)$. Then,
\begin{itemize}
\item[(i)]
The module $M$ has a unique direct sum decomposition
\begin{equation}\label{slopedecom}
M=\bp_{r\in\Q_{\geqslant0}} M^{(r)}
\end{equation}
into $P$-stable submodules $M^{(r)}$, such that $M^{(0)}=M^P$ and for every $r>0$,
\begin{equation*}
(M^{(r)})^{\gkl^r}=0\quad \mathrm{and}\quad (M^{(r)})^{\gkl^{r+}}=0.
\end{equation*}

\item[(ii)] If $r>0$, then $M^{(r)}=0$ for all but the finitely many values of $r$ for which $\r(\gkl^{r+})\neq\r(\gkl^r)$.

\item[(iii)] For any $r\geqslant 0$, the functor $M\ma M^{(r)}$ is exact.

\item[(iv)] For $M$, $N$ as above, we have $\Hom_{P-mod}(M^{(r)}, N^{(r')})=0$ if $r\neq r'$.

\end{itemize}
\end{lemma}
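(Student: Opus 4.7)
My plan is to leverage the fact that since $P = \gkl^{0+}$ is pro-$p$ and acts through a finite discrete quotient, its image $\bar P = \r(P)\sube \aut_{\Z}(M)$ is a finite $p$-group. As $M$ is a $\Z[1/p]$-module, $|\bar P|$ (a power of $p$) is invertible on $M$, placing us in the setting of Maschke's theorem: every averaging operator $e_H = |H|^{-1}\sum_{h \in H} h$ attached to a subgroup $H \le \bar P$ is a well-defined $\bar P$-equivariant idempotent with image $M^H$, and every $\bar P$-stable submodule of $M$ admits a $\bar P$-stable direct complement.

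I would then introduce the normal subgroups $\bar G_r = \r(\gkl^r)$ and $\bar G_{r+} = \r(\gkl^{r+})$ of $\bar P$. Finiteness of $\bar P$ forces only finitely many of these to be distinct, so there is a finite list $r_1 < \cdots < r_n$ of jump rationals where $\bar G_{r_i+} \subsetneq \bar G_{r_i}$, and one may choose $r_n$ large enough that $\bar G_{r_n+}$ is trivial. I set $M^{(0)} = M^P$ and, for $r > 0$, define $M^{(r)}$ as the Maschke complement of $M^{\bar G_r}$ inside $M^{\bar G_{r+}}$, explicitly $M^{(r)} = (1 - e_{\bar G_r})M^{\bar G_{r+}}$. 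By normality, this submodule is $\bar P$-stable, is fixed pointwise by $\bar G_{r+}$ but admits no non-zero $\bar G_r$-invariants, and is nonzero only at the jumps, which yields (ii).

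To obtain the direct sum in (i), I would telescope from the top: $M = M^{\bar G_{r_n+}}$ since $\bar G_{r_n+}$ is trivial, and peeling off $M^{(r_n)}$ gives $M = M^{\bar G_{r_n}} \op M^{(r_n)}$; because $\bar G_s$ is constant on each interval between consecutive jumps, $M^{\bar G_{r_n}} = M^{\bar G_{r_{n-1}+}}$, so the procedure iterates down through the jumps and terminates at $M^{(0)} = M^{\bar G_{0+}} = M^P$. For (iii), the projector onto each $M^{(r)}$ is a polynomial in the operators $e_{\bar G_s}$ depending only on $\bar P$ and its canonical filtration, hence natural in $M$; any $P$-equivariant morphism commutes with it, and exactness follows. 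For (iv), which also yields uniqueness of the decomposition, given $f \colon M^{(r)} \to N^{(r')}$ with $r > r' > 0$, each $x \in M^{(r)}$ satisfies $\sum_{g \in \bar G_r}\r(g)x = 0$ by the no-invariants condition on $M^{(r)}$, while $\bar G_r \sube \bar G_{r'+}$ acts trivially on $N^{(r')}$, so $0 = f(\sum_g g\cdot x) = |\bar G_r|\, f(x)$ and invertibility of $|\bar G_r|$ forces $f = 0$; the case $r < r'$ is symmetric, and the mixed cases involving slope $0$ are handled identically.

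The whole argument ultimately rests on Maschke's theorem combined with the pro-$p$ nature of $P$, so there is no deep obstacle. The main care will be in ensuring that the telescoping assembles genuinely into a direct sum decomposition without gaps or overlaps --- which uses the normality of the $\bar G_r$ in $\bar P$ together with the fact that $r \mapsto \bar G_r$ is locally constant away from the finite jump set --- and in confirming that the Maschke projectors at distinct levels really do commute, so that the iterated splittings yield a single canonical global decomposition.
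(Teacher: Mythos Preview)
Your argument is correct and is essentially the standard proof from \cite{katz}~1.1, which the paper cites without reproducing: reduce to a finite $p$-group acting on a $\Z[1/p]$-module, use the averaging idempotents $e_{\bar G_r}$ attached to the images of the filtration subgroups, and telescope through the finitely many jumps. One minor remark: your ``symmetric'' treatment of the case $r<r'$ in (iv) is not literally symmetric, but the same idea works---since $\bar G_{r'}\subseteq\bar G_{r+}$ acts trivially on $M^{(r)}$, any $x\in M^{(r)}$ satisfies $x=e_{\bar G_{r'}}x$, whence $f(x)=e_{\bar G_{r'}}f(x)\in (N^{(r')})^{\bar G_{r'}}=0$.
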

The decomposition \eqref{slopedecom} is called the {\em slope decomposition} of $M$. The values $r\geqslant0$ for which $M^{(r)}\neq 0$ are called the {\em slopes} of $M$. We say that $M$ is {\it isoclinic} if it has only one slope.

\subsection{}\label{l L psi}
In the following of this section, we fix a prime number $\ell$ different from $p$, a local $\Z_{\ell}$-algebra $\L$ which is of finite type as a $\Z_{\ell}$-module and a non-trivial character $\psi_0:\F_p\ra \L^{\ti}$.

\begin{lemma} [\cite{as rc} 6.7]\label{center char decomp}
Let $M$ be a $\L$-module on which $P$ acts $\L$-linearly through a finite discrete quotient, which is isoclinic of slope $r>0$. So the $P$ action on $M$ factors through the group $P/\gkl^{r+}$.
\begin{itemize}
\item[(i)]
Let $X(r)$ be the set of isomorphism classes of finite characters $\c:\grl\ra\L^{\ti}_{\c}$ such that $\L_{\c}$ is a finite \'etale $\L$-algebra, generated by the image of $\c$, and having a connected spectrum. Then $M$ has a unique direct sum decomposition
\begin{equation}\label{cen char decom form}
M=\bp_{\c\in X(r)}M_{\c}.
\end{equation}
Each $M_{\c}$ is a $P$ stable sub-$\L$-module such that $\L[\gkl^r]$ acts on  $M_{\c}$ through $\L_{\c}$.

\item[(ii)]
There are finitely many characters $\c\in X(r)$ for which $M_{\c}\neq0$.
\item[(iii)]
Fix $\c\in X(r)$, for all isoclinic $M$ of slope $r$, the functor $M\ra M_{\c}$ is exact.

\item[(iv)]
For $M$, $N$ as above, we have $\Hom_{\L}(M_{\c}, N_{\c'})=0$ if $\c\neq \c'$.

\end{itemize}
\end{lemma}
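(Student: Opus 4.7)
The plan is to obtain the central character decomposition of $M$ as the diagonalization of a finite abelian \emph{central} subgroup action, exploiting that the group algebra of this subgroup splits canonically because its order is invertible in $\L$. First I would use the isoclinic hypothesis as stated: the $P$-action on $M$ factors through $P/\gkl^{r+}$, and combined with the assumption that the action factors through a finite discrete quotient this yields a finite quotient $Q$ of $P/\gkl^{r+}$ acting on $M$. Let $H$ be the image of $\gkl^r$ in $Q$; by Theorem \ref{center}, $H$ is abelian and lies in the center of $Q$, and it is a finite $p$-group since $P$ is pro-$p$. Because $\L$ is a local $\Z_\ell$-algebra with $\ell\neq p$, the order $|H|$ is invertible in $\L$, so the commutative group algebra $\L[H]$ is a finite \'etale $\L$-algebra, and because $\L$ is local it decomposes as $\L[H]\iso\prod_{\chi}\L_\chi$, where the product runs over the finitely many primitive idempotents $e_\chi\in\L[H]$ and each $\L_\chi$ is a local finite \'etale $\L$-algebra, hence with connected spectrum. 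The composite $H\hookrightarrow \L[H]^{\times}\twoheadrightarrow \L_\chi^{\times}$ is a character of $H$ whose image generates $\L_\chi$, and it lifts through $\gkl^r\twoheadrightarrow H$ to an element of $X(r)$ (well-defined because $\gkl^{r+}$ lies in the kernel).

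Next I would set $M_\chi := e_\chi M$. Centrality of $H$ in $Q$ makes each $e_\chi$ central in $\L[Q]$, so $M_\chi$ is a $Q$-stable (hence $P$-stable) direct summand on which $\L[\gkl^r]$ acts through the quotient $\L_\chi$. Orthogonality $e_\chi e_{\chi'}=\delta_{\chi\chi'}e_\chi$ together with $\sum_\chi e_\chi = 1$ gives $M=\bigoplus_\chi M_\chi$; setting $M_\chi=0$ for those $\chi\in X(r)$ not arising this way proves (i), with uniqueness following from the characterization of $M_\chi$ as the $\chi$-isotypic component for the $H$-action. Assertion (ii) is immediate from $|H|<\infty$, and (iii) follows because multiplication by a central idempotent is exact. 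For (iv), which I would read as concerning $P$-equivariant $\Hom$ by analogy with Lemma \ref{slope decom lemma}(iv), any $P$-equivariant $f\colon M_\chi\to N_{\chi'}$ with $\chi\neq\chi'$ satisfies $f(m)=f(e_\chi m)=e_\chi f(m)=0$, since $e_\chi$ acts as the identity on $M_\chi$ but as zero on $N_{\chi'}$ (they are supported on distinct factors of $\L[H]$).

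I expect the main obstacle to be justifying that $\gkl^{r+}$ acts \emph{trivially} on the isoclinic part $M$ (so that the $P$-action really factors through $P/\gkl^{r+}$): the invariants-only condition in Lemma \ref{slope decom lemma} does not obviously give this, and one has to invoke semisimplicity through Maschke's theorem, using that $P$ is pro-$p$ and $p\in\L^\times$. The other potentially subtle point — that $\L[H]$ decomposes into local factors with connected spectrum — is purely algebraic and relies only on $|H|\in\L^\times$ together with $\L$ being local. Once these two preliminaries are in place, the rest of the proof is the standard diagonalization via primitive central idempotents.
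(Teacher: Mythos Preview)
Your proposal is correct and follows essentially the same approach as the paper. The paper does not give a self-contained proof of this lemma (it is cited from \cite{as rc} 6.7), but the explanatory paragraph immediately following the statement outlines precisely the argument you give: pass to a finite quotient $P_0$ of $P/\gkl^{r+}$, let $C_0$ be the image of $\grl$ (your $H$), observe via Theorem~\ref{center} that $C_0$ is central, and decompose $\spec(\L[C_0])$ into connected components corresponding to the characters $\chi$. Your use of primitive idempotents to realize the decomposition, and your observation that Maschke-type semisimplicity (using $p\in\L^\times$) is needed to pass from the invariants condition in Lemma~\ref{slope decom lemma} to triviality of the $\gkl^{r+}$-action, are the correct details to fill in.
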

The decomposition \eqref{cen char decom form} is called the {\it central character decomposition} of $M$. The characters $\c: \grl\ra\L^{\ti}_{\c}$ for which $M_{\c}\neq0$ are called the central characters of $M$ (\cite{as rc} 6.8).

Let $P_0$ be a finite discrete quotient of $P/\gkl^{r+}$ through which $P$ acts on $M$ and let $C_0$ be the image of $\grl$ in $P_0$. By \ref{center}, we know that $C_0$ is contained in the center of $P_0$. The connected components of $\spec(\L[C_0])$ correspond to the isomorphism classes of characters $\c:C_0\ra \L^{\ti}_{\c}$, where $\L_{\c}$ is finite \'etale $\L$-algebra, generated by the image of $\c$, and having a connected spectrum. If $p^nC=0$, and $\L$ contains a primitive $p^n$-th root of 1, then $\L_{\c}=\L$ for every $\c$ such that $M_{\c}\neq 0$.

\begin{lemma}[\cite{katz} 1.4, \cite{as rc} 6.10]\label{slope center decom p to 0}
Let $A$ be a $\L$-algebra and $M$ a left $A$-module on which $P$ acts $A$-linearly through a finite discrete quotient. Then,
\begin{itemize}
\item[(i)]
In the slope decomposition $M=\bp_r M^{(r)}$, each $M^{(r)}$ is a sub-$A$-module of $M$. For any $A$-algebra $B$, the decomposition of $B\ot_A M$ is given by $B\ot_A M=\bp_r(B\ot_A M{(r)})$.
\item[(ii)]
If $M$ is isoclinic, then in the central character decomposition $M=\bp_{\c}M_{\c}$, each $M_{\c}$ is a sub-$A$-module of $M$. For any $A$-algebra $B$, the central character decomposition of $B\ot_A M$ is given by $B\ot_A M=\bp_{\c}(B\ot_A M_{\c})$.
\end{itemize}
\end{lemma}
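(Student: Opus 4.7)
The plan is to realize both decompositions as being cut out by central idempotents in the group algebra of a finite quotient of $P$; once this is achieved, $A$-linearity and base-change compatibility follow formally. Let $P_0$ be a finite discrete quotient of $P$ through which the action on $M$ factors, making $M$ a $\L[P_0]$-module. Because $P$ is pro-$p$, $P_0$ is a finite $p$-group, and $|P_0|$ is invertible in $\L$ (as $\L$ is a $\Z_{\ell}$-algebra with $\ell\neq p$).

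For (i), let $0=r_0<r_1<\cdots<r_n$ be the slopes of $M$ and, for each $i$, let $H_i\sube P_0$ be the image of $\gkl^{r_i+}$; it is a normal subgroup of $P_0$ because $\gkl^{r_i+}$ is normal in $G_K$. The element $f_i=\f{1}{|H_i|}\sum_{h\in H_i}h\in\L[P_0]$ is a central idempotent that projects any $\L[P_0]$-module onto its $H_i$-invariant subspace, and by the characterization of the slope decomposition in Lemma~\ref{slope decom lemma} one has $f_i M=\bp_{j\le i}M^{(r_j)}$. Setting $e_0=f_0$ and $e_i=f_i-f_{i-1}$ for $i\ge 1$ yields pairwise orthogonal central idempotents in $\L[P_0]$ with $e_iM=M^{(r_i)}$. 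Since $P$ acts $A$-linearly, every element of $\L[P_0]$ operates on $M$ as an $A$-endomorphism, so each $M^{(r_i)}=e_iM$ is an $A$-submodule of $M$. For any $A$-algebra $B$, the action of $P_0$ on $B\ot_A M$ through the second factor is $B$-linear, so $e_i(b\ot m)=b\ot e_im$, whence $e_i(B\ot_A M)=B\ot_A M^{(r_i)}$; because the $e_i$ depend only on $P_0$ and the logarithmic ramification subgroups, the same formulas show this is the slope decomposition of $B\ot_A M$.

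For (ii), assume $M$ is isoclinic of slope $r>0$ and let $C_0\sube P_0$ be the image of $\gkl^r$. Theorem~\ref{center} places $C_0$ in the center of $P_0$, and the remark following Lemma~\ref{center char decomp} identifies the connected components of $\spec(\L[C_0])$ with the central characters $\c$, yielding central idempotents $e_{\c}\in\L[C_0]\sube\L[P_0]$ with $e_{\c}M=M_{\c}$. Since each $e_{\c}$ again commutes with the $A$-action and depends only on group-theoretic data, both the $A$-stability of every $M_{\c}$ and the identification $B\ot_A M=\bp_{\c}(B\ot_A M_{\c})$ as the central character decomposition follow exactly as in (i). The only genuinely non-formal step in the whole argument is the construction of both decompositions from central idempotents in $\L[P_0]$; the rest is automatic, which is why the lemma is essentially a formal consequence of Lemmas~\ref{slope decom lemma} and \ref{center char decomp} together with Theorem~\ref{center}.
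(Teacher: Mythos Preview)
Your proof is correct and follows the standard idempotent argument, which is precisely what the cited references \cite{katz}~1.4 and \cite{as rc}~6.10 do; the paper itself provides no independent proof beyond these citations. One small point worth tightening: you index the idempotents $e_i$ by the slopes of $M$, so to conclude that $\bigoplus_i B\ot_A M^{(r_i)}$ is the \emph{full} slope decomposition of $B\ot_A M$ you should note that $H_n$, the image of $\gkl^{r_n+}$, acts trivially on $M$ and hence on $B\ot_A M$, so $f_n$ acts as the identity and the $e_i$ sum to an operator that is the identity on $B\ot_A M$---otherwise it is not immediately clear that no new slopes appear after base change.
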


\subsection{}\label{isogeny V}
Let $V$ be a finite dimensional $\ol F$-vector space and we denote by $V^*$ its dual space. We consider $V$ as a smooth abelian algebraic group over $\ol F$, i.e. $\spec (\sym(V^*))$. Let $\p_1^{\alg}(V)$ be the quotient of $\p_1^{\ab}(V)$ classifying \'etale isogenies. Then $\p_1^{\alg}(V)$ is a profinite group killed by $p$ and the group $\Hom(\p_1^{\alg}(V),\F_p)$ is canonical identified with the dual space $V^*$ by pulling-back the Lang's isogeny $\mathbb{A}^1\ra\mathbb{A}^1:\;t\ma t^p-t$ by linear forms (cf. \cite{wrcb} 1.19).

\subsection{}\label{iso}
For the rest of this section, we assume that $F$ is of finite type over a perfect subfield $F_0$. We define the $F$-vector space $\O^1_F(\log)$ by
\begin{equation*}
\O^1_F(\log)=(\O^1_{F/F_0}\op(F\ot_{\Z} K^{\ti}))/(\dr \bar a-\bar a\ot a;\;a\in\OK^{\ti}).
\end{equation*}
Then we have an exact sequence of finite dimensional $F$-vector spaces
\begin{equation} \label{Omegalog}
0\lra\O^1_F\lra\O^1_F(\log)\xra{\mathrm{res}}F\lra 0,
\end{equation}
where $\mathrm{res}((0,a\ot b))=a\cdot v(b)$ for $a\in F$ and $b\in K^{\ti}$. If $K$ has characteristic $p$, we put
\begin{equation*}
\what\O^1_{\OK/F_0}=\plim_n\O^1_{(\OK/\m_K^n)/F_0}.
\end{equation*}
We have an exact sequence of $F$-vector spaces
\begin{equation}\label{hatO exact p}
0\ra\m_K/\m_K^2\ra \what\O^1_{\OK/F_0}\ot_{\OK}F\ra\O^1_F\ra 0.
\end{equation}
If $K$ has characteristic zero and $p$ is not a uniformizer of $\OK$, we denote by $\cO_{K_0}$ the ring of Witt vectors $W(F_0)$ regarded as a sub-algebra of $\OK$. Then, we put
\begin{equation*}
\what\O^1_{\OK/\cO_{K_0}}=\plim_n\O^1_{(\OK/\m_K^n)/\cO_{K_0}}.
\end{equation*}
We have an exact sequence of $F$-vector spaces
\begin{equation}\label{hatO exact 0}
0\ra\m_K/\m_K^2\ra \what\O^1_{\OK/\cO_{K_0}}\ot_{\OK}F\ra\O^1_F\ra 0.
\end{equation}

For any rational number $r$, we put
\begin{equation*}
\m^r_{\ol K}=\{x\in \ol K\,|\,v(x)\geqslant r\},\ \ \ \m^{r+}_{\ol K}=\{x\in \ol K\,|\,v(x)> r\},
\end{equation*}
\begin{eqnarray}
\thlogr&=&\Hom_F\big(\O^1_F(\log), \m^r_{\ol K}/\m^{(r+)}_{\ol K}\big),\nonumber\\
\xir&=&\Hom_F\big(\O^1_F, \m^r_{\ol K}/\m^{(r+)}_{\ol K}\big)\label{xi}.
\end{eqnarray}
When $K$ has characteristic $p$ (resp. characteristic zero and $p$ is not a uniformizer of $\OK$), for any rational number $r>0$, we denote by $\thnonr$ the $\ol F$-vector space
\begin{equation}\label{thnon}
\thnonr=\Hom_F\big(\what\O^1_{\OK/F_0}\ot_{\OK}F, \m^r_{\ol K}/\m^{(r+)}_{\ol K}\big)
\end{equation}
\begin{equation*}
\big(\text{resp.}\ \ \ \thnonr=\Hom_F\big(\what\O^1_{\OK/\cO_{K_0}}\ot_{\OK}F, \m^r_{\ol K}/\m^{(r+)}_{\ol K}\big)\ \ \big).
\end{equation*}
By \eqref{Omegalog}, \eqref{hatO exact p} and \eqref{hatO exact 0}, when $p$ is not a uniformizer of $K$, we have homomorphisms
\begin{equation*}
\thlogr\ra\xir\ra\thnonr.
\end{equation*}

By (\cite{as ii} 5.12), we have a canonical surjection
\begin{equation}\label{pi1 gr}
\p_1^{\ab}(\thlogr)\ra\grl.
\end{equation}

\begin{theorem}[\cite{saito cc} 1.24,\cite{as iii} Th. 2]\label{isogeny}
For every rational number $r>0$, the canonical surjection \eqref{pi1 gr} factors through the quotient $\p_1^{\alg}(\thlogr)$. In particular, the abelian group $\grl$ is killed by $p$ and the surjection \eqref{pi1 gr} induces an injective homomorphism
\begin{equation}\label{rsw}
\rsw:\Hom(\grl, \F_p)\ra\Hom_{\ol F}(\m^r_{\ol K}/\m^{r+}_{\ol K}, \O^1_F(\log)\ot \ol F).
\end{equation}
\end{theorem}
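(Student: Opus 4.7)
The plan is to promote the canonical surjection $\pi_1^{\ab}(\thlogr) \twoheadrightarrow \grl$ from (\cite{as ii} 5.12) to an \'etale isogeny of commutative algebraic groups over $\ol F$, after which the two conclusions of the theorem follow formally by applying $\Hom(-,\F_p)$ and invoking the identification $\Hom(\pi_1^{\alg}(V),\F_p) \iso V^{*}$ from \ref{isogeny V}. Recall the geometric origin of the surjection: for each finite Galois extension $L/K$ whose logarithmic ramification is bounded by $r$, Abbes and Saito construct a rigid-analytic log-tubular neighborhood $X^r_{L/K,\log}$ equipped with a canonical morphism to an affinoid model of $\thlogr$, and the set of geometric connected components of $X^r_{L/K,\log}$ is a $\grl$-torsor over $\pi_0(\thlogr \ot \ol F)$. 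To factor this torsor through $\pi_1^{\alg}(\thlogr)$, I must upgrade it to an \'etale isogeny, i.e.\ exhibit it as the fiber of a homomorphism of commutative algebraic groups over $\ol F$.

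The main step, and the principal obstacle, is the construction of the requisite additive structure. I would argue by comparing, for two extensions $L,L'/K$ with logarithmic ramification bounded by $r$, the log-tubular neighborhood of the compositum $LL'/K$ with the fiber product $X^r_{L/K,\log} \times_{\thlogr} X^r_{L'/K,\log}$. After choosing a log-smooth embedding of $\spec(\OK)$ into a log-smooth scheme over $\cO_{K_0}$ (or over $F_0$ in the equal characteristic case) and writing the defining ideals of $\OL$ and $\cO_{L'}$ in coordinates rescaled by the $r$-th power of a uniformiser, a Taylor-expansion argument on the multiplication map should identify the two rigid spaces modulo $\m^{r+}_{\ol K}$, with leading term linear in the rescaled coordinates and valued in $\O^1_F(\log) \ot_F \m^r_{\ol K}/\m^{r+}_{\ol K}$. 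The crucial point---mirroring the non-logarithmic computation of (\cite{saito cc} 1.24)---is that independence of the chosen uniformiser $\p$ is precisely what forces the natural receiver to be $\O^1_F(\log)$ rather than $\O^1_F$; carrying out this bookkeeping rigorously in the presence of the log-structure is the technical heart of the argument.

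Once additivity is established, the morphism $\pi_0(X^r_{L/K,\log}) \to \pi_0(\thlogr \ot \ol F)$ is a homomorphism of commutative group schemes with finite \'etale kernel, hence an \'etale isogeny, and consequently the surjection of (\cite{as ii} 5.12) factors through $\pi_1^{\alg}(\thlogr) \twoheadrightarrow \grl$. Because $\thlogr$ is an $\ol F$-vector space, Lang's isogeny guarantees that $\pi_1^{\alg}(\thlogr)$ is killed by $p$, and hence so is $\grl$. Applying $\Hom(-,\F_p)$ to the factored surjection yields an injection $\Hom(\grl,\F_p) \hookrightarrow \Hom(\pi_1^{\alg}(\thlogr),\F_p)$, and by \ref{isogeny V} the latter is identified with $\Hom_{\ol F}(\m^r_{\ol K}/\m^{r+}_{\ol K}, \O^1_F(\log) \ot \ol F)$, producing the refined Swan conductor $\rsw$.
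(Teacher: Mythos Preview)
The paper does not contain a proof of this theorem: it is quoted as an external result from \cite{saito cc} 1.24 and \cite{as iii} Th.~2, with no argument given beyond the statement. Your proposal is therefore not being compared against anything in the present paper; it is a sketch of the proof in the cited references.

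As such a sketch, your outline is broadly faithful to the strategy of Saito's papers: one does upgrade the abelian torsor $\ol\fX^r_{L,\log,0}\to\thlogr$ to an \'etale isogeny of commutative algebraic $\ol F$-groups, after which the factorization through $\pi_1^{\alg}$ and the $p$-torsion of $\grl$ are formal, and $\rsw$ drops out via \ref{isogeny V}. However, your description of the ``main step'' is vague at precisely the point where the real work lies. In \cite{saito cc} (equal characteristic) the group structure on the special fiber is not obtained by comparing tubular neighborhoods of composita as you suggest, but by identifying $\ol\fX^r_{L,\log}$ itself with a twisted form of an additive group via an explicit computation with dilatations and the conormal sheaf of the diagonal; in \cite{as iii} (mixed characteristic) the argument is reduced to the equal-characteristic case via the field of norms functor. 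Your Taylor-expansion heuristic and the remark about $\O^1_F(\log)$ versus $\O^1_F$ capture the flavor but not the mechanism, and the compositum approach you propose would require substantial additional work to make rigorous. If you want to cite this result as the paper does, no proof is needed; if you want to reprove it, you should follow one of the two cited arguments rather than the variant you sketch.
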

The morphism \eqref{rsw} is called the {\em refined Swan conductor}.

\subsection{}\label{cc}
Let $M$ be a free $\L$-module of finite type on which $P$ acts $\L$-linearly through a finite discrete quotient. Let
\begin{equation*}
M=\bp_{r\in\Q_{\geqslant 0}}M^{(r)}
\end{equation*}
be the slope decomposition of $M$ and for each rational number $r>0$, let
\begin{equation*}
M^{(r)}=\bp_{\c\in X(r)}M^{(r)}_{\c}
\end{equation*}
be the central character decomposition of $M^{(r)}$. We notice that each $M^{(r)}_{\c}$ is a free $\L$-module. Enlarging $\L$, we may assume that for all rational number $r>0$ and $\c\in X(r)$, $\L=\L_{\c}$ (\ref{center char decomp}). Each $\c$ factors uniquely through (\ref{l L psi})
\begin{equation*}
\gr^r_{\log}G_K\ra\F_p\xra{\psi_0}\L^{\ti}.
\end{equation*}
We denote abusively by $\c$ the induced character $\gr^r_{\log}G_K\ra \F_p$.
We define the {\it Abbes-Saito characteristic cycle} $\cc_{\psi_0}(M)$ of $M$ by
\begin{equation}\label{cc formula}
\cc_{\psi_0}(M)=\bt_{r\in\Q_{> 0}}\bt_{\c\in X(r)}(\rsw(\c)\ot\p^{r})^{\dim_{\L} M^{(r)}_{\c}}\in (\O^1_F(\log)\ot_{F}\ol F)^{\ot \dim_{\L}M/M^{(0)}}.
\end{equation}

\section{Ramification of extensions of type (II)}
\subsection{}\label{ram type ii notation}
In this section, we assume that the residue field $F$ of $\OK$ is of finite type over a perfect field $F_0$ of characteristic $p$. Let $L$ be a finite Galois extension of $K$ of group $G$ and type (II) (\ref{type}), $\OL$ the integer ring of $L$ and $E$ the residue field of $\OL$. We denote by $p^n$ the degree of the residue extension $E/F$. We choose an element $h\in\OL$ such that its residue class $\bar h\in E$ is a generator of $E/F$. We have $\OL=\OK[h]$. Let $f(T)\in\OK[T]$ be the minimal polynomial of $h$:
\begin{equation}\label{f}
f(T)=T^{p^n}+a_{p^n-1}T^{p^n-1}+\cdots+a_0.
\end{equation}
Notice that $\bar a_0=\bar h^{p^n}\in F$. We put
\begin{equation}\label{conductor}
c=\sup_{\s\in G-\{1\}}v(h-\s(h))+\sum_{\s\in G-\{1\}} v(h-\s(h)),
\end{equation}
which is an integer $\geqslant p^n$.

For any rational number $r\geqslant0$, we denote by $G^r$ (resp. $G^r_{\log})$ the image of $G^r_K$ (resp. $G_{K,\log}^r$) in $G$ (\cite{as i} 3.1). Using the monogenic presentation $\OL=\OK[T]/(f(T))$, we obtain that, for any rational number $r>1$, $G^r=G^r_{\log}$(\cite{as i} 3.1, 3.2) and that the conductor of $L/K$ is $c$ (\cite{as i} 6.6). By \ref{isogeny}, the normal subgroup $G^c$ of $G$ is commutative and killed by $p$. In the following, we put $\sharp G^{c}=p^s$.

\subsection{}\label{ram type ii notation2}
For any integer $j\geqslant1$, we denote by $D^j$ the $j$-dimensional closed poly-disc of radius one over $K$ and by $\mathring D^j$ the $j$-dimensional open disc of radius one over $K$. For a rational number $r\geqslant 0$, the $j$-dimensional closed poly-disc of radius $r$ is denoted by $D^{j,(r)}=\{(x_1,...,x_j)\in D^j|v(x_i)\geqslant r\}$. Let
\begin{equation*}
\tilde f: D^1\ra D^1,\ \ \ x\ma f(x),
\end{equation*}
be the morphism induced by $f$.
For any rational number $r\geqslant0$, it is easy to see that $\tilde f^{-1}(D^{1,(r)})$ is a disjoint union of closed discs with the same radius, i.e. there exists a rational number $\r(r)\geqslant0$ such that
\begin{equation*}
\tilde f^{-1}(D^{1,(r)})=\coprod_{1\leqslant j\leqslant i}\lt(x_j+D^{1,(\r(r))}\rt),
\end{equation*}
where the $x_j$'s are zeros of $f$.
The function $\r:\Q_{\geqslant 0}\ra\Q_{\geqslant 0}$ is called the {\em Herbrand function} of the extension $L/K$. By (\cite{as ii} 6.6), we have $\r(c)=\sup_{\s\in G-\{1\}}v(h-\s(h))$ and
\begin{equation}\label{property Gc}
G^c=\{\s\in G;v(h-\s(h))\geqslant\r(c)\}.
\end{equation}

\subsection{}\label{u}
We denote by $u$ the map
\begin{equation}\label{def u}
u:G\ra E, \ \ \ \s\ma\lt\{\begin{array}{ll}
 u_{\s}=\ol{\lt(\f{h-\s(h)}{\p^{v(h-\s(h))}}\rt)},&\text{if}\ \ \s\neq 1,\\
u_{\s}=0,&\text{if}\ \ \s=1.
\end{array}\rt.
\end{equation}
The restriction $u|_{G^c}:G^c\ra E$ of $u$ to $G^c$ is an injective homomorphism of groups. Indeed, for any $\s\in G^c-\{1\}$, we have $v(h-\s(h))=\r(c)$. Hence, for $\s_1,\s_2\in G^c$, we have
\begin{eqnarray}
u_{\s_1\s_2}=\ol{\lt( \f{h-\s_1\s_2(h)}{\p^{\r(c)}}       \rt)}
=\ol{\lt( \f{h-\s_1(h)+\s_1(h-\s_2(h))}{\p^{\r(c)}}       \rt)}=u_{\s_1}+u_{\s_2}.\nonumber
\end{eqnarray}

\begin{proposition}\label{isogeny f}
The polynomial $f_{c}(T)=f(\p^{\r(c)}T+h)/\p^{c}\in L[T]$ has integral coefficients. Its reduction $\ol {f_{c}}\in E[T]$ is an additive polynomial of degree $p^s=\sharp G^{c}$ with a non-zero linear term.

\end{proposition}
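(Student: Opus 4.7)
The plan is to factor $f(T)=\prod_{\s\in G}(T-\s(h))$, perform the substitution $T\mapsto \p^{\r(c)}T+h$, and analyze the mod-$\p$ reduction of each factor via the map $u$ introduced in \ref{u}.

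First, writing $v_\s=v(h-\s(h))$ for $\s\ne 1$ and using the identity $c=\r(c)+\sum_{\s\in G-\{1\}}v_\s$ from \eqref{conductor}, I would rewrite
\[
f_c(T)=T\cdot\prod_{\s\in G-\{1\}}\left(\p^{\r(c)-v_\s}T+\frac{h-\s(h)}{\p^{v_\s}}\right).
\]
Since $v_\s\le\r(c)$ and $(h-\s(h))/\p^{v_\s}$ is a unit in $\OL$, each factor already lies in $\OL[T]$, which proves that $f_c$ has integral coefficients.

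Next, I would reduce modulo $\p$. By \eqref{property Gc}, for $\s\in G^c-\{1\}$ one has $v_\s=\r(c)$, so the corresponding factor reduces to $T+u_\s$; for $\s\notin G^c$ one has $v_\s<\r(c)$, so the factor reduces to the unit $u_\s\in E^\ti$. Since the leading $T$ corresponds to $\s=1$ with $u_1=0$, collecting all contributions yields
\[
\bar f_c(T)=\left(\prod_{\s\in G-G^c}u_\s\right)\cdot\prod_{\s\in G^c}(T+u_\s)\in E[T].
\]

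Finally, by \ref{u} the map $u|_{G^c}\colon G^c\hookrightarrow(E,+)$ is an injective group homomorphism, so its image $W$ is an $\F_p$-subspace of $E$ of cardinality $p^s$. Because $-W=W$, we have $\prod_{\s\in G^c}(T+u_\s)=\prod_{w\in W}(T-w)$, which is the classical $\F_p$-linear polynomial vanishing on $W$; it is additive of degree $p^s$ and separable, so its linear coefficient is non-zero. Multiplying by the unit $\prod_{\s\in G-G^c}u_\s\in E^\ti$ preserves additivity and the non-vanishing of the linear term, which yields the proposition. The argument is essentially a direct computation; the only conceptual point—and the place where the specific hypotheses of type (II) are used—is the recognition, via \ref{u} and \eqref{property Gc}, that $G^c$ is identified with an $\F_p$-subspace of $E$, which is precisely what produces the additive structure on $\bar f_c$.
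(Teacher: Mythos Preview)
Your proof is correct and follows essentially the same route as the paper: factor $f$ over its Galois conjugates, substitute and divide by $\p^c$ using $c=\r(c)+\sum_{\s\ne 1}v_\s$, then reduce modulo $\p$ to obtain $\bar f_c(T)=\big(\prod_{\s\in G-G^c}u_\s\big)\prod_{\s\in G^c}(T+u_\s)$. The only difference is cosmetic: where you invoke the classical fact that $\prod_{w\in W}(T-w)$ is an additive separable polynomial for $W\subset E$ an $\F_p$-subspace, the paper states and proves this explicitly as a separate lemma (by induction on $\dim_{\F_p}W$, using $g_{r-1}^p(T)-g_{r-1}^{p-1}(x_r)g_{r-1}(T)$).
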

\begin{proof}
We have
\begin{equation*}
f_{c}(T)=T\prod_{\s\in G-\{1\}}\f{\p^{\r(c)}T+h-\s(h)}{\p^{v(h-\s(h))}}\in \OL[T].
\end{equation*}
Hence
\begin{equation}\label{f_g}
\ol {f_{c}}(T)=T\prod_{\s\in G-\{1\}}\ol{\lt(\f{\p^{\r(c)}T+h-\s(h)}{\p^{v(h-\s(h))}}\rt)}=\prod_{\s\in G-G^{c}}u_{\s}\cdot\prod_{\s\in G^{c}}\lt(T+u_{\s}\rt).
\end{equation}
Choose an $\F_p$-basis $\t_1,...,\t_s$ of $G^{c}$, we get
\begin{eqnarray*}
\prod_{\s\in G^{c}}\lt(T+u_{\s}\rt)&=&\prod_{(j_1,...,j_s)\in \F^s_p}(T+j_1u_{\t_{1}}+\cdots+j_s u_{\t_{s}}).
\end{eqnarray*}
We conclude by the lemma below.
\end{proof}

\begin{lemma}\label{isogeny lemma}
Let $C$ be a field of characteristic $p$. For any integer $r\geqslant0$, let $x_1,...,x_r$ be $r$ elements of $C$ such that for any $(j_1,...,j_r)\in \F^n_p-\{0\}$, $j_1x_1+\cdots+j_rx_r\neq 0$. Then we have
\begin{equation}\label{math ind}
\prod_{(j_1,...,j_r)\in \F^n_p}(T+j_1x_1+\cdots+j_rx_r)=T^{p^r}+\l_{r-1}T^{p^{r-1}}+\cdots+\l_1T^p+\l_0T\in C[T],
\end{equation}
where
\begin{equation*}
\l_0=\prod_{(j_1,...,j_r)\in \F^r_p-\{0\}}(j_1x_1+\cdots+j_rx_r)\neq 0.
\end{equation*}
\end{lemma}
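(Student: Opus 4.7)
Denote the polynomial in question by $P_r(T)\in C[T]$. I would prove the statement by induction on $r$, exploiting the observation that the polynomial produced at rank $r-1$ is $\F_p$-linear (additive), so that the rank-$r$ product collapses, by grouping its factors according to the last coordinate $j_r$, to a single application of the classical identity $\prod_{j\in\F_p}(U-j)=U^p-U$. For the base case $r=1$ (the case $r=0$ being vacuous), the hypothesis is that $x_1\neq 0$, and setting $U=T/x_1$ gives
\begin{equation*}
P_1(T)=\prod_{j\in\F_p}(T+jx_1)=x_1^p\bigl((T/x_1)^p-T/x_1\bigr)=T^p-x_1^{p-1}T,
\end{equation*}
which is additive; its coefficient of $T$ is $-x_1^{p-1}=(p-1)!\,x_1^{p-1}=\prod_{j\in\F_p^{\ti}}(jx_1)$ by Wilson's theorem.

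For the inductive step, let $Q(T):=P_{r-1}(T)$ be the polynomial at rank $r-1$ built from $x_1,\ldots,x_{r-1}$. By induction $Q(T)=T^{p^{r-1}}+\mu_{r-2}T^{p^{r-2}}+\cdots+\mu_0T$ is additive with $\mu_0\neq 0$. Grouping the factors of $P_r(T)$ by the value of $j_r$ and using additivity of $Q$,
\begin{equation*}
P_r(T)=\prod_{j_r\in\F_p}Q(T+j_rx_r)=\prod_{j_r\in\F_p}\bigl(Q(T)+j_rQ(x_r)\bigr).
\end{equation*}
Set $z:=Q(x_r)\in C$; each factor of $Q(x_r)$ has the form $x_r+j_1x_1+\cdots+j_{r-1}x_{r-1}$, corresponding to the nonzero tuple $(j_1,\ldots,j_{r-1},1)\in\F_p^r$, so $z\neq 0$ by hypothesis. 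Substituting $y=Q(T)$ for $T$ and $z$ for $x_1$ into the base-case identity (a legitimate substitution, since that identity holds as an identity of polynomials over $C$) gives
\begin{equation*}
P_r(T)=Q(T)^p-z^{p-1}Q(T).
\end{equation*}

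In characteristic $p$ the Frobenius distributes over sums, so $Q(T)^p=T^{p^r}+\mu_{r-2}^pT^{p^{r-1}}+\cdots+\mu_0^pT^p$ is again additive, and therefore so is $P_r(T)$, with leading term $T^{p^r}$ and every intermediate exponent a power of $p$, as claimed. Its coefficient of $T$ equals $-z^{p-1}\mu_0\neq 0$; to identify this explicitly with the asserted product formula, substitute $T=0$ into the obvious factorization $P_r(T)=T\cdot\prod_{(j_1,\ldots,j_r)\in\F_p^r-\{0\}}(T+j_1x_1+\cdots+j_rx_r)$ and read off $\l_0=\prod_{(j_1,\ldots,j_r)\in\F_p^r-\{0\}}(j_1x_1+\cdots+j_rx_r)$, each of whose factors is nonzero by hypothesis. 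The only point needing a moment of care is the reduction inside the inductive step — that additivity of $Q$ really does turn the rank-$r$ product into exactly $p$ shifts of $Q(T)$ by multiples of $z$ — but once that observation is made, the rest is formal.
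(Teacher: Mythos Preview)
Your proof is correct and follows essentially the same approach as the paper's: both argue by induction on $r$, group the rank-$r$ product by the last coordinate $j_r$, use the additivity of the rank-$(r-1)$ polynomial to rewrite each factor as $Q(T)+j_rQ(x_r)$, and then apply the base-case identity to obtain $Q(T)^p-Q(x_r)^{p-1}Q(T)$. Your write-up is in fact slightly more detailed than the paper's (you explicitly check $Q(x_r)\neq 0$ and identify $\lambda_0$ by evaluating the cofactor at $T=0$), but the argument is the same.
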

\begin{proof}
We proceed by induction on $r$. If $r=1$,
\begin{equation*}
\prod_{j_1\in \F_p}(T+j_1x_1)=T^p-x_1^{p-1}T,
\end{equation*}
which satisfies \eqref{math ind}. Assume that \eqref{math ind} holds for $(r-1)$-tuples where $r\geqslant 2$, let $(x_1,...,x_r)\in C^r$ be as in the lemma. We put
\begin{equation*}
g_{r-1}(T)=\prod_{(j_1,...,j_{r-1})\in \F^{r-1}_p}(T+j_1x_1+\cdots+j_{r-1}x_{r-1}).
\end{equation*}
Then, we have
\begin{eqnarray*}
\prod_{(j_1,...,j_r)\in \F^r_p}(T+j_1x_1+\cdots+j_rx_r)&=&\prod_{j_r\in\F_p}(g_{r-1}(T+j_r x_r))\\
 &=&\prod_{j_r\in\F_p}(g_{r-1}(T)+j_r g_{r-1}(x_r))\\
 &=&g^p_{r-1}(T)-g^{p-1}_{r-1}(x_r)g_{r-1}(T),
\end{eqnarray*}
which satisfies \eqref{math ind} since $g_{r-1}$ does.
\end{proof}

In the following of this section, we assume that $p$ {\em is not a uniformizer of} $K$.

\begin{lemma}\label{p^2}
Suppose $c>2$. Then, for any $1\leqslant i \leqslant p^n-1$, we have $v(a_i)\geqslant 2$ \eqref{f}.
\end{lemma}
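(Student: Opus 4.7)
The plan is to work in the quotient $\OL/\m_L^2$, which (since $L/K$ has ramification index one) is a free $\OK/\m_K^2$-module of rank $p^n$ with basis $1,h,h^2,\ldots,h^{p^n-1}$, and to extract the valuations of the $a_i$ by comparing coefficients in this basis. Starting from $f(T)=\prod_{\s\in G}(T-\s(h))$ I write $\s(h)=h-\b_\s$ with $\b_\s=h-\s(h)\in\m_L$ for $\s\neq 1$. Since any two $\b$'s have valuations $\ge 1$, the products $\b_\s\b_{\s'}$ all lie in $\m_L^2$, so for every $k$-subset $S\subset G$:
\[
\prod_{\s\in S}\s(h)\equiv h^k - h^{k-1}\sum_{\s\in S}\b_\s\pmod{\m_L^2}.
\]
Summing over the $S$ of size $k$ and noting that each $\s\in G$ belongs to $\binom{p^n-1}{k-1}$ such subsets yields, for $1\le k\le p^n-1$,
\[
e_k(\s(h):\s\in G)\equiv \binom{p^n}{k}h^k - \binom{p^n-1}{k-1}h^{k-1}\sum_{\s\in G}\b_\s\pmod{\m_L^2}.
\]

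Next I will simplify using the standing hypothesis that $p$ is not a uniformizer of $K$: either $\ch K=p$ (so $p=0$) or $v(p)\ge 2$. In both cases $p^n h\in\m_L^2$, and hence $\sum_\s\b_\s = p^n h-\Tr_{L/K}(h)\equiv -\Tr_{L/K}(h)\pmod{\m_L^2}$. Likewise Kummer gives $v_p\!\binom{p^n}{k}\ge 1$ for $1\le k\le p^n-1$, so $\binom{p^n}{k}h^k\in\m_L^2$; while Lucas applied to $p^n-1=(p-1,\ldots,p-1)_p$ shows that $\binom{p^n-1}{k-1}$ is a unit in $\OK$. Assembling these ingredients, the identity $a_{p^n-k}=(-1)^k e_k$ becomes
\[
a_{p^n-k}\equiv (-1)^k\binom{p^n-1}{k-1}\,\Tr_{L/K}(h)\,h^{k-1}\pmod{\m_L^2}\qquad(1\le k\le p^n-1).
\]

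The punchline is then a coordinate comparison in the basis $\{1,h,\ldots,h^{p^n-1}\}$: the left-hand side $a_{p^n-k}\in\OK$ touches only the ``$1$''-slot, while the right-hand side touches only the ``$h^{k-1}$''-slot. When $p^n\ge 3$ I can choose $k\in\{2,\ldots,p^n-1\}$ so that these slots are distinct, and equating the two expansions forces simultaneously $a_{p^n-k}\in\m_K^2$ and $\Tr_{L/K}(h)\in\m_K^2$. Letting $k$ range, together with $a_{p^n-1}=-\Tr_{L/K}(h)$, yields $v(a_i)\ge 2$ for every $1\le i\le p^n-1$; in this regime the hypothesis $c>2$ is automatic since $c\ge p^n\ge 3$.

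The only remaining case is $p^n=2$ (so $p=2,n=1$), where the coordinate comparison above is vacuous and $c>2$ does real work. Here $f(T)=T^2+a_1T+a_0$, and Vieta gives $\b_\s=h-\s(h)=2h+a_1$. By $v(2)\ge 2$ (or $p=0$) this reduces to $\b_\s\equiv a_1\pmod{\m_L^2}$, so $v(\b_\s)\ge 2$ iff $v(a_1)\ge 2$. Since $c=2\,v(\b_\s)$, the assumption $c>2$ forces $v(\b_\s)\ge 2$ and hence $v(a_1)\ge 2$. The main obstacle I anticipate is organising these two regimes coherently — the coordinate argument becomes non-trivial only once at least two basis vectors $h^{k-1}$ are available, so $p^n\ge 3$ and $p^n=2$ must be treated separately — but the hypothesis $c>2$ precisely picks out the configuration needed in the degenerate case.
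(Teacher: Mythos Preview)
Your proof is correct, and it begins exactly as the paper's does: expand $a_{p^n-k}=(-1)^k e_k(\s(h))$ to first order in the $\b_\s=h-\s(h)$, and use that $p$ is not a uniformizer to kill the $\binom{p^n}{k}h^k$ term modulo $\m_L^2$. The divergence comes at the key step of controlling $\sum_\s\b_\s$. The paper proves directly that $v\big(\sum_\s\b_\s\big)\ge 2$ by a ramification-theoretic case split: if $G=G^c$ it uses the additivity of the map $u:G^c\to E$, $\s\mapsto\ol{\b_\s/\p^{\r(c)}}$, to compute $\sum_\s u_\s=0$; otherwise it peels off the bottom filtration step $G^{c'+}$ and counts cosets. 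Your argument instead rewrites $\sum_\s\b_\s\equiv -\Tr_{L/K}(h)\pmod{\m_L^2}$ and then exploits the freeness of $\OL/\m_L^2$ over $\OK/\m_K^2$ with basis $1,h,\dots,h^{p^n-1}$: the congruence $a_{p^n-k}\equiv(\text{unit})\cdot\Tr_{L/K}(h)\cdot h^{k-1}$ equates elements supported on distinct basis vectors once $k\ge 2$, forcing both to vanish. This linear-algebra trick is more elementary and self-contained---it sidesteps the filtration structure entirely---while the paper's argument ties the lemma more visibly to the map $u$ and the graded pieces that drive the rest of the section. Your case split ($p^n\ge 3$ versus $p^n=2$) is different in character from the paper's ($G=G^c$ versus not), but both are forced by the same degeneracy, and your handling of the $p^n=2$ case via $c=2v(\b_\s)$ is clean.
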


\begin{proof}
From the equation $f(T)=\prod_{\s\in G}(T-\s(h))$, for any $1\leqslant i\leqslant p^n-1$, we obtain
\begin{eqnarray}\label{decomp a}
a_i&=&(-1)^{(p^n-i)}\sum_{\{\s_1,...,\s_{p^n-i}\}\sube G} \s_{1}(h)\s_{2}(h)\cdots\s_{p^n-i}(h)\\\nonumber
 &=&(-1)^{(p^n-i)}\sum_{\{\s_1,...,\s_{p^n-i}\}\sube G}(\s_{1}(h)-h+h)\cdots(\s_{p^n-i}(h)-h+h)\\\nonumber
 &=&(-1)^{(p^n-i)}\lt( {p^n\choose i} h^{p^n-i}+{p^n-1\choose i} h^{p^n-i-1}\sum_{\s\in G}(\s(h)-h)+\Delta            \rt),
\end{eqnarray}
where $v(\Delta)\geqslant 2$. Since the integer ${p^n\choose i}$ is divided by $p$, $v({p^n\choose i}h^{p^n})\geqslant 2$. Hence it is sufficient to show that
\begin{equation*}
v\Bigg(\sum_{\s\in G}(\s(h)-h)\Bigg)\geqslant 2.
\end{equation*}

Assume first that for any $\s\in G-\{1\}$, $v(h-\s(h))=\r(c)$, i.e. $G=G^{c}$. It suffices to treat the case where $\r(c)=1$. In this case, $\sharp G=c>2$ \eqref{conductor}. From \ref{ram type ii notation}, $G$ is an $\F_p$-vector space of dimension $n$ and we choose an $\F_p$-basis $\t_1,...,\t_n$ of $G$. By \ref{u}, we have
\begin{eqnarray*}
\sum_{\s\in G}u_{\s}&=&\sum_{(j_1,...,j_n)\in \F^n_p}(j_1u_{\t_1}+\cdots+j_nu_{\t_n})\\
 &=&\f{p^n(p-1)}{2}(u_{\t_1}+\cdots+u_{\t_n})=0,
\end{eqnarray*}
which means that $v(\sum_{\s\in G-\{1\}}(\s(h)-h))\geqslant \r(c)+1= 2$. 

Assume next that for $\s\in G-\{1\}$, the $v(h-\s(h))$'s are not equal. Let $c'$ be the smallest jump of the ramification filtration of $G$ and let $\sharp(G^{c'+})=p^{n'}$ for some integer $n'<n$. Let $\varsigma_1=1,\varsigma_2,...,\varsigma_{p^{n-n'}}$ be liftings of all the elements of $G/G^{c'+}$ in $G$. Observe that for any $\varsigma\in G-G^{c'+}$ and $\s\in G^{c'+}$, we have $u_{\varsigma\s}=u_{\varsigma}$. Hence
\begin{equation*}
\sum_{\varsigma\in G-G^{c'+}} u_{\varsigma}=\sum_{j=2}^{p^{n-n'}} \sum_{\s\in G^{c'+}}u_{\varsigma_j}=p^{n'}\sum_{j=2}^{p^{n-n'}}u_{\varsigma_j}=0.
\end{equation*}
Hence $v(\sum_{\s\in G-G^{c'+}}(\s(h)-h))\geqslant 2$. Meanwhile, $v(\sum_{\s\in G^{c'+}}(\s(h)-h))\geqslant 2$, hence we obtain the inequality $v(\sum_{\s\in G}(\s(h)-h))\geqslant 2$.
\end{proof}

\begin{proposition}\label{thlog factor xi}
The composition of the canonical homomorphisms (\ref{isogeny})
\begin{equation*}
\p^{\alg}_1(\thlogc)\ra  \gr^c_{\log}G_K  \ra G^c
\end{equation*}
 factors through $\p^{\alg}_1(\xic)$ \eqref{xi}. In particular, for any non-trivial character $\c:G^c\ra \F_p$, we have $\rsw(\c)\in \O^1_F\ot_{F}\m^{-c}_{\ol K}/\m^{-c+}_{\ol K}$.
\end{proposition}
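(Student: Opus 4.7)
The plan is to exploit the explicit polynomial description of the extension $L/K$ together with the Abbes-Saito isogeny machinery of Theorem \ref{isogeny}. By that construction, the map $\p_1^{\alg}(\thlogc) \ra G^c$ is realized by an isogeny of smooth additive $\ol F$-algebraic groups; I would identify this isogeny, using the monogenic presentation $\OL = \OK[T]/(f(T))$, with the additive polynomial $\ol{f_c} \in E[T]$ of Proposition \ref{isogeny f} (obtained by reducing $f_c(T) = f(\p^{\r(c)}T + h)/\p^c$ modulo $\p$), whose kernel is identified with $G^c$ via the map $u: G^c \hookrightarrow E$ of \ref{u}. In this picture, the problem becomes showing that $\ol{f_c}$ only depends on the $\O^1_F$-part of $\O^1_F(\log)$.

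To make this precise, I would dualize the exact sequence $0 \ra \O^1_F \ra \O^1_F(\log) \ra F \ra 0$ by applying $\Hom_F(-, \m^c_{\ol K}/\m^{c+}_{\ol K})$, obtaining a short exact sequence of $\ol F$-vector spaces
\[
0 \ra \m^c_{\ol K}/\m^{c+}_{\ol K} \ra \thlogc \ra \xic \ra 0,
\]
in which the kernel represents the ``logarithmic direction'' coming from $\dr\log \p$. The factorization assertion is then equivalent to the character $\p_1^{\alg}(\thlogc) \ra G^c \ra \F_p$ being trivial on $\p_1^{\alg}(\m^c_{\ol K}/\m^{c+}_{\ol K})$, i.e., to $\rsw(\c) \in \O^1_F(\log)\ot\m^{-c}_{\ol K}/\m^{-c+}_{\ol K}$ annihilating the residue of $\dr\log \p$. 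On the geometric side, this logarithmic direction corresponds to replacing $\p$ by $\p(1+\p^{c-1}u)$ on the tubular neighborhood, which amounts to shifting the chosen lift $h$ by an element of $\p^c\cO_{\ol K}$.

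The key computational step is then to show that such a shift leaves $\ol{f_c}$ unchanged. Writing $h' = h + \p^c u$ and substituting into $f_c$, the coordinate change $T \ma T + \p^{c-\r(c)} u$ absorbs the shift of $h$. Since $c - \r(c) = \sum_{\s \in G - \{1\}} v(h - \s(h)) \geqslant 1$, this translation is by an element of strictly positive valuation and is therefore trivial modulo $\p$. Consequently $\ol{f_c}$ is invariant under the logarithmic direction, the induced character factors through $\p_1^{\alg}(\xic)$, and the containment $\rsw(\c) \in \O^1_F \ot \m^{-c}_{\ol K}/\m^{-c+}_{\ol K}$ for every non-trivial $\c:G^c\ra\F_p$ follows.

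The main obstacle is the precise identification in the middle of this argument: matching the abstract logarithmic direction $\m^c_{\ol K}/\m^{c+}_{\ol K} \sub \thlogc$ with the $\p^c$-shift of $h$ on the Abbes-Saito tubular neighborhood built from the monogenic presentation. This requires careful bookkeeping in the construction underlying Theorem \ref{isogeny}. The role of Lemma \ref{p^2} appears in controlling the Taylor expansion of $f$ around $h$, ensuring that the contributions coming from the coefficients $a_i$ ($1\le i \le p^n-1$) under the substitution $h \ma h + \p^c u$ do not survive modulo $\p^{c+1}$; this is precisely what requires $c > 2$ together with the standing hypothesis that $p$ is not a uniformizer of $K$, and the low-conductor cases $c \leqslant 2$ should be addressed by a separate direct calculation using that then $G^c$ is of small order.
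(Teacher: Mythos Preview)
Your approach differs from the paper's and contains a gap at exactly the point you flag as ``the main obstacle.''

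The paper does not work directly on $\thlogc$ and try to kill the logarithmic direction. Instead it uses the \emph{non-logarithmic} space $\thnonc$ (built from $\what\O^1_{\OK/F_0}\ot_{\OK}F$ or its mixed-characteristic analogue), for which there is a natural chain of maps $\thlogc\twoheadrightarrow\xic\hookrightarrow\thnonc$. In \S7--\S8 the paper constructs the $G^c$-torsor $\ol\fX^c_{L,0}\to\thnonc$ explicitly and shows it is the pull-back of the additive isogeny $\ol{f_c}:\A^1_{\ol F}\to\A^1_{\ol F}$ along a linear form $\mu$ with $\mu^*(\x)=\dr a_0\ot\p^{-c}$ (or $(a''_1h\dr\p+\dr a_0)\ot\p^{-2}$ when $c=2$). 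The point is that the image of this linear form in $\xic$ is $\dr\bar a_0\ot\p^{-c}$, which is \emph{non-zero}; hence the fibre product $\ol\fX^c_{L,0}\ti_{\thnonc}\xic$ is still connected (Proposition \ref{isogeny xi}), so it defines a surjection $\p^{\alg}_1(\xic)\to G^c$. A commutative diagram from \cite{as ii} 5.13 linking $\p^{\ab}_1(\thlogc)$ and $\p^{\ab}_1(\thnonc)$ then yields the factorization through $\p^{\alg}_1(\xic)$ immediately. Lemma \ref{p^2} enters only in the construction over $\thnonc$, to ensure that for $c>2$ the coefficients $a_i$ ($1\le i\le p^n-1$) contribute terms in $\p^{c+1}\cR^c_{K,\OK}$ and hence vanish on the special fibre.

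Your proposed route could in principle be made to work, but the step ``replacing $\p$ by $\p(1+\p^{c-1}u)$ amounts to shifting the chosen lift $h$ by an element of $\p^c\cO_{\ol K}$'' is not justified and is not obviously correct: the logarithmic direction in $\thlogc$ is dual to the residue of $\dr\log\p$ and records deformation of the \emph{uniformizer}, whereas $h$ is the monogenic generator of $\OL/\OK$; these are independent coordinates in the tubular neighborhood. Your subsequent computation (translating $T\mapsto T+\p^{c-\r(c)}u$ and observing $c-\r(c)\ge 1$) correctly shows $\ol{f_c}$ is insensitive to shifts of $h$, but that is not the invariance you need. Establishing the correct geometric interpretation of the $\dr\log\p$-direction on the Abbes--Saito logarithmic tubular neighborhood would require unwinding the log-structure in that construction, which is more work than the paper's detour through $\thnonc$.
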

The proof of this proposition is given in \ref{proof of proposition}.

\subsection{}\label{notation for rsw}
For a non-trivial character  $\c:G^{c}\ra \F_p$, we denote by $\bar f_{c,\c}(T)$ the polynomial (\ref{u})
\begin{equation}\label{notation ker c}
\bar f_{c,\c}(T)=\prod_{\s\in \ker\c}(T+u_{\s})\in \ol F[T],
\end{equation}
and by $\t\in G^{c}$ a lifting of $1\in \F_p$. Recall that $\bar f_{c,\c}$ is an additive polynomial with a non-zero linear term (\ref{isogeny lemma}), and that $\bar f_{c,\c}(u_{\t})$ is independent of the choice of $\t$.

\begin{theorem}\label{theorem rsw}
For any non-trivial character $\c:G^c\ra \F_p$, the refined Swan conductor $\rsw(\c)$ is given by
\begin{equation*}
\rsw(\c)=-\dr \bar a_0\ot\f{\p^{-c}}{\lt(\prod_{\s\in G-G^{c}}u_{\s}\rt)\bar f_{c,\c}^p(u_{\t})}\in \O^1_F\ot_{F}\m^{-c}_{\ol K}/\m^{-c+}_{\ol K}.
\end{equation*}
\end{theorem}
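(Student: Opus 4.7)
The plan is to make the Abbes-Saito construction of $\rsw(\c)$ explicit using the monogenic presentation $\OL=\OK[T]/(f(T))$ together with the additive polynomial $\bar f_c$ of Proposition \ref{isogeny f}. Proposition \ref{thlog factor xi} already places $\rsw(\c)$ in $\O^1_F\ot_F\m^{-c}_{\ol K}/\m^{-c+}_{\ol K}$, i.e.\ as a linear form on $\xic$; by the Lang-isogeny interpretation of Theorem \ref{isogeny}, $\c$ corresponds to an Artin-Schreier covering of $\xic$, and $\rsw(\c)$ is the linear form appearing in its defining equation $Y^p-Y=\rsw(\c)$. The geometric input from sections $\S7$--$\S8$ is that the special fiber of the tubular neighborhood of $\spec(\OL)$ at level $c$, obtained by the dilatation $T_0=\p^{\r(c)}T+h$, is the affine line $\spec(\ol F[T])$ on which $G^c$ acts by translations $T\mapsto T-u_\s$ (cf.~\ref{u}), and on which the $G^c$-quotient map $\A^1_{\ol F}\to\A^1_{\ol F}$ is $\bar f_c$ (cf.~\eqref{f_g}); through the cotangent map induced by $f$, this line embeds into $\xic$ and realizes the surjection $\pi_1^{\alg}(\xic)\to G^c$.

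\textbf{Algebraic descent to $\F_p$.} To push the $G^c$-torsor down along $\c$, I would combine the factorization
\begin{equation*}
\bar f_c(T)=\lt(\prod_{\s\in G-G^c}u_\s\rt)\prod_{j=0}^{p-1}\bar f_{c,\c}(T+ju_\t)
\end{equation*}
with the additivity of $\bar f_{c,\c}$ (Lemma \ref{isogeny lemma} applied to the subgroup $\ker\c\sub G^c$) and the elementary identity $\prod_{j\in\F_p}(X+j\a)=X^p-\a^{p-1}X$, to obtain
\begin{equation*}
\bar f_c(T)=\lt(\prod_{\s\in G-G^c}u_\s\rt)\lt(\bar f_{c,\c}(T)^p-\bar f_{c,\c}(u_\t)^{p-1}\bar f_{c,\c}(T)\rt).
\end{equation*}
Setting $Y=\bar f_{c,\c}(T)/\bar f_{c,\c}(u_\t)$ then converts the $\c$-quotient of the $G^c$-torsor into the Artin-Schreier cover
\begin{equation*}
Y^p-Y=\f{\bar f_c(T)}{\lt(\prod_{\s\in G-G^c}u_\s\rt)\bar f_{c,\c}(u_\t)^p}.
\end{equation*}

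\textbf{Linearization and main obstacle.} It remains to read off $\rsw(\c)$ from this Artin-Schreier equation. The factor $\p^{-c}\in\m^{-c}_{\ol K}/\m^{-c+}_{\ol K}$ comes from the level-$c$ dilatation. For the $\O^1_F$-direction, differentiate the relation $f(h)=0$: combined with Lemma \ref{p^2}, the terms $h^i\,\dr a_i$ for $1\le i\le p^n-1$ die because $v(a_i)\ge 2$ forces $\dr a_i\equiv 0$ in $\hat\O^1_{\OK/F_0}\ot_{\OK}F$, while $f'(h)\,\dr h$ lies in $\O^1_{E/F}$ and contributes nothing to $\O^1_F$. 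Consequently only $\dr\bar a_0$ survives in the $\O^1_F$-direction, with sign $-$ coming from $\dr\bar a_0\equiv-\sum_{i\ge 1}h^i\,\dr\bar a_i-f'(h)\,\dr\bar h$. Matching this against the coefficient $\prod_{\s\in G-\{1\}}u_\s$ of $T$ in $\bar f_c(T)$ and dividing by $\prod_{\s\in G-G^c}u_\s\cdot\bar f_{c,\c}(u_\t)^p$ then yields the stated formula. The main obstacle is this matching: pinning down the precise correspondence between the rescaled coordinate $T$ on the tubular neighborhood and a linear functional on $\xic$, which requires carefully comparing the rigid-geometric Abbes-Saito construction with the algebraic cotangent complex of $\OL/\OK$ and keeping track of signs and powers of $\p$ in the normalization.
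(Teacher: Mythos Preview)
Your approach is essentially the same as the paper's: factor $\bar f_c$ through $\bar f_{c,\c}$, then rescale to reach a Lang/Artin--Schreier torsor and read off the linear form. Your ``algebraic descent'' paragraph is exactly the computation the paper encodes in the Cartesian diagram \eqref{serre} with $\l_1,\l_2,\tilde f_{c,\c}$.

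Where you diverge is in your ``Linearization'' step, and this is also where the proposal becomes imprecise. The matching you flag as the \emph{main obstacle} is not an obstacle at all: it is precisely the content of Propositions \ref{thnong isogeny}, \ref{hatO isogeny char 0} and \ref{isogeny xi}, which give directly $\mu'^*(\x)=\dr\bar a_0\ot\p^{-c}$ on $\xic$. The function $\bar f_c(T)$ on the torsor \emph{is} the pullback of the coordinate $\x$ on the target $\A^1_{\ol F}$, hence descends to the linear form $\dr\bar a_0\ot\p^{-c}$ on the base $\xic$; there is no need to differentiate $f(h)=0$ or to single out the linear coefficient $\prod_{\s\in G-\{1\}}u_\s$ of $\bar f_c$. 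Your attempt to produce $\dr\bar a_0$ by differentiating $f(h)=0$ and killing the other terms via Lemma \ref{p^2} is heuristically suggestive but is neither how the paper proceeds nor a clean substitute for the tubular-neighborhood computation (and it silently drops the case $c=2$).

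Your justification for the sign is also off. The minus sign in $\rsw(\c)$ does not come from the relation $\dr\bar a_0\equiv-\sum h^i\dr\bar a_i-f'(h)\dr\bar h$; it comes from matching the $G^c$-action $d_\s^*(\x)=\x-u_\s$ with the standard $\F_p$-translation on the Lang torsor. With your coordinate $Y=\bar f_{c,\c}(T)/\bar f_{c,\c}(u_\t)$, an element $\s$ with $\c(\s)=j$ acts by $Y\mapsto Y-j$, the opposite of the Lang convention $Y\mapsto Y+j$; this is what forces the sign. The paper makes this explicit by taking $\l_2^*(\x)=-\x/\bar f_{c,\c}(u_\t)$ and $\phi(1)=-\bar f_{c,\c}(u_\t)$, and then checking that translation by $\phi(\c(\s))$ agrees with $d_\s$.
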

The proof of this theorem is given in \ref{proof of theorem}.

\begin{corollary}\label{cc in O}
Let $M$ be a finite dimensional $\L$-vector space with a non-trivial linear $G$-action. Then, with the notation of $\ref{l L psi}$, we have \eqref{cc formula}
\begin{equation*}
\cc_{\psi_0}(M)\in (\O^1_F\ot_F \ol F)^{\ot r},
\end{equation*}
where $r=\dim_{\L}M/M^{(0)}$ (\ref{slope decom lemma}).
\end{corollary}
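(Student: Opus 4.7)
The plan is to reduce, one central character at a time, to Proposition~\ref{thlog factor xi}. Applying the slope decomposition $M=\bigoplus_s M^{(s)}$ and, for each $s>0$, the central character decomposition $M^{(s)}=\bigoplus_\chi M^{(s)}_\chi$, the definition \eqref{cc formula} writes $\cc_{\psi_0}(M)$ as a tensor product of elementary factors $(\rsw(\chi)\otimes \pi^s)^{\dim_\L M^{(s)}_\chi}$. Since $\pi^s\in\m^s_{\ol K}/\m^{s+}_{\ol K}$ cancels precisely the twist in the target of $\rsw$, the corollary amounts to showing that $\rsw(\chi)\in\O^1_F\otimes_F\m^{-s}_{\ol K}/\m^{-s+}_{\ol K}$ for every non-trivial central character $\chi$ of positive slope $s$ appearing in $M$.

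Fix such a $\chi$ and put $L_s:=L^{G^{s+}_{\log}}$, where $G^{s+}_{\log}$ denotes the image of $\gkl^{s+}$ in $G$. Because the $G_K$-action on $M$ factors through $G$, the presence of a non-zero slope-$s$ component forces the images of $\gkl^s$ and $\gkl^{s+}$ in $G$ to differ, so $G^{s+}_{\log}\subsetneq G^s_{\log}\subseteq G$. In particular $L_s\ne K$, and by construction $L_s/K$ is Galois with $\gal(L_s/K)=G/G^{s+}_{\log}$ and logarithmic conductor equal to $s$. I would then verify that $L_s/K$ is itself of type (II): its ramification index divides that of $L/K$, hence equals $1$, and its residue field is a proper non-trivial subfield of $E=F(\bar h)$; since $E/F$ is purely inseparable and monogenic with $\bar h^{p^n}\in F$, every intermediate field has the form $F(\bar h^{p^k})$ for some $k<n$, and so remains purely inseparable and monogenic over $F$.

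Applying Proposition~\ref{thlog factor xi} to $L_s/K$, with $s$ playing the role of the conductor $c$, one obtains $\rsw(\chi')\in\O^1_F\otimes_F\m^{-s}_{\ol K}/\m^{-s+}_{\ol K}$ for every non-trivial $\F_p$-valued character $\chi'$ of the subquotient $G^s_{\log}/G^{s+}_{\log}$ of $\gal(L_s/K)$ (the analogue of $G^c$ for $L_s/K$). Our $\chi$ is precisely such a character, and because the refined Swan conductor is an intrinsic invariant of the $G_K$-character --- Theorem~\ref{isogeny} constructs $\rsw$ via $\pi_1^{\alg}(\thlogr)$ without reference to any particular realizing extension --- this gives the needed containment for $\chi$. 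Tensoring over all slopes and all central characters then yields the desired conclusion.

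The main obstacle I anticipate is the middle paragraph: checking that $L_s/K$ is of type (II). The essential input is the classification of intermediate fields of the purely inseparable monogenic extension $E/F$ as $F(\bar h^{p^k})$; once this is in hand, the remaining requirements (ramification index $1$ and non-trivial residue extension) follow immediately from $L_s\subseteq L$ and $L_s\ne K$.
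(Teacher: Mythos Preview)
Your proposal is correct and is essentially the approach the paper has in mind. The paper states Corollary~\ref{cc in O} without proof, treating it as an immediate consequence of Proposition~\ref{thlog factor xi}; the reduction you spell out---replacing $G$ by $G/G^{s+}_{\log}$ so that the slope $s$ becomes the conductor---is exactly the maneuver the paper uses later in the proof of Theorem~\ref{mainkey} (``we may assume further that the unique slope of $M$ is equal to $c$''). Your verification that the intermediate extension $L_s/K$ remains of type~(II), via the classification of intermediate fields of a simple purely inseparable extension as $F(\bar h^{p^k})$, is a point the paper leaves implicit but which is indeed needed; your argument for it is correct. One very minor remark: when $s=c$ you have $L_s=L$, so the residue field is $E$ itself rather than a proper subfield, but in that case Proposition~\ref{thlog factor xi} applies directly and no reduction is required.
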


\section{Tubular neighborhoods and normalized integral models}
\subsection{}
Let $R$ be an $\OK$-algebra. Following (\cite{as ii} 1), we say that $R$ is formally of finite type over $\OK$ if it is semi-local with radical $\m_R$, $\m_R$-adically complete, Noetherian and if the quotient $R/\m_R$ is of finite type over $F$. We say that $R$ is topologically of finite type over $\OK$ if it is $\p$-adically complete, Noetherian and if the quotient $R/\p R$ is of finite type over $F$.

\subsection{}\label{dejong}
We denote by $\afs$ the category of affine Noetherian adic formal schemes $\fX$ over $\spf(\OK)$ such that the closed sub-scheme $\fX_{\red}$ defined by the largest ideal of definition of $\fX$, is a scheme of finite type over $\spec(F)$. Let $A$ be a finite flat algebra over $\OK$, and  $i:\spf(A)\ra\fX$ a closed immersion in $\afs$. For any rational number $r>0$, following (\cite{dejong} 7.1 and \cite{am} 2.1), we associate to $i$ a $K$-affinoid variety $X^r$, called the {\em tubular neighborhood of $i$ of thickening $r$}, as follows. Let $\fX=\spf(\cA)$, $I$ be the ideal of $\cA$ which defines the immersion $i$ and $t,s>0$ be two integer such that $r=t/s$. Let $\cA\<I^s/\p^t\>$ be the $\p$-adic completion of the subalgebra of $\cA\ot_{\OK}K$ generated by $\cA$ and $f/\p^t$ for $f\in I^s$. Then $\cA\<I^s/\p^t\>\ot_{\OK} K$ is a $K$-affinoid algebra which depends only on $r$. We denote by $X^r$ the $K$-affinoid variety $\Sp(\cA\<I^s/\p^t\>\ot_{\OK} K)$.
For rational numbers $r'>r>0$, there exists a canonical morphism $X^{r'}\ra X^r$ which makes $X^{r'}$ a rational sub-domain of $X^r$. The admissible union of the affinoid spaces $X^{r}$ for $r\in\Q_{\geqslant 0}$ is a quasi-separated rigid variety over $K$.

\begin{proposition}[Finiteness theorem of Grauert-Remmert, \cite{bgr} 6.4.1/3, \cite{as i} 4.2]\label{fini theo}
Let $\cR$ be a geometrically reduced $K$-affinoid algebra. Then, there exists a finite separable extension $K'$ of $K$ such that the supremum norm unit ball (\cite{bgr} 3.8.1)
\begin{equation}\label{model over K'}
\cR_{\cO_{K'}}=\{f\in \cR\ot_K K';|f|_{\sup}\leqslant 1\}\sube\cR\ot_K K'
\end{equation}
has a reduced geometric closed fiber $\cR_{\cO_{K'}}\ot_{\cO_{K'}}\ol F$. Moreover, the formation of $\cR_{\cO_{K'}}$ commutes with any finite extension of $K'$.
\end{proposition}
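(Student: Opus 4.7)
The plan is to invoke the classical finiteness theorem of Grauert--Remmert for reduced affinoid algebras, reformulating the proof so that the stability under further finite extensions stated in the second clause becomes transparent. First I would reduce to the case where $\cR$ is an integral domain: since $\cR$ is reduced and Noetherian, it splits as a finite product $\prod_i \cR_i$ of its minimal prime quotients, each still geometrically reduced, and the supremum norm decomposes along the factors, so any finite extension $K'/K$ that works for every $\cR_i$ works for $\cR$.

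Once $\cR$ is a domain, apply Noether normalization for affinoid algebras to obtain a finite injection $T_d = K\<X_1,\ldots,X_d\> \ira \cR$. Geometric reducedness forces the induced extension of fraction fields $\mathrm{Frac}(\cR)/\mathrm{Frac}(T_d)$ to be finite and separable. Combining the maximum modulus principle with this Noether normalization, the supremum norm unit ball $\cR_{\cO_K}$ coincides with the integral closure of $\cO_K\<X_1,\ldots,X_d\>$ in $\cR$; its special fiber is then the integral closure of $F\<X_1,\ldots,X_d\>$ inside the corresponding residue ring. The obstruction to reducedness of this special fiber is therefore concentrated at the finitely many height-one primes of $\cR_{\cO_K}$ above the uniformizer $\p$, and at each such prime it is measured by the defect (inseparability of the residue extension, or wild ramification) of $\cR_{\cO_K}$ over $\cO_K\<X_1,\ldots,X_d\>$.

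The main step, and the true hard part, is to produce a single finite separable extension $K'/K$ that simultaneously kills all of these defects: that is, that makes $\cR \otimes_K K'$ generically \'etale over $T_d \otimes_K K'$ at every height-one prime of the integral closure lying above $\p$. I would do this by a Krasner-type argument, using that the ramification data at these finitely many primes are algebraic over $K$ and that $T_d$ is excellent, so that a sufficiently large finite extension realizes all the needed splittings at once. After such a $K'$ is fixed, the special fiber of $\cR_{\cO_{K'}}$ is normal in codimension one and satisfies $S_1$ by flatness over $\cO_{K'}$, hence is reduced. Stability under a further finite extension $K''/K'$ is then formal: the integral closure commutes with the flat base change $\cO_{K'} \to \cO_{K''}$, and geometric reducedness of the special fiber over $K'$ persists after $\otimes_{\cO_{K'}} \cO_{K''}$.
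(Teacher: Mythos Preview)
The paper does not give a proof of this proposition; it is quoted as a known result with references to \cite{bgr} 6.4.1/3 and \cite{as i} 4.2, so there is no in-paper argument to compare your proposal against.

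That said, your outline follows the classical route (Noether normalization plus Grauert--Remmert finiteness of integral closure, then Serre's criterion $R_0+S_1$), which is indeed how the cited references proceed. A few points deserve tightening. First, a reduced Noetherian ring need not split as a product of its minimal-prime quotients (think of $k[x,y]/(xy)$); what is true is that the normalization splits, or that one may argue componentwise on $\Sp(\cR)$ since the sup norm is computed componentwise. Second, the sentence ``its special fiber is then the integral closure of $F\langle X_1,\ldots,X_d\rangle$ inside the corresponding residue ring'' presupposes exactly what must be proved: compatibility of integral closure with reduction mod $\p$ is the content of the theorem, not an input. Third, the ``Krasner-type argument'' is the genuine work here and your sketch does not yet isolate the mechanism; in \cite{bgr} the key step is the finiteness theorem 6.4.1/3 (integral closure of a Tate algebra in a finite reduced extension is a finite module), after which one uses that the special fiber becomes reduced over a large enough residue extension and lifts this to a finite separable $K'/K$. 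Finally, ``integral closure commutes with flat base change $\cO_{K'}\to\cO_{K''}$'' is not automatic in general; what makes it hold here is precisely that once $\cR_{\cO_{K'}}\ot_{\cO_{K'}}\ol F$ is reduced, $\cR_{\cO_{K'}}\ot_{\cO_{K'}}\cO_{K''}$ is already normal, hence equal to its integral closure.
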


\subsection{}\label{def nim}
Let $\cR$ be a geometrically reduced $K$-affinoid algebra. We consider the collection of $\cO_{K'}$-formal scheme $\spf(\cR_{\cO_{K'}})$, where $K'$ and $\cR_{\cO_{K'}}$ are as in \ref{fini theo}, as a unique model of $\Sp(\cR)$ over $\cO_{\ol K}$. We call it the {\it normalized integral model} over $\cO_{\ol K}$. We say that the the normalized integral model of $\Sp(\cR)$ is defined over $K'$ if the supremum norm unit ball $\cR_{\cO_{K'}}$ has a reduced geometric special fiber. We call this reduced geometric special fiber over $\ol F$ the special fiber of the normalized integral model of $\Sp(\cR)$ over $\cO_{\ol K}$.

\begin{proposition}[\cite{as i} 4.4]\label{iso sp gen}
Let $X$ be a geometrically reduced affinoid variety over $K$, $\fX$ its normalized integral model over $\cO_{\ol K}$ and $\ol\fX$ the special fiber of $\fX$. Then the set of geometric connected components of $X$ and $\ol\fX$ are isomorphic.
\end{proposition}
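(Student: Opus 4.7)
My plan is to identify both $\pi_0(X \ot_K \ol K)$ (the geometric connected components of $X$) and $\pi_0(\ol\fX)$ with the same set of idempotents, making the bijection natural.

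First, I fix a finite separable extension $K'/K$ over which the normalized integral model is defined (Proposition \ref{fini theo}), write $\cR' = \cR_{\cO_{K'}}$ for the unit ball, and note that $\ol\fX = \spec(\cR' \ot_{\cO_{K'}} \ol F)$. For any further finite extension $K''/K'$ inside $\ol K$, with integer ring $\cO_{K''}$ and residue field $F''$, we have $\cR_{\cO_{K''}} = \cR' \ot_{\cO_{K'}} \cO_{K''}$ by the compatibility clause of Proposition \ref{fini theo}; in particular $\cR_{\cO_{K''}} \ot_{\cO_{K''}} F'' = \cR' \ot_{\cO_{K'}} F''$ and $\cR_{\cO_{K''}} \ot_{\cO_{K''}} \ol F = \cR' \ot_{\cO_{K'}} \ol F$.

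I then establish three identifications at the level of idempotents. (a) The idempotents of $\cR \ot_K K''$ coincide with those of $\cR_{\cO_{K''}}$: for any idempotent $e$ one has $|e|_{\sup}^2 = |e^2|_{\sup} = |e|_{\sup}$, so $|e|_{\sup} \in \{0,1\}$, forcing $e$ into the unit ball \eqref{model over K'}. (b) The Noetherian, $\p$-adically complete ring $\cR_{\cO_{K''}}$ is $\p$-Henselian, so idempotents lift uniquely along the reduction $\cR_{\cO_{K''}} \sra \cR_{\cO_{K''}} \ot_{\cO_{K''}} F''$. (c) For $K''$ sufficiently large, every primitive idempotent of $\cR \ot_K \ol K$ already appears in $\cR \ot_K K''$, and every primitive idempotent of $\cR' \ot_{\cO_{K'}} \ol F$ already appears in $\cR' \ot_{\cO_{K'}} F''$, since each scheme has only finitely many geometric components and each is defined over a finite extension.

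Chaining the three bijections gives
\begin{equation*}
\pi_0(X \ot_K \ol K) \longleftrightarrow \mathrm{Idemp}(\cR \ot_K K'') \longleftrightarrow \mathrm{Idemp}(\cR_{\cO_{K''}}) \longleftrightarrow \mathrm{Idemp}(\cR_{\cO_{K''}} \ot F'') \longleftrightarrow \pi_0(\ol\fX),
\end{equation*}
which is the desired bijection. The main subtlety is arranging that one common extension $K''/K'$ is large enough to witness all geometric components on the generic and special sides at once; this is handled by forming the compositum of the finitely many finite extensions involved in (c). A minor technical input, used in step (b), is the standard fact that a Noetherian $I$-adically complete ring is $I$-Henselian, so that idempotents lift uniquely from its reduction modulo $I$.
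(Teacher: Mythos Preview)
The paper does not give its own proof of this proposition: it is simply quoted from \cite{as i}~4.4, so there is no argument in the paper to compare against. Your proof is the standard one and is correct. The three ingredients you isolate---(a) idempotents of an affinoid algebra lie in the sup-norm unit ball because $|e|_{\sup}\in\{0,1\}$, (b) unique lifting of idempotents along $\cR_{\cO_{K''}}\to\cR_{\cO_{K''}}/\p\cR_{\cO_{K''}}$ by Henselianity of a $\p$-adically complete Noetherian ring, and (c) a finiteness argument to find a single $K''$ capturing all geometric components on both sides---are exactly what is needed, and the compatibility $\cR_{\cO_{K''}}=\cR_{\cO_{K'}}\ot_{\cO_{K'}}\cO_{K''}$ from Proposition~\ref{fini theo} is what makes the chain of bijections independent of the choice of $K''$. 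The only point worth making explicit is that $\cR_{\cO_{K''}}$ is indeed Noetherian and $\p$-adically complete: this follows from the Grauert--Remmert finiteness theorem (\cite{bgr}~6.4.1/3), which guarantees that the sup-norm unit ball of a reduced affinoid algebra is topologically of finite type over $\cO_{K''}$.
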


\subsection{}\label{geo monodromy}
Let $X$ be a geometrically reduced affinoid variety over $K$, $\fX$ its normalized integral model over $\cO_{\ol K}$ and $\ol\fX$ the special fiber of $\fX$. If $\fX$ is defined over a finite Galois extension $K'$ of $K$, we denote by $\fX_{\cO_{K'}}$ the normalized integral model of $X$ over $\cO_{K'}$. The natural $K'$-semi-linear action of $G_K$ on $X\ot_K K'$ extends to an $\cO_{K'}$-semi-linear action of $G_K$ on $\fX_{\cO_{K'}}$. If $K''$ is another finite Galois extension of $K$ containing $K'$, then $\fX'_{\cO_{K''}}=\fX_{\cO_{K'}}\ot_{\cO_{K'}}\cO_{K''}$ and the semi-linear action of $G_K$ on both sides are compatible. Hence, it induces an $\ol F$-semi-linear action of $G_K$ on the special fiber $\ol\fX$, called the {\it geometric monodromy} (\cite{as i} 4.5).

\section{Isogenies associated to extensions of type (II): the equal characteristic case}
\subsection{}\label{first pr char p}
In this section, we assume that $K$ has characteristic $p$ and that the residue field $F$ of $\OK$ is of finite type over a perfect field $F_0$. For an object $L$ of $\fek$ and an integer $r\geqslant 1$, we denote by $(\cO_{L}/\m^r_{L})\what\ot_{F_0} \OK$ the completion of $(\cO_{L}/\m^r_{L})\ot_{F_0} \OK$ relatively to the kernel of the homomorphism
\begin{equation}\label{surj first}
(\cO_{L}/\m^r_{L})\ot_{F_0} \OK \ra \cO_{L}/\m^r_{L},\ \ \ a\ot b\ra ab,
\end{equation}
and by $\cO_{L}\wdhat{\ot}_{F_0}\OK$ the projective limit
\begin{equation*}
\plim_r (\cO_{L}/\m^r_{L})\what\ot_{F_0} \OK.
\end{equation*}
We will always consider $\cO_{L}\wdhat{\ot}_{F_0}\OK$ as an $\OK$-algebra by the homomorphism
\begin{equation}\label{OK alg}
\OK\ra\cO_{L}\wdhat{\ot}_{F_0}\OK,\ \ \ u\ma 1\ot u,
\end{equation}
(in the following, we always abbreviate $1\ot u$ by $u$) and we will consider it as an $\cO_{L}$-algebra by
\begin{equation*}
\cO_{L}\ra \cO_{L}\wdhat{\ot}_{F_0}\OK,\ \ \ v\ma v\ot 1.
\end{equation*}
There is a canonical surjective homomorphism
\begin{equation}\label{surj third}
\cO_{L}\wdhat{\ot}_{F_0}\OK\ra\cO_{L}
\end{equation}
induced by the surjections \eqref{surj first}. We denote by $I_{L}$ its kernel.

\begin{proposition}[\cite{as ii} 2.3]\label{A ot O char p}
Let $L$ be an object of $\fek$.
\begin{itemize}
\item[(i)]
The $\OK$-algebra $\cO_{L}\wdhat{\ot}_{F_0}\OK$ is formally of finite type and formally smooth over $\OK$ and the morphism $(\cO_{L}\wdhat{\ot}_{F_0}\OK)/\m_{\cO_{L}\swdhat{\ot}_{F_0}\OK}\ra \cO_{L}/\m_{\cO_{L}}$ \eqref{surj third} is an isomorphism.
\item[(ii)]
Any morphism $L\ra L'$ of $\fek$ induces an isomorphism
\begin{equation}\label{origin reps}
\cO_{L'}\ot_{\cO_{L}}(\cO_{L}\wdhat{\ot}_{F_0}\OK)\isora\cO_{L'}\wdhat{\ot}_{F_0}\OK.
\end{equation}

\end{itemize}
\end{proposition}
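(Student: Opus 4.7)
The plan is to present $\cO_L\wdhat{\ot}_{F_0}\OK$ explicitly as a formal power series ring over $\OK$ modulo a single separable equation, from which formal finite type, formal smoothness, and the residue identification of (i) are all immediate, and then to verify (ii) by comparing presentations for $L$ and $L'$.

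Since the construction commutes with finite products in $L$, I would first reduce to the case where $L/K$ is a finite separable field extension with residue field $E/F$. The field $E$ is finitely generated over the perfect field $F_0$, so I choose a finite separating transcendence basis $\bar t_1,\dots,\bar t_d$ of $E/F_0$ and a primitive element $\bar u$ of the resulting finite separable extension, with minimal polynomial $g(T_1,\dots,T_d,U)\in F_0[T_1,\dots,T_d][U]$ separable in $U$. Lifting $\bar t_i,\bar u$ to $t_i,u\in\cO_L$ and choosing a uniformizer $\pi_L$, Cohen's structure theorem in equal characteristic yields $\cO_L\cong E\llbracket\pi_L\rrbracket$, which combined with the presentation of $E/F_0$ just described exhibits $\cO_L$ as a completion of a smooth $F_0$-algebra modulo $G(t,u)=0$, where $G$ is a lift of $g$.

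Applying $\ot_{F_0}\OK$ to this presentation and completing at the kernel $I_L$ of the multiplication map $\cO_L\ot_{F_0}\OK\to\cO_L$, I would set $X_i = t_i\ot 1 - 1\ot\tilde t_i$ for chosen lifts $\tilde t_i\in\OK$ of the residues of $\bar t_i$ in $F$, set $Y = u\ot 1 - 1\ot \tilde u$ analogously, and set $Z = \pi_L\ot 1$. A direct check then identifies
\[
\cO_L\wdhat{\ot}_{F_0}\OK \;\isora\; \OK\llbracket X_1,\dots,X_d, Y, Z\rrbracket\big/\bigl(\tilde G\bigr),
\]
where $\tilde G$ is a suitable lift of $G$, separable in $Y$. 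The right-hand side is manifestly formally of finite type and formally smooth over $\OK$, and quotienting by its maximal ideal (i.e.\ setting $X_i = Y = Z = 0$ and reducing mod $\m_K$) recovers $E = \cO_L/\m_L$; this gives (i).

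For part (ii), the same recipe applied to $L'$ produces a presentation compatible with the $\cO_L$-algebra structure on $\cO_{L'}$. The base change isomorphism then reduces to checking that both the naive tensor product $\ot_{F_0}\OK$ and the $I_L$-adic completion commute with the finite base change $\cO_L\to\cO_{L'}$, which follows from finiteness and flatness of $\cO_{L'}$ over $\cO_L$. The main obstacle is the explicit identification in Step~2: one must verify that the change of variables $T_i\ot 1 \leftrightarrow X_i + \tilde t_i$ (and analogously for $U$ and $\pi_L$) genuinely produces a free power-series ring modulo the single relation $\tilde G = 0$. This rests on the fact that $\bar t_1,\dots,\bar t_d$ is a separating transcendence basis for $E$ over $F_0$ and hence gives, together with $\bar u$, a system of formally independent parameters in the appropriate topology, so that $\what\O^1_{\cO_L/F_0}$ has exactly the expected rank.
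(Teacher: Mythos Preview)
The paper does not prove this proposition; it is quoted from \cite{as ii} 2.3 without argument. So there is no proof in the paper to compare against, and I will simply assess your sketch on its own terms.

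Your argument for (ii) --- that both $\ot_{F_0}\OK$ and the completion commute with the finite flat base change $\cO_L\to\cO_{L'}$ --- is correct and is essentially the same computation the paper carries out in the characteristic~$0$ analogue (Proposition 8.4).

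Your argument for (i), however, has a genuine error. You claim an isomorphism
\[
\cO_L\wdhat{\ot}_{F_0}\OK \;\isora\; \OK\llbracket X_1,\dots,X_d,Y,Z\rrbracket/(\tilde G),
\]
but any quotient of a formal power series ring over $\OK$ is either zero or has residue field $F$. Since part (i) asserts that the residue field is $\cO_L/\m_L=E$, your presentation cannot hold whenever $E\neq F$. The mistake enters at the choice of the coordinate $Y=u\ot 1-1\ot\tilde u$ ``analogously'': there is no $\tilde u\in\OK$ whose residue is $\bar u$ unless $\bar u\in F$, which is exactly the case $E=F$. The element $u\ot1$ is \emph{not} in the maximal ideal of $\cO_L\wdhat{\ot}_{F_0}\OK$, so it cannot serve as a power-series variable over $\OK$.

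The correct picture is that $\cO_L\wdhat{\ot}_{F_0}\OK$ is a power series ring over $\cO_L$, not over $\OK$: by (ii) applied to $K\to L$ one gets $\cO_L\ot_{\OK}(\cO_K\wdhat{\ot}_{F_0}\OK)\cong\cO_L[[T_0,\dots,T_m]]$, and your coordinate argument does go through for $L=K$ (where $E=F$). So the cleanest route is to reverse the order: establish (ii) first by your base-change argument, then prove (i) only for $L=K$ via your Cohen-type presentation, and deduce (i) for general $L$ from the isomorphism $\cO_L\wdhat{\ot}_{F_0}\OK\cong\cO_L[[T_0,\dots,T_m]]$. Alternatively, observe directly that $\cO_L\cong E[[\pi_L]]$ is formally smooth over the perfect field $F_0$ (since $E/F_0$ is separable), whence formal smoothness over $\OK$ follows by base change.
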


\subsection{}\label{nim char p}
Let $L$ be an object of $\fek$. By \ref{A ot O char p}, $\spf(\cO_{L}\wdhat{\ot}_{F_0}\OK)$ is an object of $\afs$ (\ref{dejong}). For any rational number $r>0$ and integer numbers $s,t>0$ such that $r=t/s$, we denote by $\cR^r_{L}$ the $K$-affinoid algebra
\begin{equation}
\cR^r_{L}=(\cO_{L}\wdhat{\ot}_{F_0}\OK)\<I_{L}^s/\p^t\>\ot_{\OK}K,
\end{equation}
by $X^r_{L}=\Sp(\cR^r_{L})$ the tubular neighborhood of thickening $r$ of the closed immersion $\spf(\cO_{L})\ra \spf(\cO_{L}\wdhat{\ot}_{F_0}\OK)$ \eqref{surj third}, (\ref{dejong}), which is smooth over $K$ (\cite{as ii} 1.7). By \ref{fini theo}, there exists a finite separable extension $K'$ of $K$ such that the normalized integral model of $X^r_L$ over $\cO_{\ol K}$ is defined over $K'$ (\ref{def nim}). We denote by $\cR^r_{L,\cO_{K'}}$ the supremum norm unit ball of $\cR^r_{L}\ot_K K'$ (\ref{model over K'}), by $\fX^r_L$ the normalized integral model of $X^r_L$ over $\cO_{\ol K}$ and by $\ol \fX^r_L$ the special fiber of $\fX^r_L$ (\ref{def nim}).

\subsection{}\label{X int r}
Let $m$ be the dimension of the $F$-vector space $\O^1_F$, which is finite. By (\cite{as ii} 1.14.3), there is an isomorphism of $\OK$-algebras
\begin{equation}\label{iso o ot o}
\OK[[T_0,...,T_m]]\isora \cO_{K}\wdhat{\ot}_{F_0}\OK,
\end{equation}
such that the composition of it and \eqref{surj third} $\OK[[T_0,...,T_m]]\isora \cO_{K}\wdhat{\ot}_{F_0}\OK\ra \OK$ maps $T_i$ to $0$. Here the $\OK$-algebra structure of $\cO_{K}\wdhat{\ot}_{F_0}\OK$ is as in \eqref{OK alg}. If $r$ is an integer $\geqslant 1$, we have an isomorphism of $K$-affinoid algebras
\begin{equation}\label{iso affinoid K}
K\<T_0/\p^r,...,T_m/\p^r\>\isora \cR^r_K.
\end{equation}
The normalized integral model $\fX^r_K$ is defined over $\OK$, and we have an isomorphism
\begin{equation}\label{iso nim K}
\OK\<T_0/\p^r,...,T_m/\p^r\>\isora (\cO_{K}\wdhat{\ot}_{F_0}\OK)\<I_K/\p^r\>=\cR^r_{K,\OK}.
\end{equation}
Hence the closed fiber $\ol\fX^r_K$ is isomorphic to the affine scheme
\begin{equation*}
\spec \ol F[T_0/\p^r,...,T_m/\p^r].
\end{equation*}
In general, for any rational number $r>0$, the $K$-affinoid variety $X^r_K$ is isomorphic to $D^{m+1,(r)}$ and the rigid space $X_K=\cup_{r>0}X^r_K$ is isomorphic to $\mathring D^{m+1}$ (\ref{ram type ii notation2}).

By (\cite{as ii} 1.13, 2.4), for any rational number $r>0$, there exists a canonical isomorphism $\ol\fX^r_K\isora\thnonr$ (\ref{thnon}) which is compatible with the geometric monodromy on $\ol\fX^r_K$ and the natural $G_K$-action on $\thnonr$ (via its action on $\m^r_{\ol K}/\m^{r+}_{\ol K}$). If $r$ is an integer, it is constructed as follows. Firstly, we have a natural ring isomorphism
\begin{equation}\label{alg C to X}
\bp_{i=0}^{\infty}I_K^i/I_K^{i+1}\ot_{\OK}\m^{-ir}_{K}/\m^{-ir+1}_{K}\ra \cR^r_{K,\OK}/\m_K\cR^r_{K,\OK},\ \ \ \ol b\ot \ol c\ma \ol{bc},
\end{equation}
by \eqref{iso o ot o} and \eqref{iso nim K}. Extending scalars, we have
\begin{equation}\label{X to C}
\ol\fX^r_K\isora \spec\lt(\bp_{i=0}^{\infty}I_K^i/I_K^{i+1}\ot_{\OK}\m^{-ir}_{\ol K}/\m^{-ir+}_{\ol K}\rt).
\end{equation}
Then, from (\cite{as ii} 1.14.3, 2.4), we have an isomorphism of free $\OK$-modules
\begin{equation}\label{Omega iso I}
\hO_{\OK/F_0}\ra I_K/I_K^2 ,\ \ \ \dr t\ma\ol{1\ot t-t\ot 1},
\end{equation}
which induces the isomorphism $\ol\fX^r_K\ra\thnonr$.

\subsection{}\label{p to G^r}
Let $L$ be a finite Galois extension of $K$ of group $G$ and conductor $r>1$. By (\cite{as i} 7.2), the natural action of $G$ on $\cO_{L}\wdhat\ot_{F_0}\OK$ induces an $\cO_{\ol K}$-linear action of $G$ on $\fX^r_L$ making it an \'etale $G$-tosor over $\fX^r_K$. In particular, $X^r_{L}$ and $\ol\fX^r_{L}$ are \'etale $G$-tosors of $ X^r_K$ and $\ol\fX^r_K$, respectively. The geometric monodromy action of $G_K$ on $\ol\fX^r_{L}$ (\ref{geo monodromy}) commutes with the action of $G$. Let $\ol\fX^r_{L,0}$ be a connected component of $\ol\fX^r_{L}$. The stabilizers of $\ol\fX^r_{L,0}$ via these two actions are $G^r$ and $G^r_K$, respectively. Then, we get an isomorphism $G^r\isora \aut(\ol\fX^r_{L,0}/\ol\fX^r_K)$ and a surjection $G^r_K\ra \aut(\ol\fX^r_{L,0}/\ol\fX^r_K)$ which implies that $G^r$ is commutative (cf. \cite{as ii} 2.15.1). Composing with $\ol\fX^r_K\isora\thnonr$, the \'etale covering $\ol\fX^r_{L,0}\ra\thnonr$ induces a surjective homomorphism (\cite{as ii} 2.15.1)
\begin{equation*}
\p^{\ab}_1(\thnonr)\ra \gr^rG_K\ra G^r.
\end{equation*}

\subsection{}
In the rest of this section, let $L/K$ be a finite Galois extension of type (II) and we take again the notation and assumptions of \ref{ram type ii notation} and \ref{ram type ii notation2}.
By \eqref{origin reps} and the proof of (\cite{as ii} 1.6), for any rational number $r>0$, we have an isomorphism
\begin{equation}\label{cart1}
\cR_K^r\ot_{\OK\swdhat{\ot}_{F_0}\OK}(\OL\wdhat{\ot}_{F_0}\OK) \isora \cR_L^r.
\end{equation}
It induces, for any rational numbers $r>r'>0$, an isomorphism
\begin{equation*}
\cR^{r}_K\ot_{\cR^{r'}_K}\cR_{L}^{r'}\isora \cR^{r}_L,
\end{equation*}
which gives a Cartesian diagram of rigid spaces
\begin{equation}\label{tub morphism}
\xymatrix{\relax
X^{r}_L\ar[d]\ar[r]&X_L\ar[d]\\
X^{r}_K\ar[r]&X_K}
\end{equation}
where $X_K=\bigcup_{r>0}X^r_K$ and $X_L=\bigcup_{r>0}X^r_L$.

We put
\begin{equation*}
\bmf(T)=T^{p^n}+(a_{p^n-1}\ot 1)T^{p^n-1}+\cdots+(a_0\ot 1)\in (\OK\wdhat{\ot}_{F_0}\OK)[T].
\end{equation*}
From \eqref{origin reps} and \eqref{cart1}, we have a surjection
\begin{equation*}
\t_L:\cR^r_K\<T\>\ra \cR^r_L,\ \ \ T\ma h\ot 1,
\end{equation*}
which induces an isomorphism that we denote abusively also by
\begin{equation}\label{key representation}
\t_L:\cR^r_K\<T\>/\bmf(T)\isora \cR^r_L.
\end{equation}
In other terms, we have a co-Cartesian diagram of homomorphisms of $\cR^r_K$-algebras
\begin{equation}\label{cocart}
\xymatrix{\relax
\cR^r_L&\cR^r_K\<T\>\ar[l]_{\t_L}\\
\cR^r_K\ar[u]&\cR^r_K\<T\>\ar[u]_{\phi}\ar[l]_{\t_K}
}
\end{equation}
where $\phi(T)=\bmf(T)$ and $\t_K(T)=0$. Hence, taking the union of the $K$-affinoid varieties associated to each of the $K$-affinoid algebras in \eqref{cocart} for $r\in\Q_{> 0}$, we have a Cartesian diagram
\begin{equation}\label{tub explicit}
\xymatrix{\relax
X_L\ar[d]\ar[r]^-(0.5){i_L}&X_K\ti D^1\ar[d]^{\bff}\\
X_K\ar[r]^-(0.5){i_K}&X_K\ti D^1}
\end{equation}
where $i_L$, $\bff$ and $i_K$ are the morphisms induced by $\t_L$, $\phi$ and $\t_K$.

\subsection{}\label{ab}
In the following, for any $0\leqslant i\leqslant p^n-1$, we denote by $\a_i$ the element $ a_i-a_i\ot 1\in I_K$ (\ref{first pr char p}). When the conductor $c>2$, for each $1\leqslant i\leqslant p^n-1$, $v(a_i)\geqslant 2$ (\ref{p^2}). Let $a'_i=\p^{-2}a_i\in\OK$. We denote by $\a'_i$ the element $a'_i-a'_i\ot 1\in I_K$ and by $\b$ the element $\p-\p\ot1\in I_K$. Then, we have
\begin{equation*}
\a_i=(a'_i-\a'_i)(2\p\b-\b^2)+\p^2\a'_i.
\end{equation*}
Since $\a'_i$, $\b\in I_K \sub\p^c\cR^c_{K,\OK}$, we have $\a_i\in \p^{c+1}\cR^{c}_{K,\OK}$.

When $c=2$, we have $p=2$, $\sharp G=2$, $\r(c)=1$ and $a''_1=\p^{-1} a_1\in \OK$. We denote by $\a''_1$ the element $ a''_1-a''_1\ot 1\in I_K$. Then we have
\begin{equation*}
\a_1=(a''_1-\a''_1)\b+\p \a''_1.
\end{equation*}
Since $\a''_1,\b\in\p^c\cR^c_{K,\OK}$, we have $\a_1\in \p^c\cR^{c}_{K,\OK}$, and $\ol{\a_1/\p^c}=\ol{a''_1\b/\p^c}\in \cR^{c}_{K,\OK}\big/\p\cR^{c}_{K,\OK}$.

We put
\begin{equation*}
\bmf_0(T)=\sum_{0\leqslant i\leqslant p^n-1}(\a_i/\p^c)\cdot T^i\in \cR^c_{K,\OK}[T].
\end{equation*}
We have
\begin{equation*}
\bmf(T)=f(T)-\sum_{0\leqslant i\leqslant p^n-1}\a_iT^i=f(T)-\p^{c} \bmf_0(T).
\end{equation*}

In the rest of this section, we fix an embedding $L\ra\ol K$.

\begin{proposition}\label{component}
The $K$-affinoid $X^{c}_L$ has $\sharp(G/G^{c})=p^{n-s}$ geometric connected components. Let $\s_1,...,\s_{p^{n-s}}$ be liftings of all the elements of $G/G^{c}$ in $G$. We have
\begin{equation}\label{disjoint union}
i_L(X^{c}_L)\sube\coprod_{1\leqslant j\leqslant p^{n-s}} X^{c}_K\ti (\s_j(h)+D^{1,(\r(c))})\sube X_K\ti D^1,
\end{equation}
and each disc of the disjoint union contains exact one geometric connected component of $X^{c}_L$.
\end{proposition}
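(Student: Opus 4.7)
The plan is to combine three ingredients: the $G$-torsor structure of \ref{p to G^r} to count geometric connected components of $X^c_L$, the Cartesian presentation \eqref{tub explicit} together with the expansion $\bmf(T)=f(T)-\p^c\bmf_0(T)$ of \ref{ab} to locate the image $i_L(X^c_L)$, and the Herbrand function analysis of \ref{ram type ii notation2} together with \eqref{property Gc} to organize the roots of $f$ into $p^{n-s}$ clusters.

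For the count of components, I would apply \ref{iso sp gen}: the geometric connected components of $X^c_L$ correspond bijectively to the connected components of its special fiber $\ol\fX^c_L$. By \ref{p to G^r}, $\ol\fX^c_L\ra\ol\fX^c_K$ is an \'etale $G$-torsor and $G$ permutes its connected components transitively with stabilizer $G^c$. Since $\ol\fX^c_K$ is geometrically connected (being an affine space over $\ol F$ by \ref{X int r}), there are $\sharp(G/G^c)=p^{n-s}$ geometric connected components. To establish the inclusion \eqref{disjoint union}, observe via \eqref{tub explicit} that $X^c_L$ is the zero locus of $\bmf(T)\in\cR^c_K[T]$ in $X^c_K\ti D^1$. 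For any $\ol K$-point $(x,t)$ of $X^c_L$ with $t\in\cO_{\ol K}\cap D^1$, the key content of \ref{ab} is that $\bmf_0(T)\in\cR^c_{K,\OK}[T]$, so $\p^c\bmf_0(x,t)\in\p^c\cO_{\ol K}$. The relation $\bmf(t)=0$ then gives $f(t)\in\p^c\cO_{\ol K}$, i.e. $\tilde f(t)\in D^{1,(c)}$; by \ref{ram type ii notation2} this forces $t\in\coprod_j(\s_j(h)+D^{1,(\r(c))})$, where the grouping of the roots $\s(h)$ of $f$ into exactly $p^{n-s}$ discs of radius $\r(c)$ follows from \eqref{property Gc}.

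Finally, to see that each disc contains exactly one component, I would exploit $G$-equivariance. The morphism $i_L$ intertwines the natural $G$-action on $X^c_L$ with the $G$-action on $X_K\ti D^1$ that is trivial on the first factor and permutes the roots $\s_j(h)$ naturally on the second. Since $G$ acts transitively on the $p^{n-s}$ geometric components of $X^c_L$ (with stabilizer $G^c$) and transitively on the set of $p^{n-s}$ discs (through $G/G^c$), the equal counts force a $G$-equivariant bijection between components and discs. The main technical input is the assertion that $\bmf_0(T)$ has integral coefficients, which is precisely \ref{ab} and which has to treat the generic case $c>2$ (via \ref{p^2} giving $v(a_i)\ge 2$) separately from the special case $c=2$ (where necessarily $p=2$ and $\sharp G=2$); with this integrality in hand, the Herbrand analysis of $\tilde f$ can be transported uniformly across the affinoid $X^c_K$.
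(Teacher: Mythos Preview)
Your argument for the inclusion \eqref{disjoint union} and for the count of geometric connected components is correct and matches the paper's approach (the paper cites \cite{as ii} 2.15 for the count, which is essentially the content of \ref{p to G^r} combined with \ref{iso sp gen} as you use it).

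For the final assertion that each disc contains exactly one component, your $G$-equivariance argument differs from the paper's, and your description of the $G$-action is not quite right. You claim that $i_L$ intertwines the $G$-action on $X^c_L$ with the action on $X_K\ti D^1$ that is trivial on the first factor and permutes the roots $\s_j(h)$ on the second. The first part is correct ($G$ fixes $\cR^c_K$), but on the coordinate $T$ an element $\s$ acts by $T\mapsto \mathbbm P_{\s}(T)$ where $\mathbbm P_{\s}$ is a polynomial with coefficients $b_i\ot 1\in\cR^c_K$, not the scalars $b_i\in\OK$ (cf.\ the computation in the proof of \ref{thnong isogeny}); hence the action on the $D^1$-coordinate genuinely depends on the base point in $X^c_K$ and is not a global permutation of roots. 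Your argument can be repaired by checking the action at the origin $x=0$ of $X^c_K$, where $b_i\ot 1$ and $b_i$ agree and the action does reduce to permuting the roots; this shows $G$ permutes the discs transitively with stabilizer $G^c$, and then equal counts give the bijection. The paper avoids this subtlety entirely: it simply notes that for each $j$ the point $(0,\dots,0,\s_j(h))$ satisfies $\bmf(0,\dots,0,\s_j(h))=f(\s_j(h))=0$, so each disc already contains a point of $X^c_L$, and pigeonhole finishes.
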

\begin{proof}
By the Cartesian diagrams \eqref{tub morphism} and \eqref{tub explicit}, we have
\begin{equation*}
i_L(X^{c}_L)=\bff^{-1}(i_K(X^{c}))\sube X^{c}_K\ti D^1\sube X_K\ti D^1.
\end{equation*}
Taking in account the isomorphisms \eqref{iso affinoid K} and \eqref{iso nim K}, for any point
\begin{equation*}
(t_0,...,t_m, t)\in X^{c}_K\ti D^1-\coprod_{1\leqslant k\leqslant p^{n-s}} X^{c}_K\ti (\s_k(h)+D^{1,(\r(c))}),
\end{equation*}
we have $v(f(t))<c$ and $v((\a_i/\p^{c})(t_0,...t_m)t^i)\geqslant 0$. Hence
$v(f(t)-\p^{c} \bmf_0(t_0,...,t_m,t))<c$ which means $\bff(t_0,..,t_m,t)=(t_1,...,t_m,\bmf(t_0,...,t_m,t))\not\in i_K(X^c_K)$.
Thus \eqref{disjoint union} holds. By the proof of (\cite{as ii} 2.15), $X^{c}_L$ has exactly $p^{n-s}$ geometric connected components.
Moreover, for any $1\leqslant j\leqslant p^{n-s} $, $f(\s_j(h))-\p^{c} \bmf_0(0,...,0,\s_j(h))=0$, hence each disc $X^c_K\ti (\s_j(h)+D^{1,(\r(c))})$ contains at least one geometric connected component of $X^{c}_L$.
\end{proof}
In the following, we denote by $\ol\fX^c_{L,0}$ the connected component of $\ol\fX^c_L$ corresponding to the connected component $X^c_{L,0}$ of $X^c_L$ containing $(0,...,0,h)\in X^c_K\ti D^1$ defined over $L$.

\begin{proposition}\label{thnong isogeny}
There exists a canonical Cartesian diagram
\begin{equation}\label{cartesian diagram isogeny}
\xymatrix{\relax
\ol\fX^c_{L,0}\ar[d]\ar[r]^{\nu}&\A^1_{\ol F}\ar[d]^{\ol {f_c}}\\
\thnonc\ar[r]^{\mu}&\A^1_{\ol F}}
\end{equation}
where $\ol{f_{c}}$ is defined in \eqref{f_g}, such that if $\xi$ is the canonical coordinate of $\A^1_{\ol F}$, we have
\begin{equation*}
\mu^*(\xi)=\lt\{\begin{array}{ll}\dr a_0\ot\p^{-c},&\text{if}\ \ c>2,\\
(a''_1h\dr\p+\dr a_0)\ot \p^{-2},&\text{if}\ \ c=2.
\end{array}\rt.
\end{equation*}
Moreover, for any $\s\in G^c$, the following diagram
\begin{equation}\label{G action diagram}
\xymatrix{\relax
\ol \fX^c_{L,0}\ar[d]_{\s}\ar[r]^{\nu}&\A^1_{\ol F}\ar[d]^{d_{\s}}\\
\ol \fX^c_{L,0}\ar[r]^{\nu}&\A^1_{\ol F}}
\end{equation}
where $d_{\s}^{*}(\x)=\x-u_{\s}$ (\ref{u}), is commutative.
\end{proposition}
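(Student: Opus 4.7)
My plan is to describe $\ol\fX^c_{L,0}$ explicitly as a scheme over $\ol\fX^c_K\iso\thnonc$ by restricting the Cartesian square \eqref{tub explicit} to the disc around $h$, reading off a concrete presentation of the affinoid algebra of $X^c_{L,0}$, and reducing modulo the maximal ideal of the normalized integral model.

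First I would parametrize $X^c_{L,0}$, which lies inside $X^c_K\ti(h+D^{1,(\r(c))})$ by Proposition \ref{component}, via the coordinate $S=(T-h)/\p^{\r(c)}$ with $|S|\le 1$. Substituting $T=h+\p^{\r(c)}S$ in the defining equation $\bmf(T)=0$ and combining the decomposition $\bmf(T)=f(T)-\p^c\bmf_0(T)$ from \ref{ab} with the identity $f(h+\p^{\r(c)}S)=\p^c f_c(S)$ of Proposition \ref{isogeny f}, this equation becomes $f_c(S)=\bmf_0(h+\p^{\r(c)}S)$ after dividing by the unit $\p^c$. Both sides are integral, so the affinoid algebra of $X^c_{L,0}$ over $\cR^c_K$ is presented as $\cR^c_K\<S\>/(f_c(S)-\bmf_0(h+\p^{\r(c)}S))$; passing to the normalized integral model and reducing modulo the maximal ideal then yields an equation $\ol{f_c}(\ol S)=\ol{\bmf_0(h+\p^{\r(c)}S)}$ in the coordinate ring of $\ol\fX^c_{L,0}$ over $\thnonc$.

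Next I would compute the right-hand side in both cases. When $c>2$, Lemma \ref{p^2} gives $\a_i/\p^c\in\p\,\cR^c_{K,\OK}$ for $1\le i\le p^n-1$, so only $\ol{\a_0/\p^c}$ survives. When $c=2$, the analysis in \ref{ab} yields $\ol{\a_1/\p^2}=\ol{a''_1\b/\p^2}$ and $(\a_1/\p^2)\cdot\p S\in\p\,\cR^c_{K,\OK}\cdot S$ reduces to zero, leaving $\ol{(\a_0+a''_1h\b)/\p^2}$. Via the identification \eqref{Omega iso I} (under which $\a_0\ma\dr a_0$ and $\b\ma\dr\p$) and $\ol\fX^c_K\iso\thnonc$ from \ref{X int r}, these reductions match the stated formulas for $\mu^*(\xi)$. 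Setting $\nu^*(\xi)=\ol S$, the Cartesianity of \eqref{cartesian diagram isogeny} is then exactly the relation $\ol{f_c}(\ol S)=\mu^*(\xi)$ just derived; since both $\ol{f_c}$ and $\ol\fX^c_{L,0}\ra\ol\fX^c_K$ are finite \'etale of degree $p^s=\sharp G^c$ (by Proposition \ref{isogeny f} and \ref{p to G^r}), the induced morphism of reduced $\thnonc$-schemes into the fibered product is an isomorphism by a degree count.

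For the $G^c$-equivariance, I would use that $\s\in G^c$ acts trivially on the second factor of $\cO_L\wdhat\ot_{F_0}\OK$ and sends $T=h\ot 1$ to $\s(h)\ot 1$, so
\[
\s(S)-S=((\s(h)-h)\ot 1)/\p^{\r(c)}.
\]
This reduces modulo the maximal ideal to $\ol{(\s(h)-h)/\p^{\r(c)}}=-u_\s$ (by \ref{u}), after absorbing the factor $(\iota_1(\p)/\p)^{\r(c)}=(1-\b/\p)^{\r(c)}$, which reduces to $1$ since $\b/\p\in\p\cR^c_{K,\OK}$ for $c\ge 2$. Hence $\s^*\ol S=\ol S-u_\s=\nu^*d^*_\s\xi$, so \eqref{G action diagram} commutes. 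The principal source of technical friction I anticipate is the careful bookkeeping between the two embeddings of $\OK$ (and of the uniformizer $\p$) into $\cO_L\wdhat\ot_{F_0}\OK$, whose discrepancy $\b$ only vanishes modulo higher powers of the maximal ideal---especially in the $c=2$ case, where the first-order contribution $a''_1h\b$ must be retained while $\a_1\p S$ is discarded.
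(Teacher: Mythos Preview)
Your proposal is correct and follows essentially the same route as the paper: parametrize $X^c_{L,0}$ by the coordinate $S=(T-h)/\p^{\r(c)}$ (the paper's $T'$), use $\bmf=f-\p^c\bmf_0$ together with Proposition~\ref{isogeny f} to obtain the relation $f_c(S)=\bmf_0(h+\p^{\r(c)}S)$, reduce, and read off the Cartesian square. Two small points: (i) your presentation $\cR^c_K\<S\>/(\cdots)$ should be over $L$ (since $h\in L$ and $f_c\in\OL[T]$), as the paper makes explicit; (ii) your degree-count argument for Cartesianity tacitly uses that the fiber product $\thnonc\ti_{\A^1,\ol{f_c}}\A^1$ is connected, which holds because $\mu^*(\xi)$ is a nonzero linear form---the paper instead checks directly that the candidate special fiber is reduced and invokes \cite{as i}~4.1 to identify the normalized integral model. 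For the $G^c$-equivariance your direct computation of $\s(S)-S$ is cleaner than but equivalent to the paper's polynomial bookkeeping via $y_\s$; the reduction of $((\s(h)-h)\ot 1)/\p^{\r(c)}$ to $-u_\s$ ultimately rests on the same observation that coefficients $b\ot 1$ differ from the scalars $b$ by elements of $I_K\sub\p^c\cR^c_{K,\OK}$.
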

\begin{proof}

We consider the $K$-affinoid algebra $\cR^c_K$ (resp. $\cR^c_L$) as a sub-ring of the $L$-affinoid algebra $\cR^c_K \ot_K L$ (resp. $\cR^c_L \ot_K L$).
By \eqref{component}, we have
\begin{equation*}
X^c_{L,0}=i^{-1}_L(X^c_K\ti(h+D^{1,(\r(c))}))\cap X^c_L.
\end{equation*}
Hence $X^c_{L,0}$ is presented by the $L$-affinoid algebra
\begin{equation}\label{connect comp over L}
(\cR^{c}_L\ot_{K}L)\<T'\>/(\p^{\r(c)}T'+h-h\ot 1).
\end{equation}
By the isomorphism \eqref{key representation}, \eqref{connect comp over L} is isomorphic to
\begin{equation*}
(\cR^{c}_K\ot_{K}L)\<T,T'\>/(\bmf(T), \p^{\r(c)}T'+h-T),
\end{equation*}
which, after eliminating $T$ by the relation $\p^{\r(c)}T'+h-T=0$, is
\begin{equation}\label{connected comp}
(\cR^{c}_K\ot_{K}L)\<T'\>/(\bmf(\p^{\r(c)}T'+h)).
\end{equation}
In both cases, by \ref{isogeny f} and \ref{ab}, we have
\begin{equation*}
\bmf(\p^{\r(c)}T'+h)/\p^{c}\in \cR^{c}_{K,\OL}\<T'\>,
\end{equation*}
\begin{equation*}
\bmf(\p^{\r(c)}T'+h)/\p^{c+1}\notin\cR^{c}_{K,\OL}\<T'\>.
\end{equation*}
Then the image of $\cR^{c}_{K,\OL}\<T'\>$ by the canonical surjection
\begin{equation*}
(\cR^c_K\ot_KL)\<T'\>\ra(\cR^{c}_K\ot_{K}L)\<T'\>/(\bmf(\p^{\r(c)}T'+h)),
\end{equation*}
is
\begin{equation}\label{oB}
\cR^{c}_{K,\OL}\<T'\>/(\bmf(\p^{\r(c)}T'+h)/\p^{c}).
\end{equation}
Extending the scalars from $\OL$ to $\ol F$, we obtain the following $\ol F$-algebra:
\begin{itemize}
\item[(i)]
if $c>2$,
\begin{equation}\label{clos fiber >2}
   (\cR^{c}_{K,\OL}\ot_{\OL}\ol F)[T']/(\ol { f_{c}}(T')-\ol{\a_0/\p^{c}})~;
\end{equation}
\item[(ii)]
if $c=2$,
\begin{equation}\label{clos fiber =2}
(\cR^{c}_{K,\OL}\ot_{\OL}\ol F)[T']/(\ol{f_2}(T')- \ol{(\a_0+a_1''h\b)/\p^{2}}).
\end{equation}
\end{itemize}
From isomorphisms \eqref{alg C to X}, \eqref{Omega iso I} and the canonical exact sequence \eqref{hatO exact p}, we know that when $c>2$ (resp. $c=2$), $\ol{\a_0/\p^c}$ (resp. $\ol{(\a_0+a_1''h\b)/\p^{2}}$) is a non-zero linear term in $\ol F\ot_{\OL}\cR^{c}_{K,\OL}$.
Hence \eqref{clos fiber >2} and \eqref{clos fiber =2} are all reduced.
Then, by (\cite{as i} 4.1),
\begin{equation*}
\spf(\cR^{c}_{K,\OL}\<T'\>/(\bmf(\p^{\r(c)}T'+h)/\p^{c}))
\end{equation*}
is the normalized integral model of $X^c_{K,0}$ defined over $\OL$. Hence $\ol\fX^{c}_{L,0}$ is defined by the $\ol F$-algebra \eqref{clos fiber >2} (resp. \eqref{clos fiber =2}) when $c>2$ (resp. $c=2$). We put
\begin{equation*}
\nu:\ol\fX^c_{L,0}\ra\A^1_{\ol F}=\spec(\ol F[\x]),\ \ \ \nu^*(\x)=T'.
\end{equation*}
It follows form the isomorphism $\ol\fX^{c}_K\isora \thnonc$ (\ref{X int r}) that \eqref{cartesian diagram isogeny} is Cartesian.

For any $\s\in G^c$, let $y_{\s}(x)$ be a polynomial $b_rx^r +\cdots +b_0\in \OK[x]$, where $ r\leqslant p^n-1$, such that $y_{\s}(h)=(h-\s(h))/\p^{\r(c)}\in \OL$. We denote by $\mathbbm y_{\s}$ the polynomial
\begin{equation*}
\mathbbm y_{\s}(x)=(b_r\ot 1)x^r+\cdots+(b_0\ot 1)\in \cR^c_K[x].
\end{equation*}
The action of $\s$ on $\cR^c_K\<T\>/\bmf(T)$ (isomorphic to $ \cR^c_L$ \eqref{key representation}) is given by $:T\ma T-(\p^{\r(c)}\ot1)\mathbbm y(T)$.
Hence the action of $\s$ on \eqref{connected comp} is given by
\begin{equation*}
T'\ma T'-\mathbbm y_{\s}(\p^{\r(c)}T'+h)-((\p^{\r(c)}\ot1-\p^{\r(c)})/\p^{\r(c)})\mathbbm y_{\s}(\p^{\r(c)}T'+h)
\end{equation*}
and the induced action on \eqref{oB} is given by the same formula. Since $\p^{\r(c)}\ot1-\p^{\r(c)}\in \p^{c}\cR^c_{K,\OK}$ and $c>\r(c)$, the reduction of $(\p^{\r(c)}\ot1-\p^{\r(c)})/\p^{\r(c)}$ to the geometric special fiber is $0$.
For any $0\leqslant j\leqslant r$, $b_j\ot 1- b_j\in \p^c\cR^c_{K,\OK}$. Then, the reduction of $\mathbbm y_{\s}(\p^{\r(c)}T'+h)$ to the geometric special fiber is (\ref{u})
\begin{equation*}
\ol{\mathbbm y_{\s}(\p^{\r(c)}T'+h)}=\ol {y_{\s}(\p^{\r(c)}T'+h)}=\ol{y_{\s}(h)}=u_{\s}.
\end{equation*}
Hence, diagram \eqref{G action diagram char 0} is commutative.
\end{proof}

\section{Isogenies associated to extensions of type (II): the unequal characteristic case}

\subsection{}
In this section, we assume that $K$ has characteristic $0$ and that the residue field $F$ of $\OK$ is of finite type over a perfect field $F_0$. Let $K_0$ be the fraction field of the ring of Witt vectors $W(F_0)=O_{K_0}$ considered as a subfield of $K$. We denote by $m$ the dimension of the $F$-vector space $\O^1_F$, which is finite.

\subsection{}\label{present cartier}
Let $L$ be an object of $\fek$. We call an $\cO_{K_0}$-{\it presentation of Cartier type} of $\OL$ a pair $(\cA_L, j:\cA_L\ra \OL)$, where $\cA_L$ is a complete semi-local Noetherian $\cO_{K_0}$-algebra formally smooth of relative dimension $m+1$ over $\cO_{K_0}$ and $j$ a surjective homomorphism of $\cO_{K_0}$-algebra inducing an isomorphism $\cA_L/\m_{\cA_L}\isora \cO_L/\m_L$ such that the kernel of $j$ is generated by a non-zero divisor of $\cA_L$.

Let $L_1$, $L_2$ be two objects of $\fek$ and $(\cA_{L_1},j_1:\cA_{L_1}\ra\cO_{L_1})$, $(\cA_{L_2},j_2:\cA_{L_2}\ra\cO_{L_2})$ two $\cO_{K_0}$-presentations of Cartier type. A morphism $(g,\bmg)$ from $(\cA_{L_1},j_1)$ to $(\cA_{L_2},j_2)$ is a pair of $\cO_{K_0}$-homomorphisms $g:\cO_{L_1}\ra\cO_{L_2}$ and $ \bmg:\cA_{L_1}\ra\cA_{L_2} $ such that the diagram
\begin{equation}\label{A0 ra OL}
\xymatrix{\relax
\cA_{L_1}\ar[d]_{\bmg}\ar[r]^{j_1}&\cO_{L_1}\ar[d]^g\\
\cA_{L_2}\ar[r]^{j_2}&\cO_{L_2}}
\end{equation}
is commutative. We say that $(g,\bmg)$ is {\em finite and flat} if $\bmg$ is finite and flat and if the diagram \eqref{A0 ra OL} is co-Cartesian.

\begin{proposition}[\cite{as ii} 2.7, 2.8]\label{step 0 lemma}
\begin{itemize}
\item[(i)]
Any object of $\fek$ admits an $\cO_{K_0}$-presentation of Cartier type.
\item[(ii)]
Let $g:L_1\ra L_2$ be a morphism of $\fek$, and $(\cA_{L_1},j_1)$, $(\cA_{L_2},j_2)$ two $\cO_{K_0}$-presentations of Cartier type. Then there exist a morphism $\bmg:\cA_{L_1}\ra\cA_{L_2}$ such that $(g,\bmg)$ is a morphism of $\cO_{K_0}$-presentations of Cartier type.
\item[(iii)]
Let $g:L_1\ra L_2$ be a morphism of $\fek$ and $(g,\bmg)$ a morphism between $\cO_{K_0}$-presentations of Cartier type $(\cA_{L_1},j_1)$ and $(\cA_{L_2},j_2)$. If a uniformizer $\p_0$ of $K_0$ is not a uniformizer of any factor of $\cO_{L_1}$, then $(g,\bmg)$ is finite and flat.
\end{itemize}
\end{proposition}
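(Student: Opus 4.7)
The plan is to handle the three parts in order, drawing on formal smoothness arguments modeled on the equal characteristic case of \S 7.

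For (i), I construct $\cA_L$ by lifting residual data. Choose lifts $\tilde t_1, \dots, \tilde t_m \in \cO_L$ of a $p$-basis of the residue algebra $\cO_L/\m_L$ over $F_0$, together with a uniformizer (or system of uniformizers in the semi-local case) $\tilde\p \in \cO_L$. Form the polynomial algebra $R = \cO_{K_0}[T_0, T_1, \dots, T_m]$ and the $\cO_{K_0}$-homomorphism $R \ra \cO_L$ sending $T_0 \ma \tilde\p$ and $T_i \ma \tilde t_i$ for $i \ge 1$. Localize $R$ at the preimages of the maximal ideals of $\cO_L$ and complete. The resulting semi-local Noetherian $\cO_{K_0}$-algebra $\cA_L$ is formally smooth of relative dimension $m+1$ over $\cO_{K_0}$; with an Eisenstein-type choice of $\tilde\p$ over the subring generated by the $\tilde t_i$'s, the kernel of the induced surjection $j: \cA_L \ra \cO_L$ becomes the principal ideal generated by the minimal polynomial of $\tilde\p$, which is a non-zero divisor.

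For (ii), given $g:L_1\ra L_2$ and presentations $(\cA_{L_1},j_1),(\cA_{L_2},j_2)$, I obtain $\bmg$ as the inverse limit of lifts along the $\m_{\cA_{L_2}}$-adic filtration. The composition $g\circ j_1: \cA_{L_1}\ra\cO_{L_2}$ factors through each surjection $\cA_{L_2}/\m_{\cA_{L_2}}^n\ra\cO_{L_2}/\m_{L_2}^n\cO_{L_2}$, whose kernel is nilpotent. Formal smoothness of $\cA_{L_1}$ over $\cO_{K_0}$ then provides compatible lifts $\cA_{L_1}\ra\cA_{L_2}/\m_{\cA_{L_2}}^n$, whose limit is the desired $\bmg:\cA_{L_1}\ra\cA_{L_2}$ with $j_2\circ\bmg=g\circ j_1$.

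For (iii), the hypothesis that $\p_0$ is not a uniformizer of any factor of $\cO_{L_1}$ forces $\p_0 \in \m_{L_1}^2$, so the uniformizer data in the Cartier presentation of $\cA_{L_1}$ is not itself $\p_0$ up to unit. This is what keeps the reduction of $\bmg$ modulo $\m_{\cA_{L_1}}$ non-degenerate: the pushout $\cA_{L_2}\ot_{\cA_{L_1}}(\cA_{L_1}/\m_{\cA_{L_1}})$ identifies with $\cO_{L_2}/\m_{L_1}\cO_{L_2}$, which is an Artinian $\cO_{L_1}/\m_{L_1}$-algebra since $L_1\ra L_2$ is a finite morphism in $\fek$. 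Together with $\m_{\cA_{L_1}}$-adic completeness and Nakayama, this gives that $\bmg$ is finite. For flatness, the local criterion reduces the problem to showing that the generator of $\ker(j_1)$ is carried by $\bmg$ to a non-zero divisor in $\cA_{L_2}$; since $\bmg$ is finite and the diagram $(g,\bmg)$ is co-Cartesian upon quotient by this generator (the quotient on the bottom being $\cO_{L_2}$), the image has height one and hence is a non-zero divisor. The main obstacle is the last step: verifying that the generator of $\ker(j_1)$ maps to a generator of $\ker(j_2)$ up to unit in $\cA_{L_2}$, which is exactly where the hypothesis on $\p_0$ is essential, ruling out the degenerate scenario in which $\p_0$ itself would play the role of the Cartier generator; parts (i) and (ii) are technical but follow standard Cohen-theoretic and formal-smoothness templates.
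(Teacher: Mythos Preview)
The paper does not prove this proposition; it is quoted verbatim from \cite{as ii} 2.7, 2.8 and used as a black box. So there is no in-paper argument to compare against. Your outlines for (i) and (ii) are the standard Cohen-structure and formal-smoothness templates and match what Abbes--Saito do.

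Your treatment of (iii), however, contains a genuine gap. You assert that $\cA_{L_2}\ot_{\cA_{L_1}}(\cA_{L_1}/\m_{\cA_{L_1}})$ ``identifies with $\cO_{L_2}/\m_{L_1}\cO_{L_2}$'', but a priori you only have a surjection $\cA_{L_2}/\m_{\cA_{L_1}}\cA_{L_2}\twoheadrightarrow \cO_{L_2}/\m_{L_1}\cO_{L_2}$; the identification is essentially equivalent to the co-Cartesian property you are trying to establish, so this step is circular as written. You also acknowledge at the end that you have not verified that a generator of $\ker(j_1)$ maps to a generator of $\ker(j_2)$, and you do not actually use the hypothesis on $\p_0$ in any computation---you only gesture at where it should enter. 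The point in \cite{as ii} is that under the hypothesis the induced map on cotangent spaces $\m_{\cA_{L_1}}/\m_{\cA_{L_1}}^2 \to \m_{\cA_{L_2}}/\m_{\cA_{L_2}}^2$ is injective (this is where $\p_0\in\m_{L_1}^2$ is used, so that $\dr\p_0$ vanishes in the relevant differential module and the $(m+1)$ generators coming from $\cA_{L_1}$ remain independent); from this one gets $\m_{\cA_{L_1}}\cA_{L_2}$ is $\m_{\cA_{L_2}}$-primary, hence finiteness, and then a dimension count between the two regular local rings of the same dimension $m+2$ gives flatness and the co-Cartesian square simultaneously. Your sketch has the right shape but is missing this cotangent-space computation, which is the actual content of (iii).
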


\subsection{}\label{A ot O}
Let $L$ be an object of $\fek$, and $(\cA_{L},j:\cA_L\ra\OL)$ an $\cO_{K_0}$-presentation of Cartier type. We denote by $(\cA_L/\m^r_{\cA_L})\what\ot_{\cO_{K_0}}\OK$ the formal completion of $(\cA_L/\m^r_{\cA_L})\ot_{\cO_{K_0}}\OK$ relatively to the kernel of the homomorphism
\begin{equation}\label{surj first char 0}
(\cA_L/\m^r_{\cA_L})\ot_{\cO_{K_0}}\OK\ra \OL/\m^r_{\OL},\ \ \ a\ot b\ma ab,
\end{equation}
and by $\cA_L\wdhat\ot_{\cO_{K_0}}\OK$ the projective limit
\begin{equation}\label{AwdhatO}
\cA_L\wdhat\ot_{\cO_{K_0}}\OK=\plim_r((\cA_L/\m^r_{\cA_L})\what\ot_{\cO_{K_0}}\OK).
\end{equation}
We will always consider $\cA_L\wdhat\ot_{\cO_{K_0}}\OK$ as an $\OK$-algebra by the homomorphism
\begin{equation*}
\OK\ra\cA_L\wdhat\ot_{\cO_{K_0}}\OK,\ \ \ u\ma 1\ot u,
\end{equation*}
(in the following, we always abbreviate $1\ot u$ by $u$) and we will consider it as an $\cA_L$-algebra by
\begin{equation*}
\cA_{L}\ra\cA_L\wdhat\ot_{\cO_{K_0}}\OK,\ \ \ v\ma v\ot 1.
\end{equation*}
There is a canonical surjective homomorphism
\begin{equation}\label{surj third char 0}
\cA_L\wdhat\ot_{\cO_{K_0}}\OK\ra\cO_{L},
\end{equation}
induced by the surjections \eqref{surj first char 0}. We denote by $I_{L}$ its kernel.

\begin{proposition}[\cite{as ii} 2.9]
Let $L$ be an object of $\fek$, and $(\cA_{L},j:\cA_{L}\ra \cO_{L})$ an $\cO_{K_0}$-presentation of Cartier type. Then,
\begin{itemize}
\item[(i)]
The $\OK$-algebra $\cA_L\wdhat\ot_{\cO_{K_0}}\OK$ is formally of finite type and formally smooth over $\OK$ and the morphism
$\cA_L\wdhat\ot_{\cO_{K_0}}\OK/\m_{\cA_L\swdhat\ot_{\cO_{K_0}}\OK}\ra \OL/\m_{\OL}$ \eqref{surj third char 0} is an isomorphism.
\item[(ii)]
Let $L'$ be another object in $\fek$ and $(\cA_{L'},j':\cA_{L'}\ra \cO_{L'})$ an $\cO_{K_0}$-presentation of Cartier type. If a uniformizer $\p_0$ is not a uniformizer of any factor of $\OL$, then, any morphism $(\cA_{L},j)\ra(\cA_{L'},j')$ induces an isomorphism
\begin{equation}\label{key iso char 0}
\cA_{L'}\ot_{\cA_L}(\cA_L\wdhat\ot_{\cO_{K_0}}\OK)\isora \cA_{L'}\wdhat\ot_{\cO_{K_0}}\OK.
\end{equation}
\end{itemize}
\end{proposition}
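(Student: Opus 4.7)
The plan is to reduce to an explicit local presentation and then compute directly. Working with each local factor of the semi-local ring $\cA_L$ separately, we may assume $\cA_L$ is local. By formal smoothness of relative dimension $m+1$ over $\cO_{K_0}$ combined with Cohen's structure theorem, we may fix an isomorphism $\cA_L\iso V[[T_0,\dots,T_m]]$, where $V$ is a suitable Cohen ring over $\cO_{K_0}$ with residue field the local residue field of $\OL$. By adjusting the coordinates, we can arrange that $j(T_0)$ is a uniformizer of the corresponding local factor of $\OL$ and that the remaining $j(T_i)$ lift a system of generators of the residue differentials; the kernel of $j$, being principal by the Cartier-type hypothesis, is generated by an explicit distinguished element.

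For part (i), both formal smoothness and formal finite type of $\cA_L\wdhat\ot_{\cO_{K_0}}\OK$ over $\OK$ follow from this presentation. Base changing the formally smooth map $\cO_{K_0}\ra\cA_L$ to $\OK$ gives a formally smooth $\OK$-algebra, and this property is preserved by the successive completions defining \eqref{AwdhatO}. Meanwhile, the explicit presentation identifies $\cA_L\wdhat\ot_{\cO_{K_0}}\OK$, up to a controlled completion, with $\OK[[T_0,\dots,T_m]]$ in each local factor, which is manifestly formally of finite type over $\OK$. The residue field identification then reduces to observing that modulo the radical, the completed tensor product collapses to the residue field of the local factor of $\OL$, matching the isomorphism induced by \eqref{surj third char 0}.

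For part (ii), the hypothesis on $\p_0$ gives via Proposition \ref{step 0 lemma}(iii) that $\bmg\colon\cA_L\ra\cA_{L'}$ is finite and flat, so $\cA_{L'}$ is a finite flat $\cA_L$-module. Tensoring with such a module commutes with the filtered inverse limit defining the completion, provided the $\m^r_{\cA_L}$- and $\m^r_{\cA_{L'}}$-adic topologies on $\cA_{L'}$ are cofinal, which is immediate from finiteness. Then \eqref{key iso char 0} reduces, at each truncation level $r$, to the isomorphism
\begin{equation*}
\cA_{L'}\ot_{\cA_L}\bigl((\cA_L/\m^r_{\cA_L})\what\ot_{\cO_{K_0}}\OK\bigr)\isora (\cA_{L'}/\m^r_{\cA_{L'}})\what\ot_{\cO_{K_0}}\OK,
\end{equation*}
which follows from associativity of the tensor product before completion combined with flat base change of completions along compatible ideals.

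The main technical obstacle I expect is the careful bookkeeping of the double limit---over both the truncation level $r$ and the completion at each level---and in particular the verification that completion along the kernel of the map to $\OL/\m^r_L$ commutes with finite-flat base change along $\bmg$. This is precisely where the hypothesis that $\p_0$ is not a uniformizer of any factor of $\OL$ enters, via Proposition \ref{step 0 lemma}(iii); without the finite-flatness of $\bmg$ the relevant inverse limits need not commute with the tensor product, and the natural map in \eqref{key iso char 0} would fail to be an isomorphism.
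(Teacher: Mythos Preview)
Your proposal is correct and follows essentially the same route as the paper. Part (i) is not proved in the paper at all---it is simply cited from \cite{as ii} 2.9---so your sketch there goes beyond what is needed; for part (ii), the paper proceeds exactly as you outline: reduce to the local case, use Proposition~\ref{step 0 lemma}(iii) and Nakayama to get $\cA_{L'}$ finite free over $\cA_L$, establish the isomorphism at each truncation level with the filtration $\m^r_{\cA_L}\cA_{L'}$, pass to the inverse limit, and then invoke the cofinality $\m_{\cA_{L'}}^e\subseteq\m_{\cA_L}\cA_{L'}\subseteq\m_{\cA_{L'}}$ (where $e$ is the ramification index of $L'/L$) to identify the result with $\cA_{L'}\wdhat\ot_{\cO_{K_0}}\OK$. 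The only imprecision in your write-up is the displayed truncation-level isomorphism, whose right-hand side should read $(\cA_{L'}/\m^r_{\cA_L}\cA_{L'})\what\ot_{\cO_{K_0}}\OK$ before the cofinality step is applied; but you clearly have this step in mind.
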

\begin{proof}
Part (i) is proved in (\cite{as ii} 2.9). For part (ii), we may assume $L$ and $L'$ are fields. We denote by $e$ the ramification index of the extension $L'/L$. For any integer $r\geqslant 1$, we have the following canonical commutative diagram
\begin{equation*}
\xymatrix{
\cA_L\ar[r]^-(0.5){\mathrm{pr}_1}\ar[d]&(\cA_{L}/\m_{\cA_{L}}^r)\ot_{\cO_{K_0}}\OK\ar[d]\ar[r]^-(0.5){\eqref{surj first char 0}}&\OL/\m_L^r\ar[d]\\
\cA_{L'}\ar[r]^-(0.5){\mathrm{pr}'_1}&(\cA_{L'}/\m_{\cA_{L}}^r\cA_{L'})\ot_{\cO_{K_0}}\OK\ar[r]^-(0.5){g_{L'}}&\cO_{L}/\m_{L'}^{er}}
\end{equation*}
such that each square is Cartesian. We denote by $(\cA_{L'}/\m_{\cA_{L}}^r\cA_{L'})\what\ot_{\cO_{K_0}}\OK$ the formal completion of $(\cA_{L'}/\m_{\cA_{L}}^r\cA_{L'})\ot_{\cO_{K_0}}\OK$ relatively to the kernel of $g_{L'}$. Since $\cA_{L}$ is a Noetherian local ring, by \ref{step 0 lemma}(iii) and Nakayama's lemma, $\cA_{L'}$ is a finite free $\cA_L$-module. Then, we have
\begin{equation*}
\cA_{L'}\ot_{\cA_{L}}(\cA_{L}/\m_{\cA_{L}}^r)\what\ot_{\cO_{K_0}}\OK\isora(\cA_{L'}/\m_{\cA_{L}}^r\cA_{L'})\what\ot_{\cO_{K_0}}\OK.
\end{equation*}
After taking projective limit on both sides, we obtain
\begin{equation}\label{iso 1 char 0}
\cA_{L'}\ot_{\cA_{L}}(\cA_{L}\wdhat\ot_{\cO_{K_0}}\OK)\isora\plim_r((\cA_{L'}/\m_{\cA_{L}}^r\cA_{L'})\what\ot_{\cO_{K_0}}\OK).
\end{equation}
By the proof of (\cite{as ii} 2.7.3), we obtain that $\m_{\cA_{L'}}^e\sube\m_{\cA_L}\cA_{L'}\sube\m_{\cA_{L'}}$. Hence, for any integer $r\geqslant 1$, we have two surjections
\begin{equation*}
(\cA_{L'}/\m^{er}_{\cA_{L'}})\what\ot_{\cO_{K_0}}\OK\sra(\cA_{L'}/\m^{r}_{\cA_L}\cA_{L'})\what\ot_{\cO_{K_0}}\OK  \sra(\cA_{L'}/\m^{r}_{\cA_{L'}})\what\ot_{\cO_{K_0}}\OK,
\end{equation*}
which imply
\begin{equation}\label{iso 2 char 0}
\plim_r((\cA_{L'}/\m_{\cA_{L}}^r\cA_{L'})\what\ot_{\cO_{K_0}}\OK)\isora\cA_{L'}\wdhat\ot_{\cO_{K_0}}\OK.
\end{equation}
Combining \eqref{iso 1 char 0} and \eqref{iso 2 char 0}, we get (ii).
\end{proof}

\subsection{}\label{nim char 0}
Let $L$ be an object of $\fek$, and $(\cA_{L},j:\cA_{L}\ra \cO_{L})$ an $\cO_{K_0}$-presentation of Cartier type. We will introduce objects analogue of those defined in $\S7$, and denote them by the same notation. For any rational number $r>0$ and integer numbers $s,t>0$ such that $r=t/s$, we denote by $\cR^r_{L}$ the $K$-affinoid algebra
\begin{equation*}
\cR^r_{L}=(\cA_L\wdhat\ot_{\cO_{K_0}}\OK)\<I_{K}^s/\p^t\>\ot_{\OK}K,
\end{equation*}
by $X^r_{L}=\Sp(\cR^r_{L})$ the tubular neighborhood of thickening $r$ of the immersion
\begin{equation*}
\spf(\cO_{L})\ra \spf(\cA_L\wdhat\ot_{\cO_{K_0}}\OK),
\end{equation*}
which is smooth over $K$ (\cite{as ii} 1.7). By \ref{fini theo}, there exists a finite separable extension $K'$ of $K$ such that the normalized integral model of $X^c_L$ is defined over $K'$ (\ref{def nim}). We denote by $\cR^r_{L,\cO_{K'}}$ the supremum norm unit ball of $\cR^r_{L}\ot_K K'$ (\ref{model over K'}), by $\fX^r_L$ the normalized integral model of $X^r_L$ over $\cO_{\ol K}$ and by $\ol \fX^r_L$ the special fiber of $\fX^r_L$.

\subsection{}
In the following of this section, we assume that $p$ is not a uniformizer of $K$.
By (\cite{as ii} 1.14.3), there is an isomorphism of $\OK$-algebras
\begin{equation}\label{iso o ot o char 0}
\OK[[T_0,...,T_m]]\isora \cA_K\wdhat\ot_{\cO_{K_0}}\OK,
\end{equation}
such that the composition of it and \eqref{surj third char 0} $\OK[[T_0,...,T_m]]\ra \cA_K\wdhat\ot_{\cO_{K_0}}\OK\ra\OK$ maps $T_i$ to $0$. If $r$ is an integer $\geqslant 1$, we have an isomorphism of $K$-affinoid algebras
\begin{equation}\label{iso affinoid K char 0}
K\<T_0/\p^r,...,T_m/\p^r\>\isora \cR^r_K.
\end{equation}
The normalized integral model $\fX^r_K$ is defined over $\OK$, and we have an isomorphism
\begin{equation}\label{iso nim K char 0}
\OK\<T_0/\p^r,...,T_m/\p^r\>\isora (\cA_K\wdhat\ot_{\cO_{K_0}}\OK)\<I_K/\p^r\>=\cR^r_{K,\OK}.
\end{equation}
Hence the geometric closed fiber $\ol\fX^r_K$ is isomorphic to the affine scheme
\begin{equation*}
\spec \ol F[T_0/\p^r,...,T_m/\p^r].
\end{equation*}
In general, for any rational number $r>0$, the $K$-affinoid variety $X^r_K$ is isomorphic to $D^{m+1,(r)}$ and the rigid space $X_K=\bigcup_{r>0}X^r_K$ is isomorphic to $\mathring D^{m+1}$ (\ref{ram type ii notation2}).

By (\cite{as ii} 2.11.2), we have an isomorphism
\begin{equation}\label{I/I2 iso O char 0}
(I_K/I^2_K)\ot_{\OK}F\ra \what\O^1_{\OK/\cO_{K_0}}\ot_{\OK} F,
\end{equation}
such that for any $x\in\OK$ and $\wt x$ a lifting in $\cA_K$, the image of
$(\ol{1\ot x-\widetilde x\ot 1})\ot 1$ is $\dr x \ot 1.$
From (\cite{as ii} 1.14.3, 2.11.2), for any rational number $r>0$, the inverse of \eqref{I/I2 iso O char 0} gives an isomorphism $\ol \fX^r_K\isora\thnonr$. When $r$ is an integer, the construction of the isomorphism is similar to the equal characteristic case (\ref{X int r}).

\begin{remark}\label{ker A to O}
From \eqref{I/I2 iso O char 0}, we notice that for any element $\wt x\in \ker(\cA_K\ra\OK)$, the class $(\ol{\wt x\ot 1})\ot 1$ vanishes in $(I_K/I_K^2)\ot_{\OK}F$. It is equivalent to say that $\wt x\ot 1\in I_K^2+\p I_K$.
\end{remark}

\subsection{}\label{th to X char 0}
Let $L$ be a finite Galois extension of $K$ of group $G$ and conductor $c>1$. Let $(g,\bmg)$ be a finite and flat morphism from $(\cA_K,j_K:\cA_K\ra \OK)$ to $(\cA_L,j_L:\cA_L\ra \OL)$  (\ref{present cartier}).
By \eqref{key iso char 0}, $\bmg$ induces a finite flat morphism $\bmg\ot\id:\cA_K\wdhat\ot_{\cO_{K_0}}\OK\ra \cA_{L}\wdhat\ot_{\cO_{K_0}}\OK$. Hence, for any rational number $r>0$, it gives a morphism of smooth $K$-affinoid varieties $X^r_L\ra X^r_K$ (\cite{as ii} 1.6) which induces morphisms $\fX^r_L\ra\fX^r_K$ and $\ol\fX^r_K\ra\ol\fX^r_L$. For any $\s\in G$, there is a morphism $\bmg_{\s}$ making the following diagram commutative (\ref{step 0 lemma} iii)
\begin{equation}\label{gs}
\xymatrix{\relax
\cA_{L}\ar[d]_{\bmg_{\s}}\ar[r]^{j_L}&\cO_{L}\ar[d]^{\s}\\
\cA_{L}\ar[r]^{j_L}&\cO_{L}}
\end{equation}
The pair $(\s,\bmg_{\s})$ induces automorphisms of $X^r_L$, $\fX^r_L$ and $\ol\fX^r_L$. Notice that, $\bmg_{\s}$ is not unique in general and may not be an $\cA_K$-homomorphism. Hence the automorphisms of $X^r_L$, $\fX^r_L$ and $\ol\fX^r_L$ induced by all possible $\bmg_{\s}$ may not be uniquely determined by $\s\in G$. Luckily, by (\cite{as ii} 2.13), the induced automorphism of $\ol\fX^c_L$ is $\ol\fX_K^c$-invariant and independent of the choice of $\bmg_{\s}$. Hence $\ol\fX^c_L\ra \ol\fX^c_K$ is a finite \'etale $G$-torsor (\cite{as ii} 1.16.2). The geometric monodromy action of $G_K$ on $\ol\fX^c_{L}$ commutes with the action of $G$. Let $\ol\fX^c_{L,0}$ be a connected component of $\ol\fX^c_{L}$. The stabilizers of $\ol\fX^c_{L,0}$ via these two actions are $G^c$ and $G^c_K$, respectively (\cite{as ii} 2.15.1). Then, we get an isomorphism $G^c\isora \aut(\ol\fX^c_{L,0}/\ol\fX^c_K)$ and a surjection $G^c_K\ra \aut(\ol\fX^c_{L,0}/\ol\fX^c_K)$ which imply that $G^c$ is commutative (cf. \cite{as ii} 2.15.1). Composing with $\ol\fX^r_K\isora\thnonr$, the \'etale covering $\ol\fX^c_{L,0}\ra \thnonr$ induces a surjective homomorphism (\cite{as ii} 2.15.1)
\begin{equation*}
\p^{\ab}_1(\thnonr)\ra \gr^cG_K\ra G^c.
\end{equation*}

\subsection{}
In the following of this section, we assume the finite Galois extension $L/K$ of type (II) and we take again the notation and assumptions of \ref{ram type ii notation} and \ref{ram type ii notation2}. Let $(g,\bmg)$ be a finite and flat morphism as in \ref{th to X char 0}. Let $\widetilde h$ be a lifting of $h\in \OL$ in $\cA_L$. Since $\cA_K$ is a Noetherian local ring, by \ref{step 0 lemma} (iii) and Nakayama's lemma, we have that $\cA_L$ is a finite free $\cA_K$-module of rank $\sharp G$ and that $\cA_L=\cA_K[\wt h]$. Let
\begin{equation*}
\wt f(T)=T^{p^n}+\wt a_{p^n-1}T_{p^n-1}+\cdots+\wt a_0\in\cA_K[T],
\end{equation*}
be a lifting of $f[T]\in\OK[T]$ such that $\wt h$ is a zero. We have an  isomorphism
\begin{equation}\label{AL=AKh}
\cA_K[T]/(\wt f(T))\isora \cA_L,\ \ \ T\ma\wt h.
\end{equation}
By \eqref{key iso char 0} and the proof of (\cite{as ii} 1.6), we have an isomorphism
\begin{equation}\label{cart1 char 0}
\cR_K^r\ot_{\cA_K\swdhat{\ot}_{\cO_{K_0}}\OK}(\cA_L\wdhat{\ot}_{\cO_{K_0}}\OK) \isora \cR_L^r.
\end{equation}
It induces, for any rational numbers $r>r'>0$, an isomorphism
\begin{equation*}
\cR^{r}_K\ot_{\cR^{r'}_K}\cR_{L}^{r'}\isora \cR^{r}_L,
\end{equation*}
which gives a Cartesian diagram of rigid spaces
\begin{equation}\label{tub morphism char 0}
\xymatrix{\relax
X^{r}_L\ar[d]\ar[r]&X_L\ar[d]\\
X^{r}_K\ar[r]&X_K}
\end{equation}
where $X_K=\bigcup_{r>0}X^r_K$ and $X_L=\bigcup_{r>0}X^r_L$. We put
\begin{equation*}
\wt\bmf(T)=T^{p^n}+(\wt a_{p^n-1}\ot 1)T^{p^n-1}+\cdots+(\wt a_0\ot 1)\in (\cA_{K}\wdhat{\ot}_{\cO_{K_0}}\OK)[T].
\end{equation*}
From \eqref{key iso char 0} and \eqref{cart1 char 0}, we have a surjection
\begin{equation*}
\t_L:\cR^r_K\<T\>\ra \cR^r_L,\ \ \ T\ma\wt h\ot 1,
\end{equation*}
 which induces an isomorphism that we denote abusively also by
\begin{equation}\label{key representation char 0}
\t_L:\cR^r_K\<T\>/\wt\bmf(T)\isora \cR^r_L.
\end{equation}
In other terms, we have a co-Cartesian diagram of homomorphisms of $\cR^r_K$-algebras
\begin{equation}\label{cocart char 0}
\xymatrix{\relax
\cR^r_L&\cR^r_K\<T\>\ar[l]_{\t_L}\\
\cR^r_K\ar[u]&\cR^r_K\<T\>\ar[u]_{\phi}\ar[l]_{\t_K},
}
\end{equation}
where $\phi(T)=\wt\bmf(T)$ and $\t_K(T)=0$. Hence, taking the union of the $K$-affinoid varieties associated to each of the $K$-affinoid algebras in \eqref{cocart char 0} for $r\in\Q_{\geqslant 0}$, we obtain a Cartesian diagram
\begin{equation}\label{tub explicit char 0}
\xymatrix{\relax
X_L\ar[d]\ar[r]^-(0.5){i_L}&X_K\ti D^1\ar[d]^{\wt\bff}\\
X_K\ar[r]^-(0.5){i_K}&X_K\ti D^1,}
\end{equation}
where $i_L$, $\wt\bff$ and $i_K$ are the morphisms induced by $\t_L$, $\phi$ and $\t_K$.

\subsection{}\label{ab char 0}
In the following, for any $0\leqslant i\leqslant p^n-1$, we denote by $\a_i$ the element $ a_i-\wt a_i\ot 1\in I_K$ and fix $\wt\p\in\cA_K$ a lifting of $\p\in\OK$. When the conductor $c>2$, for each $1\leqslant i\leqslant p^n-1$, $v(a_i)\geqslant 2$ (\ref{p^2}). Let $ a'_i= \p^{-2} a_i\in\OK$ and $\wt a'_i\in \cA_K$ a lifting of $a'_i$. Then we have $\wt a_i=\wt\p^2\wt a_i'+\wt y_i$, where $\wt y_i\in\ker(\cA_K\ra\OK)$. We denote by $\a'_i$ the element $a'_i-\wt a'_i\ot 1\in I_K$ and by $\b$ the element $\p-\wt\p\ot1\in I_K$. Then, we have
\begin{equation*}
\a_i=(a'_i-\a'_i)(2\p\b-\b^2)+\p^2\a'_i+\wt y_i\ot 1.
\end{equation*}
Since $\a'_i$, $\b\in I_K \sub\p^c\cR^c_{K,\OK}$ and $\wt y_i\ot 1\in I_K^2+\p I_K\sub\p^{c+1}\cR^c_{K,\OK}$ (\ref{ker A to O}), we have $\a_i\in \p^{c+1}\cR^{c}_{K,\OK}$.
When $c=2$, we have $p=2$, $\sharp G=2$, $\deg f=2$ and $\r(c)=1$. Let $\wt a''_1\in \cA_K$ be a lifting of $a''_1=\p^{-1}a_1$. We have $\a_1=\wt\p\wt a''_1+\wt z_1$, where $z_1\in\ker(\cA_K\ra\OK)$. We denote by $\a''_1$ the element $a''_1-\wt a''_1\ot 1\in I_K$. Then we have
\begin{equation*}
\a_1=(a''_1-\a''_1)\b+\p \a''_1+\wt z_1\ot1.
\end{equation*}
Since $\a''_1,\b\in\p^c\cR^c_{K,\OK}$ and $\wt z_1\ot 1\in I_K^2+\p I_K\sub\p^{c+1}\cR^c_{K,\OK}$, we have $\a_1\in \p^c\cR^{c}_{K,\OK}$, and $\ol{\a_1/\p^c}=\ol{a''_1\b/\p^c}\in \cR^{c}_{K,\OK}\big/\p\cR^{c}_{K,\OK}$.

Put
\begin{equation*}
\wt\bmf_0(T)=\sum_{0\leqslant i\leqslant p^n-1}(\a_i/\p^c)\cdot T^i\in \cR^c_{K,\OK}[T].
\end{equation*}
We have
\begin{equation*}
\wt\bmf(T)=f(T)-\sum_{0\leqslant i\leqslant p^n-1}\a_iT^i=f(T)-\p^{c} \wt\bmf_0(T).
\end{equation*}

In the following, we fix an embedding $L\ra\ol K$.

\begin{proposition}\label{component char 0}
The $K$-affinoid $X^{c}_L$ has $\sharp(G/G^{c})=p^{n-s}$ geometric connected components. Let $\s_1,...,\s_{p^{n-s}}$ be liftings of all the elements of $G/G^{c}$ in $G$. We have
\begin{equation*}
i_L(X^{c}_L)\sube\coprod_{1\leqslant j\leqslant p^{n-s}} X^{c}_K\ti (\s_j(h)+D^{1,(\r(c))})\sube X_K\ti D^1.
\end{equation*}
\end{proposition}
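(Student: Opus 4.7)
The plan is to mirror the proof of Proposition~\ref{component} in the unequal characteristic setting, using the Cartesian diagrams~\eqref{tub morphism char 0} and~\eqref{tub explicit char 0} together with the decomposition $\wt\bmf(T) = f(T) - \p^c\wt\bmf_0(T)$ recorded in~\ref{ab char 0}. Combining the two Cartesian diagrams one sees that $i_L(X^c_L) = \wt\bff^{-1}(i_K(X^c_K)) \cap (X^c_K\ti D^1)$, so the asserted inclusion reduces to the statement that any geometric point $(t_0,\ldots,t_m,t)$ of $X^c_K\ti D^1$ lying outside every disc $X^c_K\ti(\s_j(h)+D^{1,(\r(c))})$ is sent by $\wt\bff$ outside $i_K(X^c_K)$. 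Via the isomorphisms~\eqref{iso affinoid K char 0} and~\eqref{iso nim K char 0}, this in turn becomes the valuation inequality $v(\wt\bmf(t_0,\ldots,t_m,t))<c$.

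To establish that inequality I would exploit the splitting $\wt\bmf(T) = f(T)-\p^c\wt\bmf_0(T)$. Since $\wt\bmf_0(T)\in\cR^c_{K,\OK}[T]$, the correction term has valuation at least $c$ on $X^c_K\ti D^1$. For the leading term, the description~\eqref{property Gc} of $G^c$ combined with the characterization of $\r(c)$ via the Herbrand function (\ref{ram type ii notation2}) forces $v(f(t))<c$ as soon as $t$ avoids all of the discs $\s_j(h)+D^{1,(\r(c))}$. Taking the minimum of these two valuations then yields $v(\wt\bmf(t_0,\ldots,t_m,t)) = v(f(t)) < c$, completing the verification of the inclusion.

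For the count of geometric connected components, I would appeal to the description given in~\ref{th to X char 0}: $\ol\fX^c_L\to\ol\fX^c_K$ is a finite \'etale $G$-torsor and the stabilizer of any connected component for the $G$-action is $G^c$, so $\ol\fX^c_L$ has exactly $[G:G^c]=p^{n-s}$ geometric connected components; Proposition~\ref{iso sp gen} then transports this count to the rigid space $X^c_L$. The main obstacle, and the place where the unequal characteristic proof departs from the equal characteristic one, lies in the coefficients $\a_i = a_i - \wt a_i\ot 1$ of $\wt\bmf_0$: lifting $f\in\OK[T]$ to $\wt f\in\cA_K[T]$ introduces extra error terms coming from $\ker(\cA_K\ra\OK)$, and it is only after the analysis of~\ref{ab char 0}---which uses Remark~\ref{ker A to O} to control the lifted difference inside $I_K^2+\p I_K$---that one obtains the integrality $\a_i\in \p^{c+1}\cR^c_{K,\OK}$ for $c>2$, and its analogue for $c=2$. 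With that integrality in hand, the remainder of the argument is a routine copy of the proof of Proposition~\ref{component}.
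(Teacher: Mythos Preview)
Your proposal is correct and follows exactly the approach the paper intends: its proof of Proposition~\ref{component char 0} consists of the single sentence ``The proof is the same as in the equal characteristic case (\ref{component}),'' and you have correctly unpacked what that means, including the one genuine wrinkle---that the coefficients $\a_i=a_i-\wt a_i\ot 1$ now involve the lift to $\cA_K$ and require the analysis of~\ref{ab char 0} (via Remark~\ref{ker A to O}) to secure $\wt\bmf_0\in\cR^c_{K,\OK}[T]$. Your appeal to~\ref{th to X char 0} and Proposition~\ref{iso sp gen} for the component count is equivalent to the paper's citation of the proof of \cite[2.15]{as ii}.
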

\begin{proof}
The proof is the same as in the equal characteristic case (\ref{component}).
\end{proof}
In the following, we denote by $\ol\fX^c_{L,0}$ the connected component of $\ol\fX^c_L$ corresponding to the connected component $X^c_{L,0}$ of $X^c_L$ containing $(0,...,0,h)\in X^c_K\ti D^1$ defined over $L$.

\begin{proposition}\label{hatO isogeny char 0}
There exists a canonical Cartesian diagram
\begin{equation}\label{cartesian diagram isogeny char 0}
\xymatrix{\relax
\ol\fX^c_{L,0}\ar[d]\ar[r]^{\nu}&\A^1_{\ol F}\ar[d]^{\ol {f_c}}\\
\thnonc\ar[r]^{\mu}&\A^1_{\ol F},}
\end{equation}
where $\ol{f_{c}}$ is defined in \eqref{f_g} and if $\xi$ is the canonical coordinate of $\A^1_{\ol F}$, we have
\begin{equation*}
\mu^*(\xi)=\lt\{\begin{array}{ll}\dr a_0\ot\p^{-c},&\text{if}\ \ c>2,\\
(a''_1h\dr\p+\dr a_0)\ot \p^{-2},&\text{if}\ \ c=2.
\end{array}\rt.
\end{equation*}
Moreover, for any $\s\in G^c$, the following diagram
\begin{equation}\label{G action diagram char 0}
\xymatrix{\relax
\ol \fX^c_{L,0}\ar[d]_{\s}\ar[r]^{\nu}&\A^1_{\ol F}\ar[d]^{d_{\s}}\\
\ol \fX^c_{L,0}\ar[r]^{\nu}&\A^1_{\ol F},}
\end{equation}
where $d_{\s}^{*}(\x)=\x-u_{\s}$ (\ref{u}), is commutative.
\end{proposition}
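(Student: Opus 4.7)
The plan is to imitate the proof of Proposition 7.7 (the equal characteristic case), working throughout with the lift $\wt f(T) \in \cA_K[T]$ in place of $f(T)$ and the presentation $\cR^c_K\<T\>/\wt\bmf(T) \isora \cR^c_L$ from \eqref{key representation char 0}. First, by Proposition \ref{component char 0}, the connected component $X^c_{L,0}$ of $X^c_L$ over $L$ sits inside $i_L^{-1}(X^c_K \times (h + D^{1,(\r(c))}))$, so after the change of variable $T = \p^{\r(c)}T' + h$ it is represented by the $L$-affinoid algebra
\begin{equation*}
(\cR^c_K \ot_K L)\<T'\>\big/\bigl(\wt\bmf(\p^{\r(c)}T' + h)\bigr).
\end{equation*}
The estimates in \ref{ab char 0} show $\wt\bmf(\p^{\r(c)}T' + h)/\p^c \in \cR^c_{K,\OL}\<T'\>$ but not in $\p \cdot \cR^c_{K,\OL}\<T'\>$, so dividing by $\p^c$ yields the candidate normalized integral model.

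Next I would reduce modulo the maximal ideal of $\OL$ and identify the resulting $\ol F$-algebra. The polynomial part reduces to $\ol{f_c}(T')$ by Proposition \ref{isogeny f}, exactly as in \eqref{f_g}, while the ``error'' $\wt\bmf_0(0,\dots,0,h) \bmod \p$ extracts a linear term coming from $\a_0/\p^c$ (resp.\ $(\a_0 + \wt a''_1 \wt h \b)/\p^2$ when $c = 2$). Here the crucial input is Remark \ref{ker A to O}: the ``correction'' elements $\wt y_i, \wt z_1 \in \ker(\cA_K \to \OK)$ that appear when lifting $a_i$ through $\cA_K$ satisfy $\wt y_i \ot 1, \wt z_1 \ot 1 \in I_K^2 + \p I_K$, so they vanish in $(I_K/I_K^2)\ot_{\OK} F$ and do not perturb the linear term. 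Via the isomorphism \eqref{I/I2 iso O char 0}, the class of $\a_0/\p^c$ in $\ol\fX^c_K \isora \thnonc$ is precisely $\dr a_0 \ot \p^{-c}$ (and an analogous computation using $\p - \wt\p \ot 1$ yields the formula when $c = 2$). The reduction is visibly geometrically reduced, so by (\cite{as i} 4.1) this really is the normalized integral model, and setting $\nu^*(\xi) = T'$ produces the asserted Cartesian square \eqref{cartesian diagram isogeny char 0}.

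For the compatibility with the $G^c$-action, I would fix for each $\s \in G^c$ a lift $\bmg_\s \colon \cA_L \to \cA_L$ as in \eqref{gs}; by \cite{as ii}~2.13 the induced automorphism of $\ol\fX^c_L$ is independent of this choice, so any convenient choice suffices. Writing $(h - \s(h))/\p^{\r(c)} = y_\s(h)$ for a polynomial $y_\s \in \OK[x]$ of degree $< p^n$, I lift $y_\s$ to $\wt y_\s \in \cA_K[x]$ and lift $\bmg_\s$ so that $\bmg_\s(\wt h) = \wt h - \wt\p^{\r(c)}\wt y_\s(\wt h)$. Under the presentation \eqref{key representation char 0}, the action of $\s$ on \eqref{connected comp} becomes
\begin{equation*}
T' \ \lma\ T' - \mathbbm y_\s(\p^{\r(c)}T' + h) - \bigl((\wt\p^{\r(c)}\ot 1 - \p^{\r(c)})/\p^{\r(c)}\bigr)\mathbbm y_\s(\p^{\r(c)}T' + h),
\end{equation*}
and since $\wt\p^{\r(c)} \ot 1 - \p^{\r(c)} \in \p^c \cR^c_{K,\OK}$ while $c > \r(c)$, the second correction reduces to $0$. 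The coefficients of $\mathbbm y_\s$ differ from those of $y_\s$ by elements of $I_K + \ker(\cA_K \to \OK)\ot 1 \sub \p^c \cR^c_{K,\OK}$ (again using Remark \ref{ker A to O}), so the reduction of $\mathbbm y_\s(\p^{\r(c)}T' + h)$ is $\ol{y_\s(h)} = u_\s$, giving exactly \eqref{G action diagram char 0}.

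The main obstacle compared to the equal characteristic case is precisely this careful bookkeeping of the auxiliary elements in $\ker(\cA_K \to \OK)$: one must repeatedly verify that they land in $I_K^2 + \p I_K$ so that they disappear in the relevant geometric special fibers. Once Remark \ref{ker A to O} is available, everything that was done for $\bmf$ and $f$ in \S7 carries over verbatim for $\wt\bmf$ and $\wt f$.
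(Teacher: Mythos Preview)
Your approach is the same as the paper's, and the first half (constructing the Cartesian square via the change of variable $T=\p^{\r(c)}T'+h$, dividing by $\p^c$, and identifying the reduction using \ref{isogeny f}, \ref{ab char 0} and \eqref{I/I2 iso O char 0}) matches the paper's argument essentially line for line.

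There is, however, a genuine gap in your treatment of the $G^c$-action. You write ``lift $\bmg_\s$ so that $\bmg_\s(\wt h)=\wt h-\wt\p^{\r(c)}\wt y_\s(\wt h)$,'' but this specific choice is not justified and is in general not achievable. The morphism $\bmg_\s$ is only an $\cO_{K_0}$-algebra endomorphism of $\cA_L$ lifting $\s$, produced by formal smoothness (\cite{as ii}~2.8); there is no mechanism for prescribing its value on the single element $\wt h$. All one can say is that $\bmg_\s(\wt h)$ and $\wt h-\wt\p^{\r(c)}\wt y_\s(\wt h)$ are both lifts of $\s(h)$, so their difference lies in $\ker(\cA_L\to\OL)=\bigoplus_{i<p^n}\ker(\cA_K\to\OK)\,\wt h^{\,i}$; write it as $\varepsilon(\wt h)$ with $\varepsilon\in\ker(\cA_K\to\OK)[x]$.

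Consequently your displayed formula for the action on $T'$ is missing the term $\wt\varepsilon(\p^{\r(c)}T'+h)/\p^{\r(c)}$, where $\wt\varepsilon$ has coefficients of the form $\wt x\ot 1$ with $\wt x\in\ker(\cA_K\to\OK)$. This is precisely where Remark~\ref{ker A to O} must be invoked once more: those coefficients lie in $I_K^2+\p I_K\subset\p^{c+1}\cR^c_{K,\OK}$, and since $c+1>\r(c)$ the extra term reduces to $0$ on the special fiber. The paper carries out exactly this computation. Your closing paragraph shows you understand the mechanism, but you applied it only to the coefficients of $\wt{\mathbbm y}_\s$ and overlooked the unavoidable $\varepsilon$-discrepancy in $\bmg_\s(\wt h)$ itself.
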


\begin{proof}
We consider the $K$-affinoid algebra $\cR^c_K$ (resp. $\cR^c_L$) as a sub-ring of the $L$-affinoid algebra $\cR^c_K \ot_K L$ (resp. $\cR^c_L \ot_K L$).
By \eqref{component char 0}, we have
\begin{equation*}
X^c_{L,0}=i^{-1}_L(X^c_K\ti(h+D^{1,(\r(c))}))\cap X^c_L.
\end{equation*}
Hence $X^c_{L,0}$ is presented by the $L$-affinoid algebra
\begin{equation}\label{connect comp over L char 0}
(\cR^{c}_L\ot_{K}L)\<T'\>/(\p^{\r(c)}T'+h-\wt h\ot 1).
\end{equation}
By the isomorphism \eqref{key representation char 0}, \eqref{connect comp over L char 0} is isomorphic to
\begin{equation*}
(\cR^{c}_K\ot_{K}L)\<T,T'\>/(\wt\bmf(T), \p^{\r(c)}T'+h-T),
\end{equation*}
which, after eliminating $T$ by the relation $\p^{\r(c)}T'+h-T=0$, is
\begin{equation}\label{connected comp char 0}
(\cR^{c}_K\ot_{K}L)\<T'\>/(\wt\bmf(\p^{\r(c)}T'+h)).
\end{equation}
In both cases, by \ref{isogeny f} and \ref{ab char 0}, we have
\begin{equation*}
\wt\bmf(\p^{\r(c)}T'+h)/\p^{c}\in \cR^{c}_{K,\OL}\<T'\>,
\end{equation*}
\begin{equation*}
\wt\bmf(\p^{\r(c)}T'+h)/\p^{c+1}\notin\cR^{c}_{K,\OL}\<T'\>.
\end{equation*}
Then the image of $\cR^{c}_{K,\OL}\<T'\>$ in \eqref{connected comp char 0} through the canonical surjective map
\begin{equation*}
(\cR^c_K\ot_KL)\<T'\>\ra(\cR^{c}_K\ot_{K}L)\<T'\>/(\wt\bmf(\p^{\r(c)}T'+h)),
\end{equation*}
is
\begin{equation}\label{oB char 0}
\cR^{c}_{K,\OL}\<T'\>/(\wt\bmf(\p^{\r(c)}T'+h)/\p^{c}).
\end{equation}
Extending scalars from $\OL$ to $\ol F$, we obtain the following $\ol F$-algebra:
\begin{itemize}
\item[(i)]
if $c>2$,
\begin{equation}\label{clos fiber >2 char 0}
   (\cR^{c}_{K,\OL}\ot_{\OL}\ol F)[T']/(\ol { f_{c}}(T')-\ol{\a_0/\p^{c}}).
\end{equation}
\item[(ii)]
if $c=2$,
\begin{equation}\label{clos fiber =2 char 0}
(\cR^{c}_{K,\OL}\ot_{\OL}\ol F)[T']/(\ol{f_2}(T')- \ol{(\a_0+a_1''h\b)/\p^{2}}).
\end{equation}
\end{itemize}
From the isomorphism \eqref{I/I2 iso O char 0} and the canonical exact sequence \eqref{hatO exact 0}, we know that when $c>2$ (resp. $c=2$), $\ol{\a_0/\p^c}$ (resp. $\ol{(\a_0+a_1''h\b)/\p^{2}}$) is a non-zero linear term in $\cR^{c}_{K,\OL}\ot_{\OL}\ol F$.
Hence \eqref{clos fiber >2 char 0} and \eqref{clos fiber =2 char 0} are all reduced.
Then, by (\cite{as i} 4.1),
\begin{equation*}
\spf(\cR^{c}_{K,\OL}\<T'\>/(\wt\bmf(\p^{\r(c)}T'+h)/\p^{c}))
\end{equation*}
is the normalized integral model of $X^c_{K,0}$ defined over $\OL$. Hence $\ol\fX^{c}_{L,0}$ is defined by the $\ol F$-algebra \eqref{clos fiber >2 char 0} (resp. \eqref{clos fiber =2 char 0}) when $c>2$ (resp. $c=2$). We put
\begin{equation*}
\nu:\ol\fX^c_{L,0}\ra\A^1_{\ol F}=\spec(\ol F[\x]),\ \ \ \nu^*(\x)=T'.
\end{equation*}
It follows form the isomorphism $\ol\fX^{c}_K\ra \thnonc$ that \eqref{cartesian diagram isogeny char 0} is Cartesian.

For any $\s\in G^c$, let $y_{\s}(x)=b_rx^r +\cdots +b_0\in \OK[x]$ be a polynomial, such that $y_{\s}(h)=(h-\s(h))/\p^{\r(c)}\in \OL$. We denote by $\wt y_{\s}(x)=\wt b_rx^r +\cdots +\wt b_0$ a lifting of $y_{\s}(x)$ in $\cA_K[x]$ and by $\wt{\mathbbm y}(x)$ the polynomial
\begin{equation*}
\wt {\mathbbm y}(x)=(\wt b_r\ot 1)x^r +\cdots +(\wt b_0\ot 1)\in (\cA_K\wdhat\ot_{\cO_{K_0}}\OK)[x].
\end{equation*}
Let $\bmg_{\s}:\cA_L\ra\cA_L$ be a homomorphism as in $\eqref{gs}$. We denote by $\bfg_{\s}$ the induced morphism of $\bmg_{\s}$ on \eqref{oB char 0}. By \eqref{AL=AKh}, we have
\begin{equation*}
\ker(\cA_L\ra\OL)=\bp_{i=0}^{p^n-1}\ker(\cA_K\ra\OK) \wt h^i.
\end{equation*}
Hence, we have $\bmg_{\s}(\wt h)=\wt h- \wt \p^{\r(c)}\wt y_{\s}(\wt h)+\varepsilon(\wt h)$, where $\varepsilon$ is a polynomial with coefficients in $\ker(\cA_K\ra \OK)$. Then, we have
\begin{equation*}
\bfg_{\s}(T')=T'-\wt{\mathbbm y}_{\s}(\p^{\r(c)}T'+h)+\D(T'),
\end{equation*}
where
\begin{equation*}
\D(T')=-((\wt\p^{\r(c)}\ot1-\p^{\r(c)})/\p^{\r(c)})\wt{\mathbbm y}_{\s}(\p^{\r(c)}T'+h)+\wt\varepsilon(\p^{\r(c)}T'+h)/\p^{\r(c)},
\end{equation*}
and $\wt\varepsilon$ is a polynomials with coefficients in $J=\{\wt x\ot 1\in\cA\wdhat\ot_{\cO_{K_0}}\OK;\wt x\in \ker(\cA_K\ra \OK)\}$. Since $J\sube \p^{c+1}\cR_{K,\OK}$ (\ref{ker A to O}),  $\wt\p^{\r(c)}\ot1-\p^{\r(c)}\in\p^{c}\cR_{K,\OK}$ and $c>\r(c)$, it is easy to see that the reduction of $\D(T')$ to $\ol\fX^c_{L,0}$ is zero. For any $0\leqslant j\leqslant r$, we have $\wt b_j\ot 1- b_j\in \p^c\cR^c_{K,\OK}$. Then
\begin{equation*}
\ol {\wt{\mathbbm y}_{\s}(\p^{\r(c)}T'+h)}=\ol {\wt y_{\s}(\p^{\r(c)}T'+h)}=\ol{\wt y_{\s}(h)}=u_{\s}.
\end{equation*}
Hence, by (\cite{as ii} 2.13), the diagram \eqref{G action diagram char 0} is commutative.
\end{proof}

\section{The refined Swan conductor of an extension of type (II)}
\subsection{}
In this section, we assume either that $K$ has characteristic $p$ or that it has characteristic $0$ and that $p$ is not a uniformizer of $K$. Let $L$ be a finitely generated extension of $K$ of type (II) and we take again the notation and assumptions of \ref{ram type ii notation}, \ref{ram type ii notation2}, \ref{thnong isogeny} and \ref{hatO isogeny char 0}.

\begin{proposition}\label{isogeny xi}
The fibre product $\ol\fX^{c}_{L,0}\ti_{\thnonc}\xic$ \eqref{xi} is a connected affine scheme.
\end{proposition}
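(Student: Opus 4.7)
The plan is to combine the Cartesian square of Proposition~\ref{thnong isogeny} (or Proposition~\ref{hatO isogeny char 0} in the mixed characteristic case) with an explicit computation of the restriction of $\mu$ to $\xic$. The identity $\ol\fX^{c}_{L,0}=\thnonc\ti_{\A^1_{\ol F},\ol{f_c}}\A^1_{\ol F}$ together with the transitivity of fibre products yields
\begin{equation*}
\ol\fX^{c}_{L,0}\ti_{\thnonc}\xic\isora \xic\ti_{\A^1_{\ol F},\ol{f_c}}\A^1_{\ol F}.
\end{equation*}
It therefore suffices to analyse the composition $\mu|_{\xic}\colon\xic\ira\thnonc\xra{\mu}\A^1_{\ol F}$ and to show that the pullback of the isogeny $\ol{f_c}$ along it is integral.

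First I would show that $\mu|_{\xic}$ is a non-zero linear morphism. The closed immersion $\xic\ira\thnonc$ is dual to the surjection $\what\O^1_{\OK/F_0}\ot_{\OK}F\sra\O^1_F$ (resp.\ its analogue \eqref{hatO exact 0} in the mixed characteristic case), whose kernel is the image of $\m_K/\m_K^2$, generated by the class of $\dr\p$. The restriction of $\mu^*(\x)$ to $\xic$ is therefore its image in $\O^1_F\ot_F\ol F\ot\m^{-c}_{\ol K}/\m^{-c+}_{\ol K}$: when $c>2$ this equals $\dr\bar a_0\ot\p^{-c}$, and when $c=2$ the term $a''_1h\dr\p$ dies in $\O^1_F$, leaving again $\dr\bar a_0\ot\p^{-2}$. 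Since $\bar a_0=\bar h^{p^n}$ with $\bar h$ generating the non-trivial inseparable extension $E/F$, Lemma~\ref{frob}(i) identifies $\dr\bar a_0$ with a generator of the one-dimensional subspace $V\sub\O^1_F$, so in particular $\dr\bar a_0\neq 0$.

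Finally I would conclude by an explicit affine description. Fixing an isomorphism $\xic\iso\A^n_{\ol F}$ with $n=\dim_F\O^1_F$, choose coordinates $x_1,\dots,x_n$ so that $\mu|_{\xic}$ is given by a linear form $L(x)=c_1x_1+\cdots+c_nx_n$; by the previous step we may assume $c_1\neq 0$. Then
\begin{equation*}
\xic\ti_{\A^1_{\ol F},\ol{f_c}}\A^1_{\ol F}=\spec\lt(\ol F[x_1,\dots,x_n,T']/(\ol{f_c}(T')-L(x))\rt).
\end{equation*}
The defining polynomial is of degree one in $x_1$ with invertible leading coefficient $-c_1\in\ol F^{\ti}$, hence irreducible in $\ol F[x_2,\dots,x_n,T'][x_1]$; the quotient is an integral domain, so the scheme is integral, and in particular affine and connected. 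The only delicate step is the identification of $\mu|_{\xic}$ with $\dr\bar a_0\ot\p^{-c}$; after this has been made, the irreducibility check is essentially mechanical.
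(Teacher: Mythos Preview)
Your proof is correct and follows essentially the same route as the paper: base-change the Cartesian square \eqref{cartesian diagram isogeny} (resp.\ \eqref{cartesian diagram isogeny char 0}) along $\xic\hookrightarrow\thnonc$, identify the restricted linear form $\mu'$ with $\dr\bar a_0\ot\p^{-c}$ via the surjection $\what\O^1_{\OK}\ot F\twoheadrightarrow\O^1_F$ (so that the $\dr\p$-part dies when $c=2$), and observe that this form is nonzero so that the pullback of $\ol{f_c}$ remains connected. Your final irreducibility check via linearity in $x_1$ and your appeal to Lemma~\ref{frob}(i) for $\dr\bar a_0\neq 0$ simply make explicit what the paper leaves implicit.
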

\begin{proof}
The image of $\dr a_0\ot 1$ and $(a''_1h\dr \p+\dr a_0)\ot 1$ by the canonical map from $\what\O^1_{\OK/F_0}\ot_{\OK}\ol F$ (resp. $\what\O^1_{\OK/\cO_{K_0}}\ot_{\OK}\ol F $) to $\O^1_F\ot_F \ol F$ is $\dr \bar a_0\ot 1$, which is a non-zero element. So we have a Cartesian diagram
\begin{equation}\label{isogeny xi1}
\xymatrix{
\ol\fX^c_{L,0}\ti_{\thnonc}\xic\ar[d]\ar[r]&\A^1_{\ol F}\ar[d]^{\ol{f_c}}\\
\xic\ar[r]^{\mu'}&\A^1_{\ol F}}
\end{equation}
where $\mu'^*(\xi)=\dr \bar a_0\ot \p^{-c}$.
Since $\dr \bar a_0\ot \p^{-c}$ is a non-zero linear term in the affine space $\xic$, $\ol\fX^{c}_{L,0}\ti_{\thnonc}\xic$ is connected.
\end{proof}

\subsection{}\label{proof of proposition}
{\em Proof of \ref{thlog factor xi}}.
By (\cite{as ii} 5.13), both in the equal and unequal characteristic case, we have a commutative diagram
\begin{equation}\label{as ii diagram}
\xymatrix{\relax
\p^{\ab}_1(\thlogc)\ar[r]\ar@{->>}[d]_{\g_1}&\p^{\ab}_1(\thnonc)\ar@{->>}[d]^{\g_2}\\
G^{c}_{\log}\ar@{=}[r]&G^{c}.}
\end{equation}
The surjection $\g_1$ factors through $\p^{\alg}_1(\thlogc)$ (\ref{isogeny}). By \ref{thnong isogeny} and \ref{hatO isogeny char 0}, $\g_2$ also factors through $\p^{\alg}_1(\thnonc)$. Combining \eqref{as ii diagram} and the following canonical commutative diagram
\begin{equation*}
\xymatrix{\relax
\p^{\ab}_1(\thlogc)\ar[r]\ar@{->>}[d]&\p^{\ab}_1(\xic)\ar@{->>}[d]\ar[r]&\p^{\ab}_1(\thnonc)\ar@{->>}[d]\\
\p^{\alg}_1(\thlogc)\ar[r]&\p^{\alg}_1(\xic)\ar[r]&\p^{\alg}_1(\thnonc),}
\end{equation*}
we obtain that
\begin{equation}\label {key diagram}
\xymatrix{\relax
\p^{\alg}_1(\thlogc)\ar[r]\ar@{->>}[d]&\p^{\alg}_1(\xic)\ar[r]&\p^{\alg}_1(\thnonc)\ar@{->>}[d]\\
G^{c}_{\log}\ar@{=}[rr]& &G^{c}}
\end{equation}
is commutative. The composition of morphisms $\p^{\alg}_1(\xic)\ra\p^{\alg}_1(\thnonc)\ra G^c$ corresponds the isogeny $\ol\fX^c_{L,0}\ti_{\thnonc}\xic\ra\xic$ (cf. \eqref{isogeny xi1}). Hence, by \eqref{key diagram}, we have a commutative diagram
\begin{equation}\label{diagram refined swan}
\xymatrix{\relax
 &\Hom(\p^{\alg}_1(\xic),\F_p)\ar[r]\ar[d]&\O^1_F\ot_{F}\m^{-c}_{\ol K}/\m^{-c+}_{\ol K}\ar[d]\\
\Hom(G^{c},\F_p)\ar[r]\ar[ur]&\Hom(\p^{\alg}_1(\thlogc),\F_p)\ar[r]&\O^1_F(\log)\ot_{F}\m^{-c}_{\ol K}/\m^{-c+}_{\ol K},}
\end{equation}
which conclude \ref{thlog factor xi}.

\subsection{}\label{proof of theorem}
{\em Proof of \ref{theorem rsw}.}
Since the surjection $\p^{\alg}_1(\xic)\ra G^c$ is obtained by pulling-back the isogeny $\ol{f_c}:\A_{\ol F}^1\ra\A_{\ol F}^1$ by $\mu'$ (cf. \eqref{isogeny xi1}),
which is an \'etale $G^c$-torsor with the action of $G^c$ given by $d_{\s}$ for $\s\in G^c$ \eqref{G action diagram}, \eqref{G action diagram char 0}.
With notation in \ref{notation for rsw}, we denote by $ \tilde f_{c,\c}(\xi)$ the polynomial
\begin{equation*}
\tilde f_{c,\c}(\xi)=\lt(\prod_{\s\in G-G^{c}}u_{\s}\rt)(\xi^p-\bar f_{c,\c}^{p-1}(u_{\t})\xi)\in \ol F[\xi].
\end{equation*}
Observe that $\tilde f_{c,\c}(\bar f_{c,\c}(\xi))=\ol {f_{c}}(\xi)$, hence the isogeny $\ol{f_c}$ is the composition of two isogenies
\begin{equation*}
\A^1_{\ol F}\xra{\bar f_{c,\c}}\A^1_{\ol F}\xra{\tilde f_{c,\c}}\A^1_{\ol F}.
\end{equation*}
For any $\s\in \ker \c$, $\bar f_{c,\c}^*(\xi-u_{\s})=\bar f_{c,\c}^*(\xi)$, i.e. $\bar f_{c,\c}d_{\s}=\bar f_{c,\c}$. Hence the isogeny $\tilde f_{c,\c}:\A^1_{\ol F}\ra \A^1_{\ol F}$ is an \'etale $(G^c/\ker\c)$-torsor. Then, the surjection $\p^{\alg}_1(\xic)\ra G^c\xra{\c}\F_p$ corresponds to the pull-back of $\tilde f_{c,\c}$ by $\mu'$ and the $\F_p$-action on this torsor is given by $1^*:\xi\ma \xi-\bar f_{c,\c}(u_{\t})$. We have the following Cartesian diagram
\begin{equation}\label{serre}
\xymatrix{\relax
  \F_p\ar[d]_{\id}\ar[r]^{\phi} & \A^1_{\ol F}\ar[d]_{\l_2}\ar[r]^{\tilde f_{c,\c}} & \A^1_{\ol F}\ar[d]^{\l_1} \\
  \F_p\ar[r] & \A^1_{\ol F}\ar[r]^{\mathrm{L}} & \A^1_{\ol F}}
\end{equation}
where $\mathrm{L}$ denotes the Lang's isogeny defined by $\mathrm{L}^*(\xi)=\xi^p-\xi$. The morphisms $\l_1$, $\l_2$ and $\phi$ are given as follows
\begin{equation*}
\l_1^*(\xi)=-\xi\bigg/\bigg(\prod\limits_{\s\in G-G^{c}}u_{\s}\bigg)\bar f_{c,\c}^p(u_{\t}),
\end{equation*}
\begin{equation*}
\l_2^*(\xi)=-\xi/\bar f_{c,\c}(u_{\t}),
\end{equation*}
\begin{equation*}
\phi(1)=-\bar f_{c,\c}(u_{\t}).
\end{equation*}
The sign is chosen in order that, for any $\s\in G^c$, the translation by $\phi(\c(\s))$ is induced by $d_{\s}$. Consequently, $\p^{\alg}_1(\xic)\ra G^c\xra{\c}\F_p$ corresponds to the pull-back of $\mathrm{L}$ by $\l_1\mu'$. Hence the image of $\c\in\Hom(G^{c},\F_p)$ in $\O^1_F\ot_{F}\m^{-c}_{\ol K}/\m^{-c+}_{\ol K}$ \eqref{diagram refined swan} is
\begin{equation*}
-\dr \bar a_0\ot\f{\p^{-c}}{\lt(\prod_{\s\in G-G^{c}}u_{\s}\rt)\bar f_{c,\c}^p(u_{\t})}\in \O^1_F\ot_{F}\m^{-c}_{\ol K}/\m^{-c+}_{\ol K}.
\end{equation*}
Then the theorem follows from \eqref{diagram refined swan}.

\section{Comparison of Kato's and Abbes-Saito's characteristic cycles}
\subsection{}
In this section, let $L$ be a finite Galois extension of $K$ of type (II) and we take again the notation and assumptions of \ref{ram type ii notation} and \ref{ram type ii notation2}. Let $C$ be an algebraically closed field of characteristic zero. We fix a non-trivial character $\psi_0:\F_p\ra C^{\ti}$. Any character $\c:G^c\ra C^{\ti}$ factors uniquely through $G^c\ra\F_p\xra{\psi_0} C^{\ti}$. We denote by $\bar\c$ the induced character $G^c\ra \F_p$.

\begin{proposition}\label{rank 1 theorem}
Let $\c: G\ra C^{\ti}$ be a character of $G$ such that its restriction to $G^c$ is non-trivial. Let $\t\in G^c$ be a lifting of $1\in \F_p$ in $G^c$ through $\bar\c:G^c\ra\F_p$. Then Kato's swan conductor with differential values $\sw_{\psi_0(1)}(\c)$ is given by \eqref{u}, \eqref{notation for rsw}
\begin{equation*}
\sw_{\psi_0(1)}(\c)=[\p^c]+[-\bar f^p_{c,\bar\c}(u_{\t})]+\sum_{\s\in G-G^c}[u_{\s}]-[\dr\bar a_0]\in \skl.
\end{equation*}
\end{proposition}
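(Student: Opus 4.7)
The plan is a direct computation from the definition \eqref{swan} of $\sw$ together with the formulas $s_G(\s)=[\dr\bar h]-[h-\s(h)]$ for $\s\neq1$ and $s_G(1)=-\sum_{\s\neq1}s_G(\s)$ from \ref{def s}. Since $\c$ is one-dimensional and non-trivial on $G^c$, hence non-trivial on $G$, I have $\<\c,1\>=0$ and $\dim\c=1$, so $\sw_{\psi_0(1)}(\c)=s_G(\c)+\ve(\x)$ with $\x=\psi_0(1)$. Using $\sum_{\s\in G}\c(\s)=0$, a straightforward rearrangement produces $s_G(\c)=-\sharp G\cdot[\dr\bar h]+\sum_{\s\neq1}(1-\c(\s))[h-\s(h)]$, and Lemma \ref{frob}(v) gives $\sharp G\cdot[\dr\bar h]=p^n[\dr\bar h]=[\dr\bar a_0]$. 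I would then split the remaining sum along $G-\{1\}=(G^c-\{1\})\sqcup(G-G^c)$, and expand each term as $[h-\s(h)]=v(h-\s(h))[\p]+[u_\s]$ (valid by definition \eqref{def u} of $u_\s$ and because $L/K$ has ramification index one, so $\p$ is a uniformizer of $\OL$).

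For the $G-G^c$ part, the key step is to show $\sum_{\s\in G-G^c}\c(\s)[h-\s(h)]=0$ by coset-averaging. For $\s\in G-G^c$ and $\r\in G^c$, the valuation bound $v(h-\r(h))\ge\r(c)>v(h-\s(h))$, applied to $h-\s\r(h)=(h-\s(h))+\s(h-\r(h))$, forces $[h-\s\r(h)]=[h-\s(h)]$; summing over a coset $\s G^c\subset G-G^c$ yields $\c(\s)[h-\s(h)]\sum_{\r\in G^c}\c(\r)=0$ because $\c|_{G^c}$ is non-trivial. Hence the $G-G^c$ contribution to $s_G(\c)$ collapses to $\big(\sum_{\s\in G-G^c}v(h-\s(h))\big)[\p]+\sum_{\s\in G-G^c}[u_\s]$.

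For the $G^c$ part, only $\s\in G^c\setminus\ker\bar\c$ contribute to the $[u_\s]$-piece, and I would write each such $\s$ uniquely as $\t^j\r$ with $j\in\F_p^{\ti}$ and $\r\in\ker\bar\c$, using the $\F_p$-vector space structure of $G^c$ and that $\ker\bar\c$ has index $p$. Then $\c(\t^j\r)=\x^j$ and, by additivity of $u$ on $G^c$ (\ref{u}), $u_{\t^j\r}=ju_\t+u_\r$. Since $\bar f_{c,\bar\c}(T)=\prod_{\r\in\ker\bar\c}(T+u_\r)$ \eqref{notation ker c} is an additive polynomial, hence $\F_p$-linear in $T$, one obtains $\sum_{\r\in\ker\bar\c}[ju_\t+u_\r]=[\bar f_{c,\bar\c}(ju_\t)]=[j]+[\bar f_{c,\bar\c}(u_\t)]$. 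Summing over $j$ and using $\sum_{j=1}^{p-1}(1-\x^j)=p$, the rule $p[x]=[x^p]$, Wilson's congruence $\sum_j[j]=[-1]$, and $\ve(\x)=\sum_j\x^j[j]$ produces $-\ve(\x)+[-\bar f^p_{c,\bar\c}(u_\t)]$ for the $[u_\s]$-piece, together with $\r(c)p^s[\p]$ for the $[\p]$-piece (using $\sum_{\s\in G^c-\{1\}}(1-\c(\s))=p^s$). Assembling the pieces via $c=p^s\r(c)+\sum_{\s\in G-G^c}v(h-\s(h))$ and adding $\ve(\x)$ to pass from $s_G(\c)$ to $\sw_{\psi_0(1)}(\c)$, the $\ve(\x)$-terms cancel and the claimed formula emerges. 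The two steps where care is needed are the coset-averaging for $G-G^c$ (which crucially exploits non-triviality of $\c|_{G^c}$) and the recognition of the additive-polynomial identity $\bar f_{c,\bar\c}(ju_\t)=j\bar f_{c,\bar\c}(u_\t)$, which is what pulls the $[\bar f^p_{c,\bar\c}(u_\t)]$-term out of the $G^c$ sum; once these are in place the rest is bookkeeping in $S_{L/K}$.
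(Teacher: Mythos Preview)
Your proof is correct and follows essentially the same route as the paper: both split the sum $\sum_{\s\neq1}(1-\c(\s))[h-\s(h)]$ along $(G^c-\{1\})\sqcup(G-G^c)$, kill the $\c$-weighted part of the $G-G^c$ contribution by the same coset-averaging argument, recognize the product over $\ker\bar\c$ as $\bar f_{c,\bar\c}$, and use its $\F_p$-linearity together with Wilson's identity $\sum_{j\in\F_p^\ti}[j]=[-1]$ to extract $[-\bar f^p_{c,\bar\c}(u_\t)]$; the remaining $[\p]$-bookkeeping and the identity $\sharp G\,[\dr\bar h]=[\dr\bar a_0]$ from \ref{frob}(v) then give the formula. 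The only cosmetic difference is that the paper fixes an $\F_p$-basis $\t_1=\t,\t_2,\dots,\t_s$ of $G^c$ adapted to $\bar\c$ and writes the sum over $\F_p^s$, whereas you parametrize $G^c\setminus\ker\bar\c$ directly as $\t^j\r$ with $j\in\F_p^\ti$ and $\r\in\ker\bar\c$; these are the same decomposition.
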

\begin{proof}
By definition \eqref{swan}, we have
\begin{eqnarray}
\sw_{\psi_0(1)}(\c)&=&\sum_{\s\in G-\{1\}}([h-\s(h)]-[\dr \bar h])\ot(1-\c(\s))+\sum_{r\in \F_p^{\ti}}[r]\ot\psi_0(r)\label{sw 1 dim}\\
&=&\sum_{\s\in G^c-\{1\}}[h-\s(h)]\ot(1-\c(\s))+\sum_{r\in \F_p^{\ti}}[r]\ot\psi_0(r)\label{G^c}+ \nonumber\\
& &\sum_{\s\in G-G^c}[h-\s(h)]-\sum_{\s\in G-G^c}[h-\s(h)]\ot\c(\s)- \nonumber\\
& &\sum_{\s\in G-\{1\}}[\dr \bar h]\ot(1-\c(\s)). \nonumber
\end{eqnarray}
Choose an $\F_p$-basis $\t_1=\t,\t_2,...,\t_s$ of $G^{c}$ such that $\bar\c(\t_1)=1\in\F_p$ and, that for any $2\leqslant j\leqslant s$, $\bar\c(\t_j)=0$. Then, by \ref{isogeny lemma}, we have
\begin{eqnarray}
\,&\,&\sum_{\s\in G^c-\{1\}}[h-\s(h)]\ot(1-\c(\s))+\sum_{r\in \F_p^{\ti}}[r]\ot\psi_0(r)\label{G^c}\\
&=&[\p^{\r(c)\sharp G^c}]+\sum_{\{j_1,...,j_s\}\in\F^s_p-\{0\}}[j_1u_{\t_1}+\cdots+j_s u_{\t_s}]\ot(1-\psi_0(j_1))+\sum_{r\in \F_p^{\ti}}[r]\ot\psi_0(r)\nonumber\\
&=&[\p^{\r(c)\sharp G^c}]+\sum_{r\in\F_p^{\ti}}[\bar f_{c,\bar\c}(r u_{\t_1})]\ot(1-\psi_0(r))+\sum_{r\in \F_p^{\ti}}[r]\ot\psi_0(r)\nonumber\\
&=&[\p^{\r(c)\sharp G^c}]+\sum_{r\in\F_p^{\ti}}([\bar f_{c,\bar\c}(u_{\t_1})]+[r])\ot(1-\psi_0(r))+\sum_{r\in \F_p^{\ti}}[r]\ot\psi_0(r)\nonumber\\
&=&[\p^{\r(c)\sharp G^c}]+\sum_{r\in\F_p^{\ti}}[\bar f_{c,\bar\c}(u_{\t_1})]\ot(1-\psi_0(r))+\sum_{r\in \F_p^{\ti}}[r]\nonumber\\
&=&[\p^{\r(c)\sharp G^c}]+[-\bar f_{c,\bar\c}^p(u_{\t_1})]\in\slk.\nonumber
\end{eqnarray}
Let $\s_1=1,\s_2...,\s_{p^{n-s}}$ be a lifting of all the element of $G/G^c$ in $G$ and denote by $J$ the set $\{\s_2,...,\s_{p^{n-s}}\}$. Observe that for any $\varsigma\in J$ and $\s\in G^c$, we have
\begin{equation*}
[h-\varsigma\s(h)]=[h-\varsigma(h)+\varsigma(h-\s(h))]=[h-\varsigma(h)].
\end{equation*}
Hence
\begin{eqnarray}
\sum_{\s\in G-G^c}[h-\s(h)]\ot\c(\s)&=&\sum_{\varsigma\in J}\sum_{\s\in G^c}[h-\varsigma\s(h)]\ot\c(\varsigma\s)\label{G-G^c}\\
&=&\sum_{\varsigma\in J}\sum_{\s\in G^c}[h-\varsigma(h)]\ot\c(\varsigma)\c(\s)=0.\nonumber
\end{eqnarray}
Moreover, by the isomorphism \eqref{vo}, we have
\begin{equation}\label{dh da}
\sum_{\s\in G^c-\{1\}}[\dr \bar h]\ot(1-\c(\s))=\sharp G[\dr\bar h]=[\dr \bar a_0]\in \skl.
\end{equation}
Hence, combining \eqref{sw 1 dim}, \eqref{G^c}, \eqref{G-G^c} and \eqref{dh da}, we obtain that
\begin{eqnarray}
\sw_{\psi_0(1)}(M)&=&[\p^{\r(c)\sharp G^c}]+[-\bar f_{c,\c}^p(u_{\t_1})]+\sum_{\s\in G-G^c}[h-\s(h)]\ot1-\sharp G[\dr\bar h]\nonumber\\
&=&[\p^c]+[-\bar f^p_{c,\bar\c}(u_{\t})]+\sum_{\s\in G-G^c}[u_{\s}]-[\dr\bar a_0].\nonumber
\end{eqnarray}
\end{proof}

\begin{lemma}\label{reduce to rank 1 key lemma}
Let $M$ be a finite dimensional $C$-vector space with an irreducible linear action of $G$. Then, there exists a subgroup $H$ of $G$ satisfying $G^c\sube H$ and a 1-dimensional representation $\th$ of $H$, such that $M=\ind_H^G\th$.
\end{lemma}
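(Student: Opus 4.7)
The plan is to proceed by induction on $|G|$, exploiting the fact that the hypotheses force $G$ to be a $p$-group with $G^{c}$ a normal abelian subgroup killed by $p$. Since $L/K$ is of type (II), the ramification index is $1$ and the residue extension is purely inseparable of degree $p^{n}$, so $|G|=[L:K]=p^{n}$; moreover $G^{c}$ is normal in $G$, commutative, and killed by $p$ as recorded in \ref{ram type ii notation}. To handle the case distinctions cleanly, I would prove by induction on $|G|$ the following stronger statement: for any $p$-group $G$, any normal abelian subgroup $N$ of $G$ killed by $p$, and any irreducible $C$-representation $M$ of $G$, there exist $H$ with $N\sube H\sube G$ and a one-dimensional character $\th$ of $H$ satisfying $M=\ind_{H}^{G}\th$. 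Applying this with $N=G^{c}$ gives the lemma.

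For the inductive step, I would apply Clifford theory to the normal subgroup $N$: decompose $M|_{N}=\bp_{\c\in\O}M_{\c}$ into $\c$-isotypic components, where $\O$ is a single $G$-orbit of characters of $N$. Pick $\c_{0}\in\O$ and let $H_{0}=\mathrm{Stab}_{G}(\c_{0})$; then $N\sube H_{0}$ since $N$ is abelian, and $M=\ind_{H_{0}}^{G}M_{\c_{0}}$ with $M_{\c_{0}}$ irreducible as an $H_{0}$-representation. When $|\O|\geqslant 2$ we have $H_{0}\neq G$, and the inductive hypothesis applied to $(H_{0},N)$ gives $M_{\c_{0}}=\ind_{H}^{H_{0}}\th$ with $N\sube H$; transitivity of induction then concludes. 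When $|\O|=1$, $N$ acts on $M$ through a $G$-invariant character $\c$; let $K=\ker(\c)$, which is normal in $G$ because $\c$ is $G$-invariant. If $K\neq\{1\}$, then $K$ acts trivially on $M$ and the inductive hypothesis applied to $(G/K,N/K)$ yields the conclusion after pulling back.

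The delicate case is $K=\{1\}$, where $\c$ is faithful; then $N$ embeds in $C^{\ti}$ and, being killed by $p$, is trivial or cyclic of order $p$, while $G$-invariance of the faithful $\c$ forces $gxg^{-1}=x$ for all $g\in G$ and $x\in N$, so $N\sube Z(G)$. Using the M-group property of the $p$-group $G$ (a standard consequence of supersolvability), write $M=\ind_{H_{1}}^{G}\th_{1}$ for some $H_{1}\sube G$ and one-dimensional $\th_{1}$, and let $v\in M$ span the line corresponding to $\th_{1}\sube M|_{H_{1}}$. Since $N$ is central and acts on $M$ through the scalar character $\c$, it preserves the line $Cv$, so $N\sube\mathrm{Stab}_{G}(Cv)=:H$ and $\th_{1}$ extends to a character $\th$ of $H$. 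The canonical non-zero map $\ind_{H}^{G}\th\ra M$ coming from Frobenius reciprocity is surjective by irreducibility of $M$, and the chain $\dim_{C}M=[G:H_{1}]\geqslant[G:H]=\dim_{C}\ind_{H}^{G}\th\geqslant\dim_{C}M$ forces $H=H_{1}$, whence $N\sube H_{1}$. The main obstacle is exactly this central case: here the dimension count above is what reconciles the Clifford reduction with $p$-group monomiality.
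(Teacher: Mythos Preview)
Your argument is correct, but it takes a longer route than the paper's and overlooks a fact that makes the lemma almost immediate. The paper uses that $G^{c}$ is not merely normal abelian and killed by~$p$, but is actually \emph{central} in $G$ (this follows from Theorem~\ref{center}, since $G$ is a $p$-group and $G^{c+}=\{1\}$). With centrality in hand, the paper argues directly: by monomiality of $p$-groups write $M=\ind_{H}^{G}\th$; since $G^{c}\sube Z(G)$, the restriction $\res^{G}_{G^{c}}M$ is isotypic; but Mackey gives $\res^{G}_{G^{c}}M=\bp_{H\backslash G/G^{c}}\ind^{G^{c}}_{H\cap G^{c}}(\th|_{H\cap G^{c}})$, and each summand is $\th|_{H\cap G^{c}}$ tensored with the regular representation of $G^{c}/(H\cap G^{c})$, which is isotypic only if $G^{c}\sube H$.

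Your inductive Clifford-theoretic argument proves a more general statement (any normal elementary abelian $N$ in a $p$-group) and is self-contained, but the extra generality is not needed here. Note also that your ``delicate case'' is simpler than you present it: once $N$ is central and acts on $M$ by a scalar, it preserves every line, in particular $Cv$; since $\mathrm{Stab}_{G}(Cv)=H_{1}$ for $M=\ind_{H_{1}}^{G}\th_{1}$, you get $N\sube H_{1}$ outright, and the auxiliary subgroup $H$ and the dimension chain are unnecessary. In effect, your final case is precisely the paper's whole argument specialized to $|N|\leqslant p$.
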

\begin{proof}
Since $M$ is irreducible and $G$ is nilpotent (hence super-solvable), there exist a subgroup $H$ of $G$ and a 1-dimensional representation $\th$ of $H$, such that $M=\ind_H^G\th$ (\cite{serre gr} 8.5 Th. 16). Let $\res^G_{G^c}M=\bp_i M_i$ be the canonical decomposition of $\res^G_{G^c}M$ into isotypic $G^c$-representations (cf. \cite{serre gr} 2.6). Since $G^c$ is contained in the center of $G$, any $\s\in G$ defines an automorphism of the $G^c$-representation $\res^G_{G^c}M$. In particular, for any $i$, $\s$ induces an automorphism of $M_i$. On the other hand, since $M$ is irreducible, $G$ permutes transitively the $M_i$'s. Hence $\res^G_{G^c}M$ is isotypic. By (\cite{serre gr}  7.3 Prop. 22), we have
\begin{equation}\label{ind res}
\res_{G^c}^GM=\res_{G^c}^G\ind^G_H\th=\bp_{H\backslash G/G^c}\ind^{G^c}_{H\cap G^c}\res^H_{H\cap G^c}\th.
\end{equation}
We notice that, if $H\cap G^c\neq G^c$, since $G^c=H\cap G^c\op G^c/H\cap G^c$, $\ind^{G^c}_{H\cap G^c}\res^H_{H\cap G^c}\th$ is isomorphic to the tensor of the regular representation of $G^c/H\cap G^c$ with $\res^H_{H\cap G^c}\th$ which is not isotypic.
\end{proof}

\begin{theorem}\label{mainkey}
Assume that $p$ is not a uniformizer of $K$. Let $M$ be a finite dimensional $C$-vector space with a linear action of $G$. Then,
\begin{equation}\label{cckcc}
\cc_{\psi_0}(M)=\kcc_{\psi_0(1)}(M).
\end{equation}
\end{theorem}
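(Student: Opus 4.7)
The strategy is additive reduction to irreducible $M$, induction on $\sharp G$ to dispose of the case where $G^c$ acts trivially, and then direct comparison for the remaining case via Lemma 10.3 and the explicit formulas of Theorem 9.2 and Proposition 10.2. Both sides of \eqref{cckcc} are multiplicative on direct sums: \eqref{cc formula} is manifestly so, while $\Z$-linearity of $s_G$ in its character argument gives $\sw_{\psi_0(1)}(M \oplus N) = \sw_{\psi_0(1)}(M) + \sw_{\psi_0(1)}(N)$, whence \eqref{kcc formula} yields the analogous multiplicativity for $\kcc_{\psi_0(1)}$. Hence it suffices to treat irreducible $M$, and I argue by induction on $\sharp G$. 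If $G^c$ acts trivially on $M$, the representation factors through $G/G^c = \gal(L^{G^c}/K)$, which is again of type (II); by Proposition 3.11 on Kato's side and the evident compatibility of $\cc_{\psi_0}$ with factoring through a quotient, \eqref{cckcc} for $M$ then reduces to the same equality for $L^{G^c}/K$, which is the inductive hypothesis. The base case (trivial action) is clear, both sides being $[1]$.

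Now assume $M$ is irreducible and $G^c$ acts non-trivially. By Lemma 10.3 write $M = \ind_H^G \theta$ with $G^c \subseteq H$ and $\theta: H \to C^\times$ a character. Because $G^c$ is central in $G$ modulo the deeper logarithmic ramification (Theorem 4.3) and is contained in $H$, Mackey's formula gives
\begin{equation*}
\res^G_{G^c} M = [G:H] \cdot (\theta|_{G^c}),
\end{equation*}
so $M$ is isoclinic of slope $c$ with a single central character $\bar\chi = \theta|_{G^c}$ of multiplicity $[G:H]$. Consequently
\begin{equation*}
\cc_{\psi_0}(M) = (\rsw(\bar\chi) \otimes \pi^c)^{[G:H]},
\end{equation*}
which Theorem 9.2 makes completely explicit. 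For the Kato side, the sub-extension $L/L^H$ is again of type (II) (the deepest jump $G^c$ of $L/K$ lies inside $H$), and $h$ remains a valid residue-field generator. Proposition 10.2 applied to $\theta$ for $L/L^H$ yields $\sw_{\psi_0(1)}(\theta)$ explicitly, and the induction formula \eqref{indsw good} then produces $\sw_{\psi_0(1)}(\ind_H^G \theta)$. Extracting $\kcc_{\psi_0(1)}(M)$ via \eqref{kcc formula} and comparing term by term with $\cc_{\psi_0}(M)$ gives \eqref{cckcc}.

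The main obstacle is this final term-by-term comparison. The delicate points are: (a) checking that the additive polynomial $\bar f_{c,\bar\chi}$ and the data $\{u_\sigma : \sigma \in G^c\}$ are the same whether computed for $L/K$ or for $L/L^H$ (which is essentially because $\ker\bar\chi \subseteq G^c \subseteq H$, so the factorization used in the definition \eqref{notation ker c} is common to both); and (b) verifying that the boundary contribution $\sum_{\sigma \in G - H}([\dr\bar h] - [h - \sigma(h)])$ in \eqref{indsw good}, when combined with the partial product $\prod_{\sigma \in H - G^c}[u_\sigma]$ arising from Proposition 10.2 for $L/L^H$, recombines into the full product $\prod_{\sigma \in G - G^c}[u_\sigma]$ that features in the explicit form of $\rsw(\bar\chi)$. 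Both assertions are formal bookkeeping, but one has to track with care the factors $[\pi^c]$ and $[\dr\bar a_0]$ when passing from the Swan conductor \emph{with differential values} to the characteristic cycle, so as to land in $(\O^1_F)^{\otimes [G:H]}$ on the nose.
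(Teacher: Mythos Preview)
Your proposal is correct and follows essentially the same route as the paper: reduce to irreducible $M$, arrange that the slope equals the conductor $c$, invoke Lemma~10.3 to write $M=\ind_H^G\theta$ with $G^c\subseteq H$, compute $\sw_{\psi_0(1)}(\theta)$ for $L/L^H$ via Proposition~10.2, push it through the induction formula~\eqref{indsw good}, and compare with $\cc_{\psi_0}(M)=(\rsw(\bar\chi)\otimes\pi^c)^{[G:H]}$ via Theorem~5.9. The only cosmetic difference is your reduction step: where the paper passes in one stroke to $G/G^{c_0+}$ (with $c_0$ the slope of $M$), you instead induct on $\sharp G$ by repeatedly factoring through $G/G^c$; both achieve the same goal of ensuring $G^c$ acts non-trivially, and your bookkeeping in (a) and (b) matches the paper's computations exactly.
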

\begin{proof}
From the definitions, we may assume that $M$ is irreducible. We denote by $c_0$ the unique slope of $M$. By definitions and \ref{scdv quotient}, both sides of \eqref{cckcc} will not change if replacing $G$ by $G/G^{c_0+}$. Hence we may assume further that the unique slope of $M$ is equal to $c$. By \ref{reduce to rank 1 key lemma}, $M=\ind_H^G\th$ where $H$ is a subgroup of $G$ containing $G^c$ and $\th$ is a character of $H$. Since the slope of $M$ is $c$, the restriction of $\th$ to $G^c$ is non-trivial \eqref{ind res}. We notice that $[G:H]=\dim_CM$. Choose an $\F_p$-basis $\t_1,...,\t_s$ of $G^{c}$ such that $\bar\th(\t_1)=1\in\F_p$ and, for any $2\leqslant j\leqslant s$, $\bar\th(\t_j)=0$. Let $c'=\r(c)+\sum_{\s\in H-\{1\}}v(h-\s(h))$. Since $L/L^H$ is still of type (II), we obtain that the conductor of $L/L^H$ is $c'$, that $H^{c'}=G^c$ and, denoting by $\r'$ the Herbrand function of $L/L^H$, that $\r'(c')=\r(c)$. Using (\ref{rank 1 theorem}) for the group $H$ and the representation $\th$, we have
\begin{equation}\label{sw th}
\sw_{\psi_0(1)}(\th)=[\p^{c'}]+[-\bar f^p_{c,\bar\th}(u_{\t_1})]+\sum_{\s\in H-H^{c'}}[u_{\s}]-\sharp H[\dr \bar h].
\end{equation}
 Meanwhile, we have
\begin{equation}\label{D-D}
-\sum_{\s\in G-H} ([\dr \bar h]-[h-\s(h)])=(\sharp H-\sharp G)[\dr \bar h]+[\p^{c-c'}]+\sum_{\s\in G-H}u_{\s}.
\end{equation}

Hence, combining \eqref{sw th}, \eqref{D-D} and the induction formula for Kato's swan conductors (\ref{indsw good}), we have
\begin{eqnarray}
\sw_{\psi_0(1)}(M)&=&[G:H]\lt(\sw_{\psi_0(1)}(\th)-\sum_{\s\in G-H} ([\dr \bar h]-[h-\s(h)]\rt)\nonumber\\
&=&[G:H]\lt( [\p^c]+[-\bar f^p_{c,\bar\th}(u_{\t_1})]-[\dr\bar a_0]+\sum_{\s\in G-G^c}[u_{\s}]\rt).\nonumber
\end{eqnarray}
Hence Kato's characteristic cycle $\kcc_{\psi_0(1)}(M)$ is given by
\begin{equation*}
\kcc_{\psi_0(1)}(M)=\f{(-\dr \bar a_0)^{\ot [G:H]}}{\big(\lt(\prod_{\s\in G-G^c}u_{\s}\rt)\bar f^p_{c,\bar\th}(u_{\t_1})\big)^{[G:H]}}\in (\O^1_F)^{\ot[G:H]}.
\end{equation*}
On the other hand, $\res^G_{G^c} M=\bp_{G/H}\res^H_{G^c} \th$ \eqref{ind res}. Hence the Abbes-Saito's characteristic cycle $\cc_{\psi_0}(M)$ is given by
\begin{equation}\label{cc ultimate}
\cc_{\psi_0}(M)=\lt(\rsw(\res^H_{G^c}(\th))\ot\p^c\rt)^{[G:H]}=\f{(-\dr \bar a_0)^{\ot [G:H]}}{\big(\lt(\prod_{\s\in G-G^c}u_{\s}\rt)\bar f^p_{c,\bar\th}(u_{\t_1})\big)^{[G:H]}}\in (\O^1_F\ot_{F}\ol F)^{\ot[G:H]}.
\end{equation}
So, we have $\cc_{\psi_0}(M)=\kcc_{\psi_0(1)}(M)$.
\end{proof}

\begin{corollary}\label{hasse arf cc}
Assume that $p$ is not a uniformizer of $K$. Let $M$ be a finite dimensional $\L$-vector space with a linear action of $G$ and $r=\dim_{\L}M/M^{(0)}$. Then, we have
\begin{equation*}
\cc_{\psi_0}(M)\in (\O^1_F)^r\sube(\O^1_F\ot_F \ol F)^r
\end{equation*}
\end{corollary}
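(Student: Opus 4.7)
The plan is to derive the corollary from Theorem~\ref{mainkey} combined with Kato's Hasse--Arf theorem (Theorem~\ref{class field}), after lifting the representation from $\L$-coefficients to characteristic zero. By the product formula \eqref{cc formula}, $\cc_{\psi_0}(M)$ is multiplicative with respect to the slope and central-character decomposition of $M$, so the problem is really one of collective rationality: Proposition~\ref{thlog factor xi} already places each atomic factor $\rsw(\chi)\ot\p^r$ in $\O^1_F\ot_F\ol F$, but descending their weighted product from $\ol F$ down to $F$ cannot be read off one factor at a time.

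First I would follow Remark~\ref{ext swan 2}: let $C$ be an algebraic closure of the fraction field of $W(\L/\m_\L)$, and lift $M$ to a finite-dimensional $C$-representation $\hat M$ of $G$, using Serre's lifting theorem for modular representations of finite groups (\cite{serre gr} 16.1 Th.~33). Let $\hat\psi_0:\F_p\ra C^{\ti}$ be the character compatible with $\psi_0$ under the residue map of $C$. Since the $G$-action on both $M$ and $\hat M$ factors through the same finite quotient (indeed $G$ itself), their slope and central character decompositions coincide term by term: the central characters $\chi:\grl\ra\F_p$ that contribute to $M$ are the same as those that contribute to $\hat M$, with matching multiplicities $\dim_{\L}M^{(r)}_\chi=\dim_{C}\hat M^{(r)}_{\hat\chi}$. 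Unwinding definition \eqref{cc formula}, which depends only on these $\F_p$-valued characters, their refined Swan conductors, and these multiplicities, one obtains the key identification
\begin{equation*}
\cc_{\psi_0}(M)=\cc_{\hat\psi_0}(\hat M)\in(\O^1_F(\log)\ot_F\ol F)^{\ot r}.
\end{equation*}

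Second, Theorem~\ref{mainkey} applied to $\hat M$ gives $\cc_{\hat\psi_0}(\hat M)=\kcc_{\hat\psi_0(1)}(\hat M)$, and Theorem~\ref{class field} together with the construction of $\kcc$ in~\ref{kcc} shows that $\kcc_{\hat\psi_0(1)}(\hat M)\in(\O^1_F)^{\ot r}$. Chaining these two identifications finishes the proof. The main obstacle is the compatibility $\cc_{\psi_0}(M)=\cc_{\hat\psi_0}(\hat M)$: although it ultimately reduces to routine bookkeeping because both actions factor through the same finite group, one must carefully verify that lifting does not introduce new central characters nor alter their multiplicities, and that the identification of $\F_p$-valued characters on both sides via $\psi_0$ and $\hat\psi_0$ is consistent.
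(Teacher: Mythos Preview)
Your approach is correct and matches the paper's. The corollary is stated without proof, as immediate from Theorem~\ref{mainkey} together with the fact that $\kcc_{\psi_0(1)}(M)\in(\O^1_F)^{\ot r}$ by construction (\ref{kcc}, resting on Theorem~\ref{class field}); the lifting from $\L$-coefficients to characteristic zero that you outline is exactly the argument the paper spells out in Remark~\ref{remark general equal}, where Lemma~\ref{slope center decom p to 0} supplies the compatibility of slope and central-character decompositions.
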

It is a Hasse-Arf type result for Abbes-Saito characteristic cycle. We should mention that T.~Saito (\cite{wrcb} 3.10) and L. Xiao \cite{xiao} proved independently analogue results for smooth varieties of any dimension over perfect fields.

\begin{corollary}\label{indcc}
Assume that $p$ is not a uniformizer of $K$. Let $H$ be a sub-group of $G$, and $N$ a finite dimensional $C$-linear representation of $H$. We denote by $r$ the dimension of $N$ and by $r'$ the dimension of $N^{(0)}$. Then, we have
\begin{equation}\label{ind for cc}
\cc_{\psi_0}(\ind^G_HN)=\cc_{\psi_0}(N)^{\ot[G:H]}\ot\f{(\dr \bar a_0)^{\ot([G:H]-1)}}{\lt(\prod_{\s\in G-H}u_{\s}\rt)^{[G:H]}}\in(\O^1_F)^{\ot ([G:H]r-r')}.
\end{equation}
\end{corollary}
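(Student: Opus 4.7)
The plan is to reduce the Abbes--Saito-side identity to its Kato-side analogue via Theorem \ref{mainkey}, then derive the latter from Kato's induction formula \eqref{indsw good} for the Swan conductor with differential values.

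First I would invoke Theorem \ref{mainkey} for $\ind^G_H N$ as a $G$-representation, giving $\cc_{\psi_0}(\ind^G_H N) = \kcc_{\psi_0(1)}(\ind^G_H N)$. For the right-hand side, I would also apply \ref{mainkey} to $N$ as a representation of $H = \gal(L/L^H)$; this is legitimate because $L/L^H$ is again of type (II), by the transitivity in \ref{2type}. Since $[L:K] = p^n$, both $G$ and $H$ are $p$-groups, and a short argument shows that the wild inertia of $G_{L^H}$ surjects onto $H$: the quotient $\gal(E/E_H)$ is trivial as $E/E_H$ is purely inseparable, so $H$ coincides with the image of the inertia subgroup of $G_{L^H}$, and the tame part must die in the $p$-group $H$. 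Hence $r' = \dim N^{(0)} = \<N,1\>$, aligning the slope-zero convention of the statement with Kato's invariants.

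Next I would apply the induction formula \eqref{indsw good}
\begin{equation*}
\sw_{\psi_0(1)}(\ind^G_H N) = [G:H]\Big(\sw_{\psi_0(1)}(N)-(r-r')\sum_{\s\in G-H}([\dr\bar h]-[h-\s(h)])\Big)
\end{equation*}
and translate it into the desired identity on $\kcc$'s via the definition in \ref{kcc}. The classes $[h-\s(h)]$, reduced modulo $\p$ as in the definition of $u_\s$ in \ref{u} after normalization by the appropriate power of $\p$, supply the denominator $(\prod_{\s\in G-H} u_\s)^{[G:H]}$. The $[\dr\bar h]$-contributions recombine, via $\sharp G\cdot[\dr\bar h] = [\dr\bar a_0]$ together with the analogous identity $\sharp H\cdot[\dr\bar h] = [\dr\bar a_0^{(H)}]$ for $L/L^H$ (both instances of \eqref{vo}), into the $(\dr\bar a_0)^{\ot([G:H]-1)}$ numerator, the exponent being forced by the numerical identity $m_{\ind^G_H N} - [G:H]\,m_N = ([G:H]-1)\,r'$. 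Reading off the resulting scalar coefficient and packaging it as a characteristic cycle then yields exactly \eqref{ind for cc}.

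The main technical obstacle is the bookkeeping required to pass between $\dr\bar a_0^{(H)} \in \O^1_{E_H}$, which arises naturally in $\kcc_{\psi_0(1)}(N)$, and $\dr\bar a_0 \in \O^1_F$, which appears in the statement. This passage is governed by the transitivity of differentials for the tower $F \subset E_H \subset E$ encoded in Lemma \ref{frob}(v), together with the compatibility of Kato's differents $\di(L/K) = \di(L/L^H) + \di(L^H/K)$ from \eqref{relation diff}. While essentially mechanical, it is in this step that errors in signs, in powers of $\p$, or in splitting the product $\prod_{\s \in G-G^c}$ over cosets of $H$ in $G$ are most likely to occur, and it is therefore the part of the proof that would require the most care.
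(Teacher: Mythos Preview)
Your approach is correct and matches the paper's exactly: the paper's entire proof is the single sentence ``Indeed, \eqref{ind for cc} follows from the induction formula for Kato's swan conductor with differential values \eqref{indsw good} and \ref{mainkey},'' which is precisely your plan of applying Theorem~\ref{mainkey} on both sides and then invoking Kato's induction formula. Your elaboration on the bookkeeping---the passage between $\dr\bar a_0^{(H)}\in\O^1_{E_H}$ and $\dr\bar a_0\in\O^1_F$ via \eqref{vo} and \eqref{relation diff}, and the identification $r'=\<N,1\>$---fills in detail that the paper leaves entirely to the reader. One small caution: your numerical identity $m_{\ind^G_H N}-[G:H]\,m_N=([G:H]-1)r'$ is correct, but it does not by itself yield the exponent $[G:H]-1$ on $\dr\bar a_0$ in the formula; the match comes only after you also account for the relation $(\dr\bar h)^{\ot\sharp H}\leftrightarrow\dr\bar a_0^{(H)}$ versus $(\dr\bar h)^{\ot\sharp G}\leftrightarrow\dr\bar a_0$ from \eqref{vo}, so that step is indeed where the care you flag is needed.
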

Indeed, \eqref{ind for cc} follows from the induction formula for Kato's swan conductor with differential values \eqref{indsw good} and \ref{mainkey}.

\begin{remark}\label{remark general equal}
Assume that $p$ is not a uniformizer of $K$. Let $L'$ be a finite Galois extension of $K$  of group $G'$ which contains a sub-extension $K'$ of $K$ such that $K'/K$ is unramified and $L'/K'$ is of type (II). We denote by $P'$ the Galois group of the extension $L'/K'$ and by $F'$ the residue field of $\cO_{K'}$. Let $\L$ be a finite field of characteristic $\ell\neq p$ which contains a primitive $(\sharp P')$-th root of unity and let $N$ be a $\L$-vector space of finite dimension with a linear-$G'$ action. We fix a non-trivial character $\psi:\F_p\ra \L^{\ti}$. By \ref{ext swan 1} and \ref{ext swan 2}, we can still define $\kcc_{\psi(1)}(N)\in(\O^1_F)^{\ot r}$, where $r=\dim_{\L}N/N^{(0)}$. On the other hand, the wild inertia subgroup $P$ of $G_K$ acts on $N$ through $P'$, we can define $\cc_{\psi}(N)$ (\ref{cc}). By (\cite{saito cc} 1.22) and (\cite{as iii} 3.1), we have
\begin{equation}\label{1}
\cc_{\psi}(\res^{G'}_{P'}N)=\cc_{\psi}(N)\in(\O^1_{F}(\log)\ot_{F}\ol F)^{\ot r}
\end{equation}
 through the canonical isomorphism $\O^1_F(\log)\ot_F F'\isora\O^1_{F'}(\log)$.
Moreover, let $\L'$ be the algebraic closure of the fraction field of the ring of Witt vectors $W(\L)$, $N'$ a pre-image of the class of $\res^{G'}_{P'}N$ in the Grothendieck ring $R_{\L'}(P')$ (\cite{serre gr} 16.1 Th. 33) and $\psi':\F_p\ra \L'^{\ti}$ the unique lifting of $\psi:\F_p\ra \L^{\ti}$. By \ref{slope center decom p to 0}, we deduce that
\begin{equation}\label{2}
\cc_{\psi'}(N')=\cc_{\psi}(\res^{G'}_{P'}N).
\end{equation}
From \ref{mainkey}, we have
\begin{equation}\label{3}
\cc_{\psi'}(N')=\kcc_{\psi(1)}(N).
\end{equation}
By \eqref{1}, \eqref{2} and \eqref{3}, we conclude that
\begin{equation}\label{general equal}
\cc_{\psi}(N)=\kcc_{\psi(1)}(N)\in(\O^1_F)^{\ot r}.
\end{equation}
\end{remark}

\section{Nearby cycles of $\ell$-sheaves on relative curves}
\subsection{}
In this section, we denote by $S=\spec(R)$ an excellent strictly henselian trait. Assume that the residue field of $R$ has characteristic $p$ and that $p$ is not a uniformizer of $R$. We denote by $s$ (resp. $\eta$, resp. $\bar\eta$) the closed point (resp. generic point, a geometric generic point) of $S$. A finite covering of $(S,\eta,s)$ stands for a trait $(S',\eta',s')$ equipped with a finite morphism $S'\ra S$. Let $\L$ be a finite field of characteristic $\ell\neq p$ and fix a non-trivial character $\psi_0:\F_p\ra \L^{\ti}$.

\subsection{}
We define a category $\sC_S$ as follows. An object of $\sC_S$ is a normal affine $S$-scheme $H$ for which there exist an $S$-scheme of finite type and a closed point $x$ of $X_s$, such that $X-\{x\}$ is smooth over $S$ and $H$ is $S$-isomorphic to the henselization of $X$ at $x$. A morphism between two objects of $\sC_S$ is a generically \'etale finite morphism of $S$-schemes. Let $(S',\eta',s')$ be a finite covering of $(S,\eta,s)$. Then for any object $H$ of $\sC_S$, $H\ti_SS'$ is an object of $\sC_{S'}$ (\cite{kato vc} 5.4).

\subsection{}
Let $H$ be an object of $\sC_S$. We denote by $P(H)$ the set of height 1 points of $H$, by
\begin{equation*}
P_s(H)=P(H)\cap H_s,\ \ \ P_{\eta}(H)=P(H)\cap H_{\eta}.
\end{equation*}
We have (\cite{kato vc} 5.2, \cite{as ft} A.6):
\begin{itemize}
\item[(i)]
$H_{\eta}$ is geometrically regular over $\eta$ and for any $\fp\in P_{\eta}(H)$, the residue field $\kappa(\fp)$ of $H$ at $\fp$ is a finite extension of the fraction field $K(S)$ of $S$.
\item[(ii)]
$H_s$ is a reduced henselian noetherian local scheme over $s$ of dimension 1, hence $P_s(H)$ is a finite set.
\end{itemize}
We denote by $\widetilde{H}_s$ the normalization of $H_s$, which is a finite union of strictly henselian traits. We put
\begin{equation*}
\d(H)=\dim_k(\sO_{\widetilde{H}_s}/\sO_{H_s}).
\end{equation*}

\subsection{}(\cite{as ft} A.7, A.8).\label{triple}
Let $H$ be an object of $\sC_S$, $U$ a non-empty open sub-scheme of $H_{\eta}$ and $\sF$ a locally constant constructible \'etale sheaf of ${\L}$-modules over $U$. For a triple $(H,U,\sF)$ and a finite covering $(S',\eta',s')$ of $(S,\eta,s)$, we denote by $(H,U,\sF)_{S'}$ the triple $(H',U',\sF')$ where $H'=H\ot_SS'$, $U'$ is the inverse image of $U$ in $H'$ and $\sF'$ is the inverse image of $\sF$ on $U'$.
We call the triple $(H,U,\sF)$ {\it stable} if there is an \'etale connected Galois covering $\widetilde U$ of $U$ such that
\begin{itemize}
\item[(i)]
The pull-back of $\sF$ to $\widetilde U$ is constant.
\item[(ii)]
The normalization $\widetilde H$ of $H$ in $\widetilde U$ belongs to $\sC_S$ and the residue field of $\widetilde H$ at all points of $\widetilde H_{\eta}-\widetilde U_{\eta}$ are finite separable extensions of $\k(\eta)$.
\end{itemize}

\begin{proposition}[\cite{kato vc} 6.3]\label{be stable}
Let $(H,U,\sF)$ be a triple as \ref{triple}.
\begin{itemize}
\item[(i)]
If $(H,U,\sF)$ is stable, $(H,U,\sF)_{S'}$ is stable for any finite covering $S'$ of $S$.
\item[(ii)]
For any triple $(H,U,\sF)$, there exist a finite covering $(S',\eta',s')$ of $(S,\eta,s)$ such that $(H,U,\sF)_{S'}$ is stable.
\end{itemize}
\end{proposition}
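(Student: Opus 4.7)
For part (i), let $\wt U\to U$ be a connected Galois \'etale cover trivializing $\sF$, furnished by the stability hypothesis, and let $\wt H$ be the corresponding normalization in $\sC_S$. My plan is to base change the entire picture. Set $H'=H\ti_SS'$, $U'=U\ti_SS'$, and denote by $\sF'$ the pull-back of $\sF$. The scheme $\wt U\ti_SS'$ is \'etale over $U'$ and the pull-back of $\sF'$ to it is constant. I would choose any connected component $\wt U'$ of $\wt U\ti_SS'$: its stabilizer inside $\gal(\wt U/U)\ti\gal(\eta'/\eta)$ acts transitively on it, exhibiting $\wt U'\to U'$ as a connected Galois \'etale cover trivializing $\sF'$. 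Taking $\wt H'$ to be the normalization of $H'$ in $\wt U'$, one checks that $\wt H'\in\sC_{S'}$: since $\wt H$ is the henselization at one closed point $\wt x$ of a finite-type $S$-scheme $\wt X$ smooth outside $\wt x$, the base change $\wt X\ti_SS'$ has the analogous structure over $S'$, and $\wt H'$ arises as a connected component of the normalization of $\wt H\ti_SS'$ localized at the preimage of $\wt x$. The residue fields of $\wt H'$ at points of $\wt H'_{\eta'}-\wt U'_{\eta'}$ are finite factors of the $\kappa(\eta')$-algebras obtained by tensoring the boundary residue fields of $\wt H$ (finite separable over $\kappa(\eta)$ by hypothesis) with $\kappa(\eta')/\kappa(\eta)$, hence again finite separable over $\kappa(\eta')$.

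For part (ii), the essential input is Epp's partial semi-stable reduction theorem. First, I would choose any connected Galois \'etale cover $\wt U\to U$ trivializing $\sF$, which exists because $\sF$ is locally constant constructible on the regular scheme $U$. Let $\wt H$ be the normalization of $H$ in $\wt U$. Two obstructions prevent $(H,U,\sF)$ from being stable: the special fiber $\wt H_s$ may be non-reduced, in which case $\wt H\notin\sC_S$; and the residue fields at the finitely many boundary points of $\wt H_\eta-\wt U_\eta$ may not be finite separable over $\kappa(\eta)$. I would apply Epp's theorem at each $\fp\in P_s(\wt H)$: the completed local ring $\what\cO_{\wt H,\fp}$ is a finite extension of the complete DVR $\what R$, and Epp supplies a finite extension of $R$ after which the integral closure acquires reduced special fiber at $\fp$. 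Since $P_s(\wt H)$ is finite, one can choose a single finite covering $S'_1\to S$ that works simultaneously for every $\fp$, and after base change to $S'_1$, normalizing in a chosen connected component of $\wt U\ti_SS'_1$ produces an object of $\sC_{S'_1}$.

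To handle the separability condition on boundary residue fields, after enlarging $S'_1$ by a further finite covering $S'\to S'_1$ chosen to realize the finitely many residue field extensions at the boundary points as separable extensions of $\kappa(\eta')$ (e.g.\ by absorbing any inseparability into $\kappa(\eta')$ itself), the resulting triple $(H,U,\sF)_{S'}$ satisfies both defining conditions for stability. The main obstacle of the proof is the uniform invocation of Epp's theorem across all height-one points of $\wt H_s$; this is the step where excellence of $R$ is crucially used, and is the technical heart of the argument. The bookkeeping of connected components and Galois structures under base change in both parts is routine but must be performed with some care, especially because normalization does not commute with base change in general; in part (i) this is circumvented by working with a preferred connected component, while in part (ii) Epp's theorem itself handles the defect.
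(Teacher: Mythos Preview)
Your proposal is correct and follows the same route as the paper, which gives no argument beyond recording that (i) follows from \cite{kato vc}~5.4 (the stability of $\sC_S$ under base change $-\times_SS'$) and (ii) from Epp's theorem \cite{epp}. You have essentially unpacked these two citations into a workable sketch; the only minor slip is that $\eta'/\eta$ need not be Galois, so in part (i) one should either pass to the Galois closure of $S'/S$ first or argue directly with connected components without invoking $\gal(\eta'/\eta)$.
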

Proposition (i) follows form (\cite{kato vc} 5.4) and proposition (ii) follows form \cite{epp}.

\subsection{}
Let $(H,U,\sF)$ be a stable triple. For $\fp\in P(H)$, we denote by $\what{\cO}_{H,\fp}$ the completion of the local ring of $H$ at $\fp$ and by $\kappa(\fp)$ its residue field. For $\fp\in P_s(H)$, we denote by $\widetilde{H}_{s,\fp}$ the integral closure of $H_s$ in $\kappa(\fp)$, which is a strictly henselian trait. Let $\ord_{s,\fp}$ be the valuation of $\kappa(\fp)$ associated to $\widetilde{H}_{s,\fp}$ normalized by $\ord_{s,\fp}(\kappa(\fp)^{\ti})=\Z$. We denote also by $\ord_{s,\fp}:\O^1_{\kappa(\fp)}-\{0\}\ra \Z$ the valuation defined by $\ord_{s,\fp}(\a\dr\b)=\ord_{s,\fp}(\a)$, if $\a,\b\in\kappa(\fp)^{\ti}$ and $\ord_{s,\fp}(\b)=1$. It can be canonically extended, for any integer $r>0$, to $(\O^1_{\kappa(\fp)})^{\ot r}-\{0\}$. Following (\cite{sga7i} XVI, \cite{lau} and \cite{kato vc} 6.4), we call {\it total dimension} of $\sF$ at a point $\fp\in P(H)$, and denote by $\dimtot_{\fp}(\sF)$ the integer defined as follows:
\begin{itemize}
\item[(i)]
For $\fp\in P_{\eta}(H)$, we put
\begin{equation*}
\dimtot_{\fp}(\sF)=[\kappa(\fp):\k(\eta)](\sw_{\fp}(\sF)+\rank(\sF)),
\end{equation*}
where $\sw_{\fp}(\sF)$ is the Swan conductor of the pull-back of $\sF$ over $\spec(\what{\cO}_{H,\fp})\ti_HU$.
\item[(ii)]
For $\fp\in P_s(H)$, we denote by $K_{\fp}$ the fraction field of $\what {\cO}_{H,\fp}$. Since the triple $(H,U,\sF)$ is stable, there exists a finite Galois extension $L_{\fp}$ of $K_{\fp}$ of ramification index one, such that the representation $\sF_{\fp}$ of $\gal(K^{\mathrm{sep}}_{\fp}/K_{\fp})$ defined by $\sF$ factors through the quotient $\gal(L_{\fp}/K_{\fp})$. Notice that $L_{\fp}/K_{\fp}$ factors through a field $K'_{\fp}$ such that $K'_{\fp}/K_{\fp}$ is unramified and $L_{\fp}/K'_{\fp}$ is of type (II) (\ref{type}). Fixing a uniformizer $\p$ of $R$ (also a uniformizer of $K_{\fp}$), we have $\cc_{\psi_0}(\sF_{\fp})\in (\O^1_{\k(\fp)})^m$ (cf. \ref{remark general equal}).
We denote by $\ol{\sF}_{\fp}$ the restriction to $\spec(\kappa(\fp))$ of the direct image of $\sF$ under $\spec (K_{\fp})\ra \spec(\what {\cO}_{H,\fp})$ and by $\dimtot_{s,{\fp}}(\ol{\sF}_{\fp})$ the sum of $\rank(\ol{\sF}_{\fp})$ and the Swan conductor of $\ol{\sF}_{\fp}$ over $\spec(\kappa(\fp))$. We put
\begin{equation}\label{dimtot s}
\dimtot_{\fp}(\sF)=-\ord_{s,\fp}(\cc_{\psi_0}(\sF_{\fp}))+\dimtot_{s,{\fp}}(\ol{\sF}_{\fp}).
\end{equation}
We notice that $\ord_{s,\fp}(\cc_{\psi_0}(\sF))$ dose not depend on the choice of $\psi_0$ \eqref{cc ultimate} and the choice of $\p$.
\end{itemize}
We put
\begin{eqnarray}
\varphi_{\eta}(H,U,\sF)&=&\sum_{\fp\in H_{\eta}-U}\dimtot_{\fp}(\sF),\\
\varphi_{s}(H,U,\sF)&=&\sum_{\fp\in P_s(H)}\dimtot_{\fp}(\sF)\label{psi s as}.
\end{eqnarray}

\begin{lemma}[\cite{kato vc} 6.5]\label{invar dimtot}
Let $(H,U,\sF)$ be a stable triple (\ref{triple}), $(S',\eta', s')$ a finite covering of $(S,\eta,s)$. We put $(H',U',\sF')=(H,U,\sF)_{S'}$.
\begin{itemize}
\item[(i)]
For any $\fp\in P_s(H)$ and for the unique $\fp'\in P_s(H)$ above $\fp$, we have \begin{equation*}
\dimtot_{\fp}(\sF)=\dimtot_{\fp'}(\sF').
\end{equation*}
\item[(ii)]
For any $\fp\in H_{\eta}-U$, we have
\begin{equation*}
\dimtot_{\fp}(\sF)=\sum_{\fp'}\dimtot_{\fp'}(\sF'),
\end{equation*}
where $\fp'$ runs over the points above $\fp$.
\end{itemize}
\end{lemma}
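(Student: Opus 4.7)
The plan is to treat the two parts separately. Part (ii) is handled directly by invariance of the Swan conductor under unramified base change; part (i) is reduced to Kato's original invariance (\cite{kato vc} 6.5) via the main equivalence theorem \ref{mainkey} (in the form of \ref{remark general equal}) between the Abbes--Saito and Kato characteristic cycles.

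For (ii), let $\fp\in H_{\eta}-U$. Since $\p$ is invertible in $\cO_{H,\fp}$, the base change is purely in the generic direction: $\what\cO_{H,\fp}\ot_R R'=\what\cO_{H,\fp}\ot_K K'$. Writing $\what\cO_{H,\fp}\iso\kappa(\fp)[[t]]$ for a local equation $t$ of $\fp$, the right-hand side decomposes as $\prod_{\fp'}\kappa(\fp')[[t]]$, where the $\kappa(\fp')$ are the field factors of the finite \'etale $K'$-algebra $\kappa(\fp)\ot_K K'$. Each $K_{\fp'}/K_{\fp}$ has ramification index one with separable residue extension $\kappa(\fp')/\kappa(\fp)$, so the wild inertia and its ramification filtration are preserved; in particular $\sw_{\fp'}(\sF')=\sw_{\fp}(\sF)$ and $\rank(\sF')=\rank(\sF)$. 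Multiplying by $[\kappa(\fp'):K']$, summing, and using the degree identity $\sum_{\fp'}[\kappa(\fp'):K']=[\kappa(\fp):K]$ yields (ii).

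For (i), let $\fp\in P_s(H)$ and let $\fp'$ be the unique point above $\fp$ (uniqueness follows from $\kappa(s)=\kappa(s')$ for strictly henselian traits). One has $\kappa(\fp')=\kappa(\fp)$, and $\p$, $\p'$ are uniformizers of $\what\cO_{H,\fp}$ and $\what\cO_{H',\fp'}$ respectively, so $K_{\fp'}/K_{\fp}$ is totally ramified of degree $e=[R':R]$ and $\sF_{\fp'}=\sF_\fp\vert_{G_{K_{\fp'}}}$. Here neither of the two contributions in \eqref{dimtot s} is individually invariant: the rank of $\ol\sF_{\fp'}=\sF_\fp^{I_{K_{\fp'}}}$ typically grows as $I_{K_{\fp'}}\subsetneq I_{K_{\fp}}$, while the refined Swan conductor transforms through the Herbrand function of $K_{\fp'}/K_\fp$ in a way that exactly compensates the variation of the valuation $\ord_{s,\fp}$. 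The invariance of the combined quantity with Kato's characteristic cycle $\kcc_{\psi_0(1)}$ in place of $\cc_{\psi_0}$ is Kato's (\cite{kato vc} 6.5).

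To transfer Kato's invariance to the Abbes--Saito formulation, I apply the main theorem \ref{mainkey} together with its extension \ref{remark general equal}, which covers precisely the setting here: $\sF_\fp$ factors through $\gal(L_\fp/K_\fp)$ with $L_\fp$ of type (II) over an unramified extension $K'_\fp/K_\fp$. This yields $\cc_{\psi_0}(\sF_\fp)=\kcc_{\psi_0(1)}(\sF_\fp)$ in $(\O^1_{\kappa(\fp)})^{\ot m}$, and likewise at $\fp'$, identifying the Abbes--Saito definition \eqref{dimtot s} with Kato's; the conclusion then follows from (\cite{kato vc} 6.5). The substantive step is the identification of the two characteristic cycles, which is the main result of this article; once that is available, the lemma is formal.
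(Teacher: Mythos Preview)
Your proposal is correct and matches the paper's approach: the paper gives no independent proof of this lemma but simply cites \cite{kato vc} 6.5, the identification of the present definition of $\dimtot_\fp$ with Kato's being justified (in the proof of Theorem~\ref{deligne kato}) by exactly the equality $\cc_{\psi_0}=\kcc_{\psi_0(1)}$ of \ref{mainkey} and \ref{remark general equal} that you invoke. Your direct treatment of part~(ii) and your explicit reduction of part~(i) to Kato via \eqref{general equal} make this logic explicit, but the underlying argument is the same.
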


\subsection{}
Let $(H,U,\sF)$ be a triple (\ref{triple}). By \ref{be stable}, there exists a finite covering $(S',\eta',s')$ of $(S,\eta,s)$ such that $(H,U,\sF)_{S'}$ is stable. We put
\begin{eqnarray*}
\varphi_{\eta}(H,U,\sF)&=&\varphi_{\eta'}((H,U,\sF)_{S'}),\\
\varphi_{s}(H,U,\sF)&=&\varphi_{s'}((H,U,\sF)_{S'}).
\end{eqnarray*}
By \ref{invar dimtot}, they don't depend on the choice of the covering $(S',\eta',s')$.

\begin{theorem}[Deligne, Kato]\label{deligne kato}
Let $(H,U,\sF)$ be a triple (\ref{triple}), x the closed point of $H$, $u:U\ra H_{\eta}$ the canonical open immersion. Then we have
\begin{equation}\label{d k formula}
\dim_{\L}(\Psi^0_x(u_!\sF))-\dim_{\L}(\Psi^1_x(u_!\sF))=\varphi_{s}(H,U,\sF)-\varphi_{\eta}(H,U,\sF)-2\d(H)\rank(\sF).
\end{equation}
\end{theorem}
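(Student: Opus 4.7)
The plan is to reduce to the case where the triple $(H,U,\sF)$ is stable and then invoke Kato's version of the formula, using the main comparison theorem of Section 10 to translate between the two definitions of $\varphi_s$.

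By \ref{be stable}(ii) we choose a finite covering $(S',\eta',s')$ of $(S,\eta,s)$ such that $(H,U,\sF)_{S'}$ is stable. The right-hand side of \eqref{d k formula} is preserved under this base change: \ref{invar dimtot} gives the invariance of $\varphi_\eta$ and $\varphi_s$, while $\rank(\sF)$ is manifestly unchanged and $\d(H)$ transforms trivially because the residue field extension $s'/s$ is purely inseparable ($s$ being separably closed) and scalar extension by a purely inseparable field extension preserves the dimension of a finite-dimensional vector space. The left-hand side is even more directly invariant: with a compatible choice of geometric generic point we have $\bar\eta' = \bar\eta$, which identifies $H'_{\bar\eta'}$ with $H_{\bar\eta}$ and the sheaf $u'_!\sF'$ with $u_!\sF$, so $\Psi^i_{x'}(u'_!\sF') \iso \Psi^i_x(u_!\sF)$. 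We may therefore assume that $(H,U,\sF)$ is itself stable.

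In the stable case, we claim that our invariant $\varphi_s(H,U,\sF)$ coincides with the invariant defined by Kato in (\cite{kato scdv} 4.4). Both are sums over $\fp \in P_s(H)$ of local terms of the shape $-\ord_{s,\fp}(\text{cycle}) + \dimtot_{s,\fp}(\ol{\sF}_\fp)$; the only difference is that the local term \eqref{dimtot s} in our definition uses Abbes--Saito's $\cc_{\psi_0}(\sF_\fp)$ whereas Kato's uses $\kcc_{\psi_0(1)}(\sF_\fp)$. Stability ensures that the local extension $L_\fp/K_\fp$ at each $\fp \in P_s(H)$ sits over an unramified extension $K'_\fp/K_\fp$ with $L_\fp/K'_\fp$ of type (II), so \ref{remark general equal} yields
$$\cc_{\psi_0}(\sF_\fp) = \kcc_{\psi_0(1)}(\sF_\fp) \in (\O^1_{\kappa(\fp)})^{\ot m},$$
and the two local contributions agree. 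Meanwhile $\varphi_\eta$ coincides with its Kato counterpart tautologically, both being defined in terms of the usual Swan conductors at points of $H_\eta - U$.

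Kato's theorem (\cite{kato vc}, \cite{kato scdv}) then asserts precisely the equality \eqref{d k formula} with Kato's $\varphi_s$ in place of ours, completing the proof. The substantive work has already been carried out in Section 10, where the comparison $\cc_{\psi_0}=\kcc_{\psi_0(1)}$ was established; the present theorem is essentially a repackaging of Kato's formula in the language of Abbes--Saito's ramification filtration. The only real subtlety lies in the invariance of both sides of \eqref{d k formula} under the finite base change needed to reach a stable triple.
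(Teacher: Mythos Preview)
Your proposal is correct and follows essentially the same route as the paper: both reduce to the stable case and invoke the comparison \eqref{general equal} (i.e.\ \ref{remark general equal}) to identify $\dimtot_{\fp}(\sF)$ with Kato's invariant, then cite Kato's theorem. The paper's proof is terser because the base-change reduction is already absorbed into the definition of $\varphi_s$ and $\varphi_\eta$ for non-stable triples (see the paragraph just before the theorem), so it simply states the comparison for stable triples; your explicit discussion of the invariance of both sides under the finite covering $S'\to S$ is a reasonable elaboration of what the paper leaves implicit.
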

\begin{proof}
Indeed, for a stable triple $(H,U,\sF)$ and any $\fp\in P_s(H)$, $\dimtot_{\fp}(\sF)$ is the same as Kato's definition in (\cite{kato scdv} 4.4) by \eqref{general equal}.
\end{proof}

\begin{remark}
The theorem \ref{deligne kato} is proved by Deligne if $\sF$ is unramified at every point of $P_s(H)$ (\cite{lau} 5.1.1). In the general case, Kato proved the theorem with two different definitions of the invariant $\varphi_s(H,U,\sF)$ (\cite{kato vc} 6.7,  \cite{kato scdv} 4.5). T. Saito give another proof with another definition of $\varphi_s(H,U,\sF)$ (\cite{saito tf}) which corresponds to the latter definition of Kato (\cite{kato scdv} 4.5). If $\sF$ is of rank 1, Abbes and Saito gave a definition of $\varphi_s(H,U,\sF)$ (\cite{as ft} A.10) using the refined Swan conductor in their ramification theory \cite{as aml}, which coincides with Kato's latter definition (\cite{kato ch1} remark after 6.8). Here, using Abbes and Saito's ramification theory, we give the definition of $\varphi_s(H,U,\sF)$ for any rank sheaf $\sF$ which is equal to Kato's latter formula (\ref{mainkey}).
\end{remark}


\begin{thebibliography}{20}

\bibitem[AM]{am}
A. Abbes, A. Mokrane,
\emph{Sous-groupes canoniques et cycles \'evanescents $p$-adiques pour les vari\'et\'es ab\'eliennes}
Publ. Math. IHES, \textbf{99} (2004), 117-162.

\bibitem[AS1]{as i}
A. Abbes and T. Saito,
\emph{Ramification of local fields with imperfect residue fields}.
American J. of Math. \textbf{124} (2002), 879-920.

\bibitem[AS2]{as ii}
A. Abbes and T. Saito,
\emph{Ramification of local fields with imperfect residue fields II}.
Documenta Math. Extra Volume Kato (2003), 5-72.

\bibitem[AS3]{as aml}
A. Abbes and T. Saito,
\emph{Analyse micro-locale $\ell$-adique en caract\'eristique $p>0$ le cas d'un trait}.
Publ. RIMS, Kyoto Univ. \textbf{25} (2009), 25-74.

\bibitem[AS4]{as ft}
A. Abbes and T. Saito,
\emph{Local fourier transform and epsilon factors}.
Compositio Math. \textbf{146} (2010), 1507-1551.

\bibitem[AS5]{as rc}
A. Abbes and T. Saito,
\emph{Ramification and cleanliness}.
Tohuku Math. J. Centennial Issue, \textbf{63} No. 4 (2011), 775-853.

\bibitem[BGR]{bgr}
S. Bosch, U. G\"untzer and R. Remmert,
\emph{Non-archimedean analysis}.
A Series of Comprehensive Studies in Mathematics \textbf{261}, Springer-Verlag (1984).

\bibitem[SGA7II]{sga7ii}
P. Deligne and N. Katz,
\emph{Groupes de monodromie en g\'eom\'etrie alg\'ebriques}.
SGA 7 II, LNM \textbf{340}, springer-verlag (1973).

\bibitem[Epp]{epp}
H. Epp,
\emph{Eliminating wild ramification}.
Invent. Math. \textbf{19} (1973), 235-249.

\bibitem[Fu]{fu}
L. Fu,
\emph{Etale cohomology theory}.
Nankai Tracts in Mathematics, Vol. \textbf{13}, world scientific (2011).


\bibitem[EGA IV]{ega4}
A. Grothendieck, J.A. Dieudonn\'e,
\emph{\'El\'ements de G\'eometrie Alg\'ebrique,  IV \'Etude locale des sch\'emas et des morphisms de sh\'emas}.
Publ. Math. IHES, \textbf{20} (1961), \textbf{24} (1965), \textbf{28} (1966), \textbf{32} (1967).

\bibitem[SGA7I]{sga7i}
A. Grothendieck {\it et al.}
\emph{Groupes de monodromie en g\'eom\'etrie alg\'ebriques}.
SGA 7 I, LNM \textbf{288}, springer-verlag (1970).




\bibitem[deJ]{dejong}
A.J. de Jong,
\emph{Crystalline Dieudonn\'e module theory via formal and rigid geometry}.
Publ. Math. IHES, \textbf{82} (1995), 5-96.

\bibitem[Kato1]{kato vc}
K. Kato,
\emph{Vanisihing cycles, ramification of valuations, and class field theory}.
Duke Math. J. Vol.\textbf{55} No.3, (1987), 629-659.

\bibitem[Kato2]{kato scdv}
K. Kato,
\emph{Swan conductors with differential values}.
Advanced Studies in Pure Math. \textbf{12} (1987), 315-342.

\bibitem[Kato3]{kato ch1}
K. Kato,
\emph{Swan conductors for characters of degree one in the imperfect residue field case}.
Contemporary Mathematics, Volume \textbf{83} (1989), 101-131.

\bibitem[Katz]{katz}
N. Katz,
\emph{Gauss sum, Kloosterman sums, and monodromy groups}.
Ann. of Math. Stud. \textbf{116}, Princeton University Press (1988).

\bibitem[Lau1]{lau}
G. Laumon,
\emph{Semi-continuit\'e du conducteur de Swan (d'apr\`es P. Deligne)}.
In the Euler-Poincar\'e characteristic, Ast\'erisque, \textbf{82-83} (1981), 173-219.

\bibitem[Lau2]{lautf}
G. Laumon,
\emph{Transformation de Fourier, constantes d'\'equations fonctionnelles et conjecture de Weil}.
Publ. Math. de l'IHES, tome \textbf{65}, (1987), 131-210.

\bibitem[OW]{ow}
A. Obus, S. Wewers,
\emph{Cyclic extensions and the local lifting problem}.
arXiv:1203.5057, (2012).

\bibitem[Sa1]{saito tf}
T. Saito,
\emph{Trace formula for vanishing cycles of curves},
Math. Ann. \textbf{276} (1987), 311-315.

\bibitem[Sa2]{saito cc}
T. Saito,
\emph{Wild ramification and the characteristic cycle of an $\ell$-adic sheaf}.
J. Inst. Math. Jussieu \textbf{8} (2009), 769-829.

\bibitem[Sa3]{as iii}
T. Saito,
\emph{Ramification of local fields with imperfect residue fields III}.
Math. Ann. Volume \textbf{352}, Issue 3, (2012), 567-580.

\bibitem[Sa4]{wrcb}
T. Saito,
\emph{Wild Ramification and the cotangent bundle}.
arXiv:1301.4632v4, (2013).

\bibitem[Se1]{serre cl}
J.P. Serre,
\emph{ Corps locaux}.
Deuxieme edition, Hermann, (1968).


\bibitem[Se2]{serre gr}
J.P. Serre,
\emph{Linear representations of finite groups}.
Graduate Texts of Mathematics \textbf{42}, Springer-Verlag (1977).

\bibitem[Xiao]{xiao}
L. Xiao,
\emph{On ramification filtrations and $p$-adic differential equations, I: equal characteristic case}.
Algebra and Number Theory, \textbf{4-8} (2010), 969-1027.

\end{thebibliography}
\end{document}